\newtheorem{theorem}{Theorem}[section]
\newtheorem{definition}[theorem]{Definition}
\newtheorem{lemma}[theorem]{Lemma}
\newtheorem{corollary}[theorem]{Corollary}
\newtheorem{proposition}[theorem]{Proposition}
\newtheorem{remark}[theorem]{Remark}
\newcommand{\be}{\begin{equation}}
\newcommand{\ee}{\end{equation}}
\newcommand{\bea}{\begin{eqnarray}}
\newcommand{\eea}{\end{eqnarray}}
\newcommand{\ben}{\begin{eqnarray*}}
	\newcommand{\een}{\end{eqnarray*}}
\newcommand{\bt}{\begin{split}}
	\newcommand{\et}{\end{split}}
\newcommand{\bet}{\begin{equation}}
\begin{document}
\title[Mixed Hessian inequalities]
{Mixed Hessian inequalities on Hermitian manifolds and applications}

\author[H. Sun]{Haoyuan Sun}
\address{Haoyuan Sun: School of Mathematical Sciences\\ Beijing Normal University\\ Beijing 100875\\ P. R. China}
\email{202531130037@mail.bnu.edu.cn}

\begin{abstract}
     Let $(X,\omega)$ be a compact Hermitian manifold of complex dimension $n$. In this paper we establish a Ko\l odziej-Nguyen type weak convergence theorem of complex Hessian operators. Utilizing this result, we prove a general mixed Hessian inequality with respect to a background Hermitian metric, covering both local and global case. As an application, we prove the existence of bounded solutions of complex Hessian equations where the right-hand side measure is well dominated by capacities.
\end{abstract}

\subjclass[2010]{32W20, 32U05, 32U40, 53C55}
\keywords{weak continuity of complex Hessian operator, convergence in capacity, mixed type inequality, complex Hessian equation}

\maketitle

\tableofcontents

\section{Introduction}
Since Yau's landmark resolution of the Calabi conjecture, the complex Monge-Amp\`ere equation has become a central topic at the intersection of partial differential equations and complex geometry. A vast body of profound results has since been established; see, for instance, \cite{Yau78,  BT82, Kol98, TW10, EGZ09, BEGZ10, Ngu16, GL23, LWZ24} and references therein. 

Let $(X,\omega)$ be a compact Hermitian manifold of dimension $n$, and fix an integer $1\leq m<n$.  We are interested in the complex Hessian equation, a natural generalization of the complex Monge-Ampère equation, which takes the form:
$$
(\omega+dd^cu)^m\wedge\omega^{n-m}=fdV_X.
$$
This equation has been  a subject of extensive research over the past two decades. A primary motivation for its study stems from the well-developed theory of its real counterpart, see for example \cite{CNS85,CW01,TW97,TW99,TW02,Wang09}. Furthermore, various Hessian-type nonlinear equations, such as the J-equation and the deformed Hermitian Yang-Mills equation, have yielded profound applications in differential geometry and physics, such as \cite{CJY20, DP21, Song20, Chen21, Chen22}.

In \cite{Błocki05}, B\l ocki observed that the methods of pluripotential theory developed by Bedford and Taylor \cite{BT82} can be applied to define the $m$-Hessian measure of bounded $m$-subharmonic functions. This was later generalized to compact K\"ahler manifolds by \cite{Lu13}, \cite{Lu15} and subsequently to Hermitian manifolds by \cite{GN18},\cite{KN25b}. Smooth solutions of Complex Hessian equations on compact K\"ahler manifolds were obtained by \cite{DK17} and on Hermitian manifolds by \cite{Zhang15} and \cite{Szé18}. Correspondingly, theory of weak solutions has also been successfully developed by \cite{Lu15, KN16, LeV23, Sun24, GL25, CX25, KN25b, Fang25b, PSWZ25}, to name a few. In these proofs, the mixed Hessian inequalities appeared to be particularly useful in controlling the twist constants that appear on the right-hand side of the equation. The uniqueness of bounded solutions also relies heavily on this inequality, as noted in \cite[Proposition 3.16]{KN16}. 

Mixed Hessian inequalities with respect to a background K\"ahler metric have been established in \cite{DL15} using a clever regularization process. In their proofs, they solved a special class of complex Hessian equations (see \cref{Hessian equation for fHm} below) using the variational method developed by \cite{BBGZ13}. However, this approach cannot be directly extended to Hermitian manifolds due to the nonclosedness of the Hermitian form. In this article, we overcome this difficulty by proving a criterion for weak continuity of Complex Hessian measures analogous to the Monge-Amp\`ere case in \cite{KN25a}, as follows.

Let $\gamma\in\Gamma_m(\omega)$ be an $(\omega,m)$-positive form on $X$. The complex Hessian measure of a bounded potential $u\in\operatorname{SH}_m(X,\gamma,\omega)\cap L^\infty(X)$ is defined by 
$$
H_m(u):=(\gamma+dd^cu)^m\wedge\omega^{n-m}.
$$
For any Borel subset $E$ of $X$, the $m$-capacity of $E$ is defined as
$$
\operatorname{Cap}_{\gamma,\omega,m}(E):=\sup \left\{\int_{E} H_{m}(u)| u\in\operatorname{SH}_m(X, \gamma,\omega) \cap L^{\infty}(X), -1 \leq u\leq 0\right\}.
$$
A sequence of Borel functions $f_j$ is said to converge to $f$ in capacity on $X$ if for each fixed $\delta>0$, we have  
$$
\lim_{j\rightarrow +\infty} \operatorname{Cap}_{\gamma,\omega,m}\left( \left\{|f_j-f|>\delta \right \}\right)=0.
$$
In the first section, we will prove the following weak convergence Theorem of Hessian operators analogous to \cite[Theorem 1.1]{KN25a}:
\begin{theorem}\label{main weak convergence introduction}
    Let $\{u_j\}$ be a sequence of uniformly bounded $(\gamma,\omega,m)$-subharmonic functions. Assume that $H_{m}(u_j)\leq C_1H_{m}(\varphi_j)+C_2H_{m}(\psi_j)$ for some uniformly bounded sequence $\varphi_j\rightarrow\varphi \in\operatorname{SH}_m(X,\gamma,\omega)$ and $\psi_j\rightarrow\psi\in \operatorname{SH}_m(X,\gamma,\omega)$ in capacity. If $u_j\rightarrow u$ in $L^1(X)$, then a subsequence of $u_j$ converges in capacity to $u$. In particular, a subsequence of $H_{m}(u_j)$ converges weakly to $H_{m}(u)$.
\end{theorem}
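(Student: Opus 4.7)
The plan is to adapt the Monge--Amp\`ere strategy of \cite{KN25a} to the $m$-Hessian operator in the Hermitian setting. First I would extract a subsequence so that $u_j \to u$ almost everywhere (using $L^1$ convergence) and form the classical upper envelopes $u^j := \bigl(\sup_{k \geq j} u_k\bigr)^*$. These are $(\gamma,\omega,m)$-subharmonic, uniformly bounded, non-increasing in $j$, and $u^j \searrow u$ almost everywhere. Monotone convergence of bounded $m$-subharmonic potentials, available on Hermitian manifolds via the Bedford--Taylor machinery developed in \cite{GN18, KN25b}, then yields $u^j \to u$ in capacity and $H_m(u^j) \to H_m(u)$ weakly. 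Since $u_j \leq u^j$, the upper deviation $\{u_j > u + \delta\}$ is contained in $\{u^j > u + \delta\}$, whose capacity vanishes in the limit; it remains to bound $\operatorname{Cap}(F_j^\delta)$, where $F_j^\delta := \{u_j < u - \delta\}$.

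This is where the hypothesis on $H_m(u_j)$ is invoked. On $F_j^\delta$ we have $u_j + \delta < u \leq u^j$, so the Hermitian version of the Bedford--Taylor comparison principle delivers
\[
\int_{F_j^\delta} H_m(u^j) \;\leq\; \int_{F_j^\delta} H_m(u_j) + R_j \;\leq\; C_1 \!\int_{F_j^\delta} H_m(\varphi_j) + C_2 \!\int_{F_j^\delta} H_m(\psi_j) + R_j,
\]
where $R_j$ is the Hermitian correction built from $d\omega$ and $d^c\omega$ coupled with mixed Hessian products of $u_j$ and $u^j$. The convergence in capacity of $\varphi_j, \psi_j$, together with the uniform absolute continuity of $H_m(\varphi_j), H_m(\psi_j)$ with respect to the $m$-capacity (a standard estimate for Hessian measures of uniformly bounded potentials), forces the first two integrals on the right to tend to $0$ as soon as $\operatorname{Cap}(F_j^\delta) \to 0$. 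Pairing this with the lower bound
\[
\delta^m \, \operatorname{Cap}(F_j^\delta) \;\leq\; \int_{F_j^\delta} H_m(u^j),
\]
obtained by testing admissible potentials $\rho \in \operatorname{SH}_m(X,\gamma,\omega)$ with $-1\leq \rho \leq 0$ into the definition of capacity (via a further comparison between $u_j + \delta(1+\rho)$ and $u^j$), closes a bootstrap/contradiction loop and yields $\operatorname{Cap}(F_j^\delta) \to 0$. The weak convergence $H_m(u_j) \to H_m(u)$ is then the standard consequence of convergence in capacity for uniformly bounded $m$-subharmonic sequences.

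The principal obstacle is the Hermitian correction $R_j$: on a non-K\"ahler manifold the integration-by-parts underlying the comparison principle acquires boundary-like terms involving $d\omega$ and $d^c\omega$ that must be either shown to vanish in the limit or dominated by a controllable mixed Hessian quantity. This is precisely where the K\"ahler argument of \cite{DL15} does not transplant verbatim, and calls for a quantitative Hermitian comparison in the spirit of \cite{KN25b}. A secondary technical point is turning the weak convergence of $H_m(\varphi_j), H_m(\psi_j)$ into uniform control over the moving sets $F_j^\delta$; I would achieve this by approximating the envelopes by continuous potentials outside sets of arbitrarily small capacity (Hermitian quasicontinuity) and then combining with the aforementioned absolute continuity estimate.
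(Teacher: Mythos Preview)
Your outline has a genuine gap in the ``bootstrap/contradiction loop''. Uniform absolute continuity of $H_m(\varphi_j)$ with respect to capacity gives only $\int_{F_j^\delta} H_m(\varphi_j)\leq C\operatorname{Cap}_{\gamma,\omega,m}(F_j^\delta)$, and substituting this into your chain yields $\delta^m\operatorname{Cap}_{\gamma,\omega,m}(F_j^\delta)\leq C'\operatorname{Cap}_{\gamma,\omega,m}(F_j^\delta)+R_j$ with $C'$ independent of $\delta$; for small $\delta$ this is vacuous, so nothing closes. What you actually need is $\int_{F_j^\delta}H_m(\varphi_j)\to 0$ \emph{without} assuming anything about $\operatorname{Cap}_{\gamma,\omega,m}(F_j^\delta)$, and since $|u-u_j|>\delta$ on $F_j^\delta$ this amounts to $\int_X|u-u_j|H_m(\varphi_j)\to 0$. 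That statement is not a consequence of quasicontinuity or absolute continuity; it is the real analytic core, and your proposal does not indicate how to obtain it. On top of this, both the comparison inequality producing $R_j$ and your claimed lower bound $\delta^m\operatorname{Cap}_{\gamma,\omega,m}(F_j^\delta)\leq\int_{F_j^\delta}H_m(u^j)$ each rely on a Hermitian comparison principle carrying its own torsion corrections, none of which you control.

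The paper avoids sublevel-set comparison altogether. It first proves directly that $\int_X|u-u_j|H_m(w_j)\to 0$ whenever uniformly bounded $w_j\to w$ in capacity; this is where the Hermitian torsion is actually handled, by expanding $dd^c\bigl((\phi_j-u_j)T\bigr)$ into four pieces and estimating each via the Cauchy--Schwarz-type inequalities of \cite{KN25b}. Applying this with $w_j=\varphi_j,\psi_j$ and the hypothesis $H_m(u_j)\leq C_1H_m(\varphi_j)+C_2H_m(\psi_j)$ gives $\int_X|u-u_j|H_m(u_j)\to 0$. The paper then shows that this $L^1$ condition alone forces a subsequence to converge in capacity, and the proof of that criterion uses the rooftop envelopes $P_{\gamma,m}(\min(u_j,\dots,u_k))$, the mass-concentration inequality for envelopes, monotone convergence, and the domination principle---tools that replace the comparison principle and are insensitive to torsion. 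In short: you try to localize to $F_j^\delta$ and compare Hessian measures there, whereas the paper integrates $|u-u_j|$ globally against $H_m(u_j)$ and substitutes envelope/domination machinery for comparison.
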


\cref{main weak convergence introduction} can be used to solve a particular class of complex Hessian equations (see \cref{Hessian equation for fHm}), which is crucial in the proof of mixed Hessian inequalities. In \cref{section 4}, we adapt the approach of \cite{DL15} to establish the mixed Hessian inequalities with respect to a general Hermitian form. As in \cite{DL15}, we first work on compact complex manifolds, the local version is derived from the global version via an embedding and extension technique. A key feature of our strategy is a localization argument. Instead of the torus embedding used in the Kähler case (\cite{DL15}), we regard the local Euclidean ball as an open submanifold of the complex projective space $\mathbb{CP}^n$. The main result is as follows:
\begin{theorem}\label{local mix type introduction}
    Let $(B,\omega)$ be a small ball in $\mathbb{C}^n$ equipped with a Hermitian metric $\omega$ and let $\mu$ be a positive Radon measure on $B$, which is absolutely continuous with respect to the Hessian measure $H_m(\varphi):=(dd^c\varphi)^m\wedge\omega^{n-m}$ for some bounded $(\omega,m)$-subharmonic function $\varphi$ (see \cref{def: def of m-sh} below). Let $u_1,...,u_m$ be bounded $(\omega,m)$-subharmonic functions such that $(dd^cu_j)^m\wedge\omega^{n-m}\geq f_j\mu$ for some $0\leq f_j\in L^1(\mu)$ . Then
    $$
dd^cu_1\wedge...\wedge dd^cu_m\wedge\omega^{n-m}\geq(f_1...f_m)^{\frac{1}{m}}\mu.
    $$
\end{theorem}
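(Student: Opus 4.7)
The plan is to reduce this local inequality to the global mixed Hessian inequality on compact Hermitian manifolds (proved in \cref{section 4}) via an embedding-and-extension argument. Following the introduction, we view $B\subset\mathbb{C}^n$ as an open submanifold of $\mathbb{CP}^n$; the switch from the torus (used in the Kähler case of \cite{DL15}) to $\mathbb{CP}^n$ is necessary because in the Hermitian setting no flat model is available, but any prescribed Hermitian metric on a small coordinate ball extends, via a partition of unity, to a global Hermitian metric on $\mathbb{CP}^n$.

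\textbf{Step 1 (ambient setup and extension).} Fix nested balls $B''\subset\subset B'\subset\subset B$, and produce a Hermitian metric $\tilde\omega$ on $\mathbb{CP}^n$ with $\tilde\omega|_{B'}=\omega|_{B'}$. For each parameter $t>0$, extend the rescaled functions $tu_1,\dots,tu_m$ and $\varphi$ from $B'$ to bounded $(\tilde\omega,m)$-subharmonic functions $\tilde u_j^{(t)},\tilde\varphi$ on $\mathbb{CP}^n$ by the standard max-gluing: fix a smooth bounded strictly $(\tilde\omega,m)$-subharmonic function $\rho$ on $\mathbb{CP}^n$ and constants $c_j^{(t)}, K_t$ so that
\[
\tilde u_j^{(t)}:=\max(tu_j+c_j^{(t)},\rho-K_t)\text{ on }B',\qquad \tilde u_j^{(t)}:=\rho-K_t\text{ outside }B',
\]
agrees with $tu_j+c_j^{(t)}$ on $B''$ and with $\rho-K_t$ in a neighborhood of $\partial B'$ (and similarly for $\tilde\varphi$, handled once and independently of $t$). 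Truncate $\mu$ to $B''$ and extend by zero to a positive Radon measure $\tilde\mu$ on $\mathbb{CP}^n$; then $\tilde\mu\ll H_m(\tilde\varphi)$ globally, trivially outside $B''$ and, on $B''$, as a consequence of the local hypothesis $\mu\ll H_m(\varphi)$.

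\textbf{Step 2 (global application and the limit $t\to\infty$).} On $B''$ we have the identity $(\tilde\omega+dd^c\tilde u_j^{(t)})^m\wedge\tilde\omega^{n-m}=(\omega+t\,dd^cu_j)^m\wedge\omega^{n-m}$; binomial expansion shows this is a polynomial in $t$ with nonnegative measure coefficients (using the local $(\omega,m)$-sh hypothesis on $u_j$), whose $t^m$-coefficient is $(dd^cu_j)^m\wedge\omega^{n-m}\ge f_j\mu$. Thus $(\tilde\omega+dd^c\tilde u_j^{(t)})^m\wedge\tilde\omega^{n-m}\ge t^m f_j\tilde\mu$ holds globally on $\mathbb{CP}^n$, and the global mixed Hessian inequality yields, restricted to $B''$,
\[
\bigwedge_{j=1}^m(\omega+t\,dd^cu_j)\wedge\omega^{n-m}\ge t^m(f_1\cdots f_m)^{1/m}\,\mu.
\]
The left-hand side is a polynomial of degree $m$ in $t$ whose leading coefficient is $dd^cu_1\wedge\cdots\wedge dd^cu_m\wedge\omega^{n-m}$; testing against a nonnegative compactly supported function, dividing by $t^m$, and sending $t\to\infty$ eliminates the lower-order terms in $1/t$ and produces the desired inequality on $B''$. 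Since $B''\subset\subset B$ was arbitrary, the inequality holds on $B$.

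\textbf{Main obstacle.} The delicate technical step is the max-gluing in Step 1: one must arrange the shift constants $c_j^{(t)}, K_t$ and the model $\rho$ so that the gluing is clean on $B''$ and near $\partial B'$ for each $j$ and each $t>0$, while preserving global $(\tilde\omega,m)$-subharmonicity and the compatibility $\tilde\mu\ll H_m(\tilde\varphi)$. The rescaling device $u_j\mapsto tu_j$ is what converts the global inequality, which naturally produces wedges of $(\tilde\omega+dd^c\tilde u_j)$, into the pure-$dd^c$ wedges demanded by the local statement; the measure-theoretic passage to the limit $t\to\infty$ is then routine once the polynomial structure in $t$ is made explicit.
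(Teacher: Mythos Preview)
Your overall architecture—embed $B$ in $\mathbb{CP}^n$, extend, apply the global inequality, restrict—coincides with the paper's. The genuine gap is in the gluing of Step~1 once you rescale. With any \emph{fixed} bounded barrier $\rho$ on $\mathbb{CP}^n$, requiring simultaneously $tu_j+c_j^{(t)}>\rho-K_t$ on $B''$ and $tu_j+c_j^{(t)}<\rho-K_t$ near $\partial B'$ forces
\[
\sup_{B''}\rho-\inf_{\partial B'}\rho \;<\; t\bigl(\inf_{B''}u_j-\sup_{\partial B'}u_j\bigr).
\]
The left side is a fixed number; the right side carries the sign of $\inf_{B''}u_j-\sup_{\partial B'}u_j$, which for a generic bounded $(\omega,m)$-subharmonic $u_j$ is negative (nothing in the hypotheses forces $u_j$ to be smaller near $\partial B'$ than on $B''$), and then it tends to $-\infty$. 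So no shifts $c_j^{(t)},K_t$ make the gluing clean for large $t$. Replacing $\rho$ by a barrier with an isolated singularity does not rescue the argument: away from the singular point the barrier is still bounded on $\overline{B'}$, and the same oscillation obstruction applies on the annulus $B''\setminus\{\text{singular point}\}$ versus a collar of $\partial B'$. In short, the very rescaling that makes Step~2 succeed is what breaks Step~1.

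The paper avoids any rescaling by decoupling the form $\gamma$ in the global theorem from the background Hermitian metric. It picks a smooth plurisubharmonic defining function $\rho$ of $B$, sets $\omega_1:=dd^c\rho$, and extends $\omega_1$ and $\omega$ separately to Hermitian metrics $\omega_1',\omega'$ on $\mathbb{CP}^n$. Via \cref{extension to global} one builds a barrier $v\in\operatorname{PSH}(\mathbb{CP}^n,\omega_1')$ with an isolated singularity at $0\in B_1$, and then glues $u_j-\rho$ (not $tu_j$) with $v$ to get $u_j'\in\operatorname{SH}_m(\mathbb{CP}^n,\omega_1',\omega')\cap L^\infty$; after shrinking $B_1$ one has $u_j'=u_j-\rho$ there. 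Because $\omega_1'=dd^c\rho$ on $B_1$, the identity $\omega_1'+dd^cu_j'=dd^cu_j$ holds exactly on $B_1$, so restricting the global conclusion already yields
\[
dd^cu_1\wedge\cdots\wedge dd^cu_m\wedge\omega^{n-m}\geq(f_1\cdots f_m)^{1/m}\mu
\]
on $B_1$, with no limit in $t$ required. The idea missing from your plan is this choice of a locally $dd^c$-exact $\gamma$: it absorbs $\gamma$ into the potentials and thereby eliminates the need for the rescaling that your extension step cannot accommodate.
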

In the last section, we show that the mixed type inequality combined with the weak convergence theorem can be used to derive bounded solutions of complex Hessian equations with right-hand-side measures dominated by capacity, a result parallel to \cite{KN21}:
\begin{theorem}\label{main Hessian equation introduction}
    Let $\mu$ be a positive Radon measure on $X$ such that $\mu\leq A\operatorname{Cap}_{\gamma,\omega,m}^\tau$ for some $A>0$ and $1<\tau<\frac{n}{n-m}$. Moreover, assume that $\mu$ is absolutely continuous with respect to $H_m(\varphi)=(\gamma+dd^c\varphi)^m\wedge\omega^{n-m}$ for some $\varphi\in\operatorname{SH}_m(X,\gamma,\omega)\cap L^{\infty}(X)$. Then, there exist a unique constant $c>0$ and a function $u\in\operatorname{SH}_m(X,\gamma,\omega)\cap L^{\infty}(X)$ such that
    $$
H_m(u)=c\mu.
    $$
\end{theorem}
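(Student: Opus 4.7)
The plan is to run a standard approximation-and-limit scheme, using the two principal tools developed earlier: the weak convergence theorem \cref{main weak convergence introduction} to pass to the limit in the Hessian operator, and the mixed Hessian inequality \cref{local mix type introduction} to drive the comparison arguments underlying the a priori estimates and uniqueness. Writing $\mu = f\, H_m(\varphi)$ with $0 \le f \in L^1(H_m(\varphi))$ by Radon--Nikodym, I truncate $f_j := \min(f,j)$ and set $\mu_j := f_j H_m(\varphi)$, so $\mu_j \nearrow \mu$ and each $\mu_j$ has bounded density with respect to $H_m(\varphi)$. For each $j$, the special-density equation \cref{Hessian equation for fHm} of \cref{section 4} produces a constant $c_j > 0$ and a bounded potential $u_j \in \operatorname{SH}_m(X,\gamma,\omega)$ normalized by $\sup_X u_j = 0$ satisfying $H_m(u_j) = c_j \mu_j$.

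Next I would record two uniform estimates. First, $c_j$ is pinched between positive constants: Chern--Levine--Nirenberg in the Hermitian setting gives a two-sided control of $\int_X H_m(u_j)$ independent of $j$, while $\int \mu_j \nearrow \int \mu \in (0,\infty)$, forcing $0 < c_\ast \le c_j \le c^\ast$. Second, and more critically, $\|u_j\|_\infty \le M$ uniformly: since $H_m(u_j) \le c^\ast A\, \operatorname{Cap}_{\gamma,\omega,m}^\tau$ with $\tau > 1$, Kolodziej's capacity-comparison argument, adapted to the Hermitian Hessian setting, supplies the desired bound. Standard $L^1$-compactness of uniformly bounded $(\gamma,\omega,m)$-subharmonic potentials then yields, along a subsequence, $u_j \to u$ in $L^1(X)$ with $u$ bounded and $\sup u = 0$, and $c_j \to c > 0$.

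The heart of the argument, and the main obstacle, is the limit in $H_m(u_j) = c_j \mu_j$, which is where \cref{main weak convergence introduction} is brought in. Its hypothesis demands a domination $H_m(u_j) \le C_1 H_m(\varphi_j) + C_2 H_m(\psi_j)$ by uniformly bounded sequences that converge in capacity, and our $H_m(u_j)$ has the mildly singular form $c_j f_j H_m(\varphi)$ with $f_j$ growing. My strategy is to construct a single bounded barrier $w \in \operatorname{SH}_m(X,\gamma,\omega) \cap L^\infty(X)$ with $H_m(w) \ge \mu$, and then take $\varphi_j = \psi_j = w$ as constant sequences, so that the domination $H_m(u_j) \le c^\ast H_m(w)$ is immediate. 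The existence of $w$ is a subsolution statement: it can be produced either by an envelope-of-subsolutions construction in the Kolodziej--Cegrell spirit exploiting $\mu \le A\, \operatorname{Cap}_{\gamma,\omega,m}^\tau$ with $\tau > 1$, or, more concretely, as the $L^1$-limit of the bounded-density solutions of the truncated problem (whose uniform $L^\infty$ bound comes from the same capacity estimate), together with a lower-semicontinuity of $H_m$ along the increasing approximations $\mu_j \nearrow \mu$. Once the barrier $w$ is in hand, \cref{main weak convergence introduction} upgrades $u_j \to u$ in $L^1$ to convergence in capacity along a further subsequence, and hence $H_m(u_j) \to H_m(u)$ weakly; combined with $c_j \mu_j \to c\mu$ by monotone convergence, this yields $H_m(u) = c\mu$.

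Uniqueness of $(c,u)$ should be classical: $c$ is pinned down by integrating the equation and using the two-sided Chern--Levine--Nirenberg control of $\int H_m(u)$, while uniqueness of $u$ with the normalization $\sup u = 0$ follows from the comparison principle for bounded $(\gamma,\omega,m)$-subharmonic potentials, itself a direct consequence of the mixed Hessian inequality \cref{local mix type introduction} as in \cite[Proposition 3.16]{KN16}.
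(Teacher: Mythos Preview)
Your scheme has the right shape but three of its load-bearing steps do not work as stated.

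First, \cref{Hessian equation for fHm} does not produce a pair $(c_j,u_j)$ with $H_m(u_j)=c_j\mu_j$; it solves the exponential equation $H_m(u)=e^u\mu$, and it requires the density to be bounded below by a positive $\delta$. Your truncation $f_j=\min(f,j)$ can vanish wherever $f$ does, so you cannot even invoke the proposition. The paper spends an entire Step~1 passing from the exponential equation to the constant-twist equation (solve $H_m(u_j)=e^{u_j/j}\mu$ and let $j\to\infty$), and then needs two further steps to drop the lower bound on $f$: first replacing $f$ by $\min(f,j)$ with $f\ge\delta$ still in force, then replacing $f$ by $\max(f,1/j)$.

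Second, the Chern--Levine--Nirenberg argument you propose for pinching $c_j$ fails on a Hermitian manifold: $\int_X H_m(u)$ is not a cohomological constant, so integrating the equation does not determine $c$. The paper handles the lower bound on $c_j$ via a separate capacity argument (\cref{lower bound of c}), and the upper bound on $c_j$ in the final step is exactly where the mixed Hessian inequality is used in existence: from $H_m(u_j)=b_jg_jH_m(\varphi)$ one gets $\gamma_{u_j}\wedge\gamma_\varphi^{m-1}\wedge\omega^{n-m}\ge (b_jg_j)^{1/m}H_m(\varphi)$, and integrating this (with \cref{CLN ieq} controlling the torsion terms) bounds $b_j$. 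Your proposal relegates \cref{local mix type introduction} to uniqueness only; in fact it is indispensable for existence. The same remark applies to your uniqueness argument for $c$: it follows from the domination principle \cref{domination for beta 3}, not from CLN.

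Third, your barrier $w$ with $H_m(w)\ge\mu$ is circular. Producing it as an $L^1$-limit of the $u_j$ together with ``lower semicontinuity of $H_m$'' is precisely the weak-limit passage you are trying to justify; and an abstract envelope-of-subsolutions construction would itself require solving a version of the equation. The paper sidesteps the barrier entirely: once $H_m(u_j)\le c_1\mu$ with $\mu$ non-$m$-polar, \cref{lem: convergence} gives $\int_X|u-u_j|\,d\mu\to0$, hence $\int_X|u-u_j|H_m(u_j)\to0$, and then the criterion \cref{characterzation of convergence in cap} yields convergence in capacity directly, without ever dominating $H_m(u_j)$ by a single Hessian measure.
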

To prove \cref{main Hessian equation introduction}, we start from the resolution of the special Hessian equation (\cref{Hessian equation for fHm}) and proceeding approximations time by times to to get the final solution.
\begin{remark}
    In the special case where the metric $\omega$ is locally conformal K\"ahler and $\mu$ is absolutely continuous with respect to the Lebesgue measure on $X$, continuous solutions were obtained by \cite{LeV23}. Our approach primarily employs a monotone approximation and relies heavily on the explicit Cegrell-type decomposition. It is therefore desirable to remove the assumption $\mu<<H_m(\varphi)$ in \cref{main Hessian equation introduction}. Moreover, it is possible to obtain the continuity of the solution in \cref{main Hessian equation introduction} by establishing a stability result, as illustrated in the Monge-Amp\`ere case by \cite{KN21}.
\end{remark}

The paper is organized as follows. In \cref{section 2}, we briefly recall some basic definitions and properties of $(\omega,m)$-subharmonic functions, including the comparsion principle, envelope construction and the domination principle. In \cref{section 3}, we prove \cref{main weak convergence introduction}, using corresponding techniques of Ko\l odziej in the local setting. In \cref{section 4} we prove \cref{local mix type introduction} and we finally give the resolution of \cref{main Hessian equation introduction} in \cref{section 5}.
\subsection*{Acknowledgements}
The author would like to thank Zhiwei Wang (his advisor) and Professor Ngoc Cuong Nguyen for their helpful comments on an earlier version of this manuscript.

\section{preliminaries}\label{section 2}
In this section, we first recall some basic facts about $(\omega,m)$-subharmonic functions. Let $(X,\omega)$ be a compact Hermitian manifold of dimension $n$, we also use the notation ($\Omega,\omega)$ to denote a domain equipped with a Hermitian form $\omega$ in $\mathbb{C}^n$. Fix an integer $m$ such that 1$\leqslant m\leqslant n$. Throughout this paper, we denote by $d=\partial+\bar{\partial}$ to be the usual exterior derivative and $d^c:=\frac{1}{2i}(\partial-\bar{\partial})$ to be a real operator such that $dd^c=i\partial\bar{\partial}$.
\begin{definition}\label{definition: m-positive}
    A smooth real $(1,1)$-form $\alpha$ is called $(\omega,m)$-positive ($m$-positive for short if there is no confusion with $\omega$) in $\Omega$ if the following inequalities hold pointwise in the sense of smooth forms:
    $$
    \alpha^{k}\wedge\omega^{n-k}>0, \quad k=1,...,m.
    $$
    We will use the notation $\Gamma_m(\omega)$ to denote the open convex cone of all $(\omega,m)$-positive $(1,1)$-forms and $\overline{\Gamma_m(\omega)}$ its closure. At any fixed point of $X$, we can diagonalize $\omega$ with respect to $\alpha$ and let $\lambda_1,...,\lambda_n$ be the eigenvalues of $\alpha$ with respect to $\omega$. Then, the above condition reads  
    $$
S_{k,\omega}(\alpha)>0,\quad 1\leq k\leq m,
    $$
    where $S_{k,\omega}(\alpha):=\underset{1\leq j_1<...<j_k\leq n}{\sum}\lambda_{j_1}...\lambda_{j_k}$ is the $k$-th symmetric polynomial of the eigenvalues of $\alpha$ with respect to $\omega$. 

    A function $u\in C^{2}(\Omega)$ is called $(\omega,m)$-subharmonic, denoted $u\in \operatorname{SH}_{m}(\Omega,\omega)$, if $dd^{c}u$ lies in $\overline{\Gamma_m(\omega)}$ at all points of $X$. It is called strictly $(\omega,m)$-subharmonic if $dd^cu$ lies in $\Gamma_m(\omega)$ pointwise on $X$.
\end{definition}

\begin{definition}\label{def: def of m-sh}
    A function $L_{loc}^1(\Omega,\omega^n)\ni u:\Omega\to\mathbb{R}\cup\{-\infty\} $ is called $(\omega,m)$-subharmonic if it is strongly upper semi-continuous and such that
$dd^cu$ is an $(\omega,m)$-positive current, i.e., for arbitrary m-positive (1,1)-forms $\alpha_1,...\alpha_{m-1}$ on $\Omega$, the following inequality holds in the weak sense of currents:
    $$
    dd^{c}u\wedge\alpha_1\wedge...\wedge\alpha_{m-1}\wedge\omega^{n-m}\geq 0.
    $$
Here we say $u$ is strongly upper semi-continuous if $\forall x\in\Omega$, $u(x)=\underset{\Omega\ni y\rightarrow x}{\operatorname{ess}\limsup}\,u(y):=\underset{r\searrow0}{\lim}\operatorname{ess}\underset{B_r(x)}{\sup}u$ .
\end{definition}
\begin{definition}
    Let $(X,\omega)$ be a compact Hermitian manifold and let $\gamma\in\Gamma_m(\omega)$ be an $(\omega,m)$-positive form. A function $u$ on $X$ is called $(\gamma,\omega,m)$-subharmonic, denoted $u\in \operatorname{SH}_m(X,\gamma,\omega)$, if it can be locally written as a sum of a smooth function and an $(\omega,m)$-subharmonic function, and globally for any $(m-1)$-tuple of forms $\alpha_1,...,\alpha_{m-1}$ lie in $\Gamma_m(\omega)$,
    $$
(\gamma+dd^cu)\wedge\alpha_1\wedge...\wedge\alpha_{m-1}\wedge\omega^{n-m}\geq0\quad on\;X.
    $$
\end{definition}

For other equivalent definitions, see \cite{PSWZ25} and \cite[section 9]{GN18}. 
We next review the definition of Hessian measures with respect a background Hermitian form from \cite{KN25b}:
\begin{definition}\label{def: Hessian measures in the local setting}
    Let $\omega$ be a Hermitian metric on a bounded domain $\Omega\subset\mathbb{C}^n$ and $u_1,...,u_m$ be $(\omega,m)$-subharmonic functions on $\Omega$. We can inductively define 
    $$
dd^cu_{p+1}\wedge...\wedge dd^cu_1:=dd^c(u_{p
+1}dd^cu_p\wedge...\wedge dd^cu_1)
    $$
    as closed real currents of order $0$ on $\Omega$. Then
    $$
H_p(u_1,...,u_p):=dd^cu_{p}\wedge...\wedge dd^cu_1\wedge\omega^{n-m},\quad 1\leq p\leq m.
    $$
    is a well-defined closed positive current (positive Radon measure when $p=m$) on $\Omega$. When $u_1=...=u_m=u$, we write
    $$
H_p(u):=(dd^cu)^p\wedge\omega^{n-m},\quad1\leq p\leq m.
    $$
    \end{definition}

   In the sequel of this paper, unless otherwise stated, we will always use the letter $\gamma$ to denote a smooth strictly $(\omega,m)$- positive $(1,1)$-form on $X$. For each $u\in\operatorname{SH}_m(X,\gamma,\omega)\cap L^\infty(X)$, choose a coordinate ball $B\subset X$ and $\phi\in\operatorname{PSH}(B)\cap C^\infty(X)$ such that $dd^c\phi\geq\gamma$, we can define on $B$ the associated Hessian measure of $u$ by
  \begin{align*}
H_{m}(u)&:=(\gamma+dd^cu)^m\wedge\omega^{n-m}=[dd^c(u+\phi)+(\gamma-dd^c\phi)]^m\wedge\omega^{n-m}\\
&=\sum_{k=0}^{m}C_m^k[dd^c(u+\phi)]^k\wedge(\gamma-dd^c\phi)^{m-k}\wedge\omega^{n-m}\\
&=\sum_{k=0}^{m}C_m^kH_k(u+\phi)\wedge(\gamma-dd^c\phi)^{m-k},
    \end{align*}
    This is a well-defined positive Radon measure on $B$ and is clearly independent of the choice of $\phi$. Now, proceeding a partition of unity, we can glue these local measures globally on $X$.

    As illustrated in the main theorem of \cite{Fang25a}, $(\gamma,\omega,m)$-subharmonic functions have nice integrability properties:
\begin{proposition}\label{integrability}
  For each $1<p<\frac{n}{n-m}$, we have $\operatorname{SH}_m(X,\gamma,\omega)\subset L^p(X,\omega^n)$.
\end{proposition}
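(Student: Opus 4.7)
The argument is essentially a globalization of a local $L^p$ estimate. My plan is to first reduce \cref{integrability} to a statement about $(\omega,m)$-subharmonic functions on a small ball in $\mathbb{C}^n$, and then invoke the local $L^p$ integrability proven in \cite{Fang25a}.

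For the reduction, I would fix a finite cover $\{B_i\}_{i=1}^N$ of $X$ by coordinate balls together with a subordinate partition of unity $\{\chi_i\}$. Since
\[
\int_X |u|^p\,\omega^n \;\leq\; \sum_{i=1}^N \int_{B_i}|u|^p\,\omega^n,
\]
it suffices to control each local integral. On $B_i$, the very definition of $(\gamma,\omega,m)$-subharmonicity yields a decomposition $u|_{B_i}=\phi_i+v_i$, where $\phi_i\in C^\infty(B_i)$ is bounded and $v_i\in\operatorname{SH}_m(B_i,\omega|_{B_i})$. Hence the global claim reduces to the local statement: any $v\in\operatorname{SH}_m(B,\omega)$ on a small ball $B\subset\mathbb{C}^n$ lies in $L^p_{\mathrm{loc}}(B)$ for every $1<p<\frac{n}{n-m}$.

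For the local estimate, I would approximate $v$ on $B'\Subset B$ by a decreasing sequence of smooth $(\omega,m)$-subharmonic functions $v_\varepsilon$ (obtained via standard convolution on a slightly larger ball, after a small perturbation of $\omega$ if needed to preserve $m$-positivity during smoothing). The core analytic ingredient is then a Chern--Levine--Nirenberg type estimate
\[
\int_{B''}(-v_\varepsilon)^p\,\omega^n \;\leq\; C\bigl(1+\|v_\varepsilon\|_{L^1(B')}\bigr)^p \qquad (B''\Subset B'),
\]
obtained by iterated integration by parts against test forms built from suitable cut-offs and powers of $\omega$, combined with the $m$-positivity of $v_\varepsilon$. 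Passing to the limit $\varepsilon\to 0$ via monotone convergence then yields $v\in L^p_{\mathrm{loc}}(B)$, which combines with the partition of unity step to finish the proof.

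The principal obstacle is carrying out the integration by parts rigorously in the Hermitian (non-K\"ahler) setting: because $d\omega\neq 0$, each application of Stokes' theorem generates error terms involving $\partial\omega\wedge\bar\partial\omega$ and $dd^c\omega^k$, which are not automatically non-negative. These must be absorbed using the $m$-positivity of $dd^c v_\varepsilon$ together with uniform bounds on the derivatives of $\omega$, and it is precisely this non-closedness that accounts for the restriction to the exponent $\frac{n}{n-m}$ (rather than the sharper K\"ahler value $\frac{nm}{n-m}$). Since this local estimate is the main content of \cite{Fang25a}, I would simply invoke it at this stage, after the reduction described above, to complete the proof.
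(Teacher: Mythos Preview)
The paper does not actually prove this proposition: it simply records it as ``the main theorem of \cite{Fang25a}'' and moves on. Since \cite{Fang25a} is explicitly about compact Hermitian manifolds, your localization step is unnecessary---you can cite the reference directly for the global statement, exactly as the paper does---but it is harmless and your proposal is correct.

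One cautionary remark on the sketch you gave of the local argument (even though you ultimately defer to \cite{Fang25a}): the claim that standard convolution produces a decreasing sequence of smooth $(\omega,m)$-subharmonic approximants is delicate when $\omega$ is not the Euclidean form. Convolution does not commute with $dd^c$ in a way that preserves $m$-positivity with respect to a variable $\omega$, and the phrase ``after a small perturbation of $\omega$ if needed'' hides real work (this is precisely why smoothing results such as \cite[Theorem~9.4]{PSWZ25} are nontrivial). Also, the parenthetical about the ``sharper K\"ahler value $\frac{nm}{n-m}$'' is not quite right: the exponent $\frac{n}{n-m}$ is the one that appears in the K\"ahler case as well (cf.\ Dinew--Ko\l odziej), and the Hermitian torsion does not change the threshold here. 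None of this affects the correctness of your proposal, since you invoke \cite{Fang25a} rather than carry out these steps yourself.
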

We record the following basic compactness result:
\begin{proposition}\cite[Lemma 3.3]{KN16} \cite[Proposition 2.15]{PSWZ25} \label{L^1 compactness}
     There is a uniform constant $C=C(X,\gamma,\omega)$,  such that for any $\varphi\in \operatorname{SH}_m(X,\gamma,\omega)$, $\sup_X\varphi=0$, we have    
$$
\int_X|\varphi|\omega^n\leq C.    
$$
In particular, the family $\{\varphi\in \operatorname{SH}_m(X,\gamma,\omega),\;\sup_X\varphi=0\}$ is relatively compact with respect to the $L^1(\omega^n)$-topology in $\operatorname{SH}_m(X,\gamma,\omega)$.
\end{proposition}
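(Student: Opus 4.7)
The plan is to reduce the statement to classical $\omega$-subharmonic estimates on coordinate charts and then propagate local $L^1$ bounds across $X$ by a chain-of-balls argument. The crucial reduction uses that $\omega \in \Gamma_m(\omega)$: taking $\alpha_1 = \cdots = \alpha_{m-1} = \omega$ in the definition of $(\gamma,\omega,m)$-subharmonicity gives the trace inequality
$$dd^c \varphi \wedge \omega^{n-1} \geq -\gamma \wedge \omega^{n-1} = -h\,\omega^n,$$
where $h$ depends only on $(X,\gamma,\omega)$. I would then fix a finite cover $X = \bigcup_\alpha B_\alpha$ by coordinate balls $B_\alpha \Subset B_\alpha'$, arranged so that any two of them are connected through a chain of pairwise-overlapping balls, and on each $B_\alpha'$ choose a smooth strictly plurisubharmonic $\psi_\alpha$ with $dd^c \psi_\alpha \wedge \omega^{n-1} \geq h\,\omega^n$. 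Then $u_\alpha := \varphi + \psi_\alpha$ is a distributional subsolution of the uniformly elliptic operator $v \mapsto \operatorname{tr}_\omega(dd^c v)$ on $B_\alpha'$, and hence classically $\omega$-subharmonic there, satisfying a sub-mean-value inequality with constants depending only on $(X,\gamma,\omega)$.

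For the $L^1$ bound I would argue by contradiction: suppose $\varphi_j \in \operatorname{SH}_m(X,\gamma,\omega)$ has $\sup_X \varphi_j = 0$ but $\int_X |\varphi_j|\,\omega^n \to \infty$. By upper semicontinuity and compactness of $X$, pick $x_j \in X$ with $\varphi_j(x_j) = 0$, and after extraction assume $x_j \to x_\star \in B_\alpha$ for some $\alpha$. For $j$ large, the sub-mean-value inequality applied at $x_j$ gives
$$-\psi_\alpha(x_j) = u_\alpha(x_j) \leq \frac{C}{|B(x_j,r)|} \int_{B(x_j,r)} u_\alpha\, dV,$$
so $\int_{B(x_j,r)} u_\alpha\, dV \geq -C'$; combined with the uniform upper bound $u_\alpha \leq \sup \psi_\alpha$, this yields a uniform $L^1$ bound for $\varphi_j$ on a fixed ball around $x_\star$. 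To propagate, I would use the following step: given a uniform bound $\int_B |\varphi_j|\,\omega^n \leq C$ on a ball, Chebyshev's inequality yields a point $y_j \in \tfrac{1}{2}B$ with $\varphi_j(y_j)$ bounded below uniformly in $j$; applying the sub-mean-value at $y_j$ in any overlapping ball $B_\beta'$ then transfers the $L^1$ bound to $B_\beta$. Since $X$ is compact and connected, a finite chain of such transfers through the cover yields $\int_X |\varphi_j|\,\omega^n \leq C_X$ uniformly in $j$, a contradiction.

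The relative compactness in the $L^1(\omega^n)$-topology then follows at once: each $u_{\alpha,j} := \varphi_j + \psi_\alpha$ is classically subharmonic on $B_\alpha'$ with uniform upper bound and uniform $L^1$-norm, so standard distributional compactness for subharmonic families produces, via diagonal extraction, a subsequence converging in $L^1_{\operatorname{loc}}(B_\alpha')$ for every $\alpha$, hence in $L^1(X,\omega^n)$. The limit inherits $(\gamma,\omega,m)$-subharmonicity by passing to the limit in the distributional positivity condition against smooth test forms. The main obstacle is the chain propagation step, where one must carefully choose the radii in the sub-mean-value inequality and the overlap pattern of the cover so that all constants depend only on $(X,\gamma,\omega)$ and not on $j$; once the cover $\{B_\alpha \Subset B_\alpha'\}$ and the auxiliary functions $\psi_\alpha$ are fixed at the outset, the rest is classical.
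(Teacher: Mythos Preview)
The paper does not supply its own proof of this proposition; it is recorded as a preliminary fact with references to \cite{KN16} and \cite{PSWZ25}. Your argument is correct and is essentially the standard route used in those references: testing the $(\gamma,\omega,m)$-positivity condition against $\alpha_1=\cdots=\alpha_{m-1}=\omega$ yields the trace inequality $(\gamma+dd^c\varphi)\wedge\omega^{n-1}\geq 0$, so after a local smooth correction $\varphi$ becomes $\Delta_\omega$-subharmonic, and the rest is the classical chain-of-balls propagation for quasi-subharmonic functions on a compact connected manifold. Two small points worth tightening: your $\psi_\alpha$ only needs $\Delta_\omega\psi_\alpha\geq h$ (strict plurisubharmonicity plays no role here), and in the compactness step the $L^1$ limit must be replaced by its upper semicontinuous regularization before asserting membership in $\operatorname{SH}_m(X,\gamma,\omega)$, together with the observation that the limit is not identically $-\infty$ thanks to the uniform $L^1$ bound. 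The propagation you flag as the main obstacle is indeed routine once the finite cover and radii are fixed, since each overlap $B_\alpha\cap B_\beta$ has volume bounded below independently of $j$ and the sub-mean-value constants for $\Delta_\omega$-subsolutions depend only on the ellipticity of $\omega$ on the fixed charts $B_\alpha'$.
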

    
    We record the domination principles proved in \cite{PSWZ25}, which will be useful later:
    \begin{proposition}\cite[Proposition 5.2]{PSWZ25} \label{domination for beta}
 Fix a constant $0\leq c<1$. Assume $u,v\in\operatorname{SH}_m(X, \gamma,\omega) \cap L^{\infty}(X)$ satisfies $\mathds{1}_{\{u<v\}}H_{m}(u)\leq c\mathds{1}_{\{u<v\}}H_{m}(v)$, then $u\geq v$.
\end{proposition}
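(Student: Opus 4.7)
My plan is a contradiction argument combining a suitable comparison principle with the contraction factor $c<1$. Suppose for contradiction that the open set $V_\delta := \{u<v-\delta\}$ is nonempty for some $\delta>0$; otherwise $u \geq v - \delta$ for every $\delta > 0$ and we are done. The first step is to establish the comparison-principle inequality
\begin{equation*}
\int_{V_\delta} H_m(v) \;\leq\; \int_{V_\delta} H_m(u).
\end{equation*}
On a Hermitian (non-Kähler) manifold the usual integration-by-parts argument is not formal because $\omega$ is not closed, so I would handle this by localizing in charts and adding a smooth local potential $\phi$ with $dd^c\phi\geq\gamma$; this turns $u+\phi$ and $v-\delta+\phi$ into honest bounded $(\omega,m)$-subharmonic functions on a coordinate ball, where the local Hermitian comparison principle developed in \cite{KN25b} applies, and a partition-of-unity argument then glues the local estimates globally.

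Combined with the hypothesis and the inclusion $V_\delta\subset\{u<v\}$, this yields
\begin{equation*}
\int_{V_\delta} H_m(v) \;\leq\; \int_{V_\delta} H_m(u) \;\leq\; c\int_{V_\delta} H_m(v),
\end{equation*}
so since $c<1$ we obtain $H_m(v)(V_\delta)=0$, and by the hypothesis also $H_m(u)(V_\delta)=0$.

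To convert the measure-theoretic vanishing into the pointwise conclusion that $V_\delta$ is empty, I would introduce the auxiliary function $w:=\max(u,v-\delta)\in\operatorname{SH}_m(X,\gamma,\omega)\cap L^\infty(X)$, which coincides with $v-\delta$ on $V_\delta$ and with $u$ on its complement, so that $H_m(w)=H_m(v)$ on the open set $V_\delta$ and therefore vanishes there. Since $w>u$ strictly on $V_\delta$ but $w=u=v-\delta$ on $\partial V_\delta$, uniqueness for the local Dirichlet problem for $H_m(\cdot)=0$ applied to the pair $(u,w)$ on $V_\delta$ (both functions share the same boundary values and both have vanishing Hessian measure on $V_\delta$) forces $u=w$ on $V_\delta$, contradicting $w>u$ there. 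The main obstacle is the first step, namely the Hermitian comparison principle in the presence of torsion from $\omega$ and the twist form $\gamma$; once that local-to-global reduction is handled, the remainder is a standard maximum-principle manipulation in the style of Cegrell.
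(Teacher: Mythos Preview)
The paper itself does not prove this proposition; it is quoted without proof from \cite{PSWZ25}. Assessing your proposal on its own, there is a genuine gap in Step~1. The clean comparison inequality $\int_{V_\delta} H_m(v) \leq \int_{V_\delta} H_m(u)$ is not available on a Hermitian manifold: integration by parts produces torsion terms involving $d\omega$ and $d\gamma$ with no definite sign. Your proposed fix---localize in charts and glue by a partition of unity---cannot rescue it, because multiplying by a cutoff $\chi$ introduces derivative terms from $dd^c(\chi\cdot\,)$ that again have no sign, and the comparison inequality is nonlinear in the potentials so a convex combination over a cover does not reproduce it. Even locally on a ball with a Hermitian background form, the comparison principles available in \cite{KN16,KN25b} only yield inequalities with multiplicative errors of the form $(1+O(\varepsilon))$ and enlarged sublevel sets $\{u<v+\varepsilon\}$, not the sharp inequality you wrote down.

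Your Step~2 is also circular. You invoke uniqueness for the homogeneous Dirichlet problem on the open set $V_\delta$, but $V_\delta$ is an arbitrary open subset of the compact manifold $X$---possibly $V_\delta=X$, in which case there is no boundary at all---and uniqueness in that generality is itself a domination statement of exactly the type you are trying to prove. The arguments actually used in this setting (cf.\ the modified comparison principle of \cite{KN16}, or the envelope machinery recalled in \cref{contact} and \cref{mass concentration 2}) proceed instead by establishing a comparison inequality with explicit small error terms and then using the strict gap $c<1$ to absorb those errors; that absorption step is the substantive content of the proof, not a formality to be waved at after the fact.
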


\begin{corollary}\cite[Corollary 5.3]{PSWZ25}
\label{domination for beta 2}
   Notations as \cref{domination for beta}, if
    $$
e^{-\lambda u}H_{m}(u)\leq e^{-\lambda v}H_{m}(v),
    $$
    then $u\geq v$.
\end{corollary}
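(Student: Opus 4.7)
The plan is to reduce this statement to \cref{domination for beta} by trading the exponential weight for a strict constant $c<1$ on a suitable sub-level set. The natural way to do this is to compare $u$ not with $v$ itself but with the perturbation $v_\varepsilon:=v-\varepsilon$ for an arbitrary $\varepsilon>0$. Since $v_\varepsilon$ differs from $v$ by a constant, it still lies in $\operatorname{SH}_m(X,\gamma,\omega)\cap L^\infty(X)$ and its Hessian measure is unchanged, $H_m(v_\varepsilon)=H_m(v)$.

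On the set $\{u<v_\varepsilon\}=\{u<v-\varepsilon\}$ we have $u-v<-\varepsilon$, so the hypothesis rewrites as
\[
\mathds{1}_{\{u<v_\varepsilon\}}H_m(u)\;\leq\;\mathds{1}_{\{u<v_\varepsilon\}}e^{\lambda(u-v)}H_m(v)\;\leq\;e^{-\lambda\varepsilon}\,\mathds{1}_{\{u<v_\varepsilon\}}H_m(v_\varepsilon),
\]
(assuming, as is tacit in the statement, that $\lambda>0$; the case $\lambda\le 0$ is either trivial or vacuous). Setting $c:=e^{-\lambda\varepsilon}\in[0,1)$, the hypothesis of \cref{domination for beta} is met with $u$ and $v_\varepsilon$, and we conclude $u\geq v_\varepsilon=v-\varepsilon$.

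Finally, letting $\varepsilon\searrow 0$ gives $u\geq v$, which is the desired conclusion. There is no real obstacle here: the only subtlety is recognizing that one must not try to apply the previous proposition directly to the pair $(u,v)$, since the exponential weight only becomes a genuine strict constant after restricting to the set where $u$ is strictly below a shifted version of $v$; this is precisely what the $\varepsilon$-shift accomplishes.
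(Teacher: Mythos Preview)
Your proof is correct and is exactly the standard argument: shift $v$ down by $\varepsilon$, use the hypothesis on the set $\{u<v-\varepsilon\}$ to turn the exponential weight into the strict constant $e^{-\lambda\varepsilon}<1$, apply \cref{domination for beta}, and let $\varepsilon\searrow 0$. The paper does not supply its own proof of this corollary (it merely cites \cite[Corollary 5.3]{PSWZ25}), and your derivation is precisely the natural one from \cref{domination for beta}.
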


\begin{corollary}\cite[Corollary 5.4]{PSWZ25} \label{domination for beta 3}
Notations as \cref{domination for beta}, if
$$
H_{m}(u)\leq cH_{m}(v)
$$
for some constant $c>0$, then $c\geq 1$.
\end{corollary}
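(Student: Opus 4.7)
The plan is to argue by contradiction using the domination principle already stated as \cref{domination for beta}. Suppose for contradiction that $0 < c < 1$. The key observation I would exploit is that adding a real constant to a $(\gamma,\omega,m)$-subharmonic function preserves $(\gamma,\omega,m)$-subharmonicity and leaves the Hessian measure invariant, since $dd^c(v+t) = dd^c v$. Consequently the hypothesis $H_m(u) \leq c H_m(v)$ is equivalent to $H_m(u) \leq c H_m(v+t)$ as measures on $X$, for every $t \in \mathbb{R}$.

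Fix such a $t$. Restricting the latter inequality to the set $\{u < v+t\}$ gives
$$
\mathds{1}_{\{u < v+t\}} H_m(u) \leq c\, \mathds{1}_{\{u < v+t\}} H_m(v+t),
$$
so the hypothesis of \cref{domination for beta} is met by the pair $(u,\,v+t)$ with constant $c \in [0,1)$. I would then invoke \cref{domination for beta} to conclude $u \geq v + t$ on $X$. Since $t \in \mathbb{R}$ is arbitrary, letting $t \to +\infty$ contradicts the fact that both $u$ and $v$ lie in $L^\infty(X)$. Hence the assumption $c<1$ is untenable, and $c \geq 1$ as desired.

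I do not expect a serious obstacle here: the entire argument is a one-parameter sliding trick built on top of \cref{domination for beta}. The only point that deserves a brief mention in the write-up is the trivial fact that $v+t$ remains a bounded $(\gamma,\omega,m)$-subharmonic function for every $t\in\mathbb{R}$, which follows immediately from the definition and ensures that \cref{domination for beta} applies to the shifted pair.
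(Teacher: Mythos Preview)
Your argument is correct. The paper does not supply its own proof of this corollary; it merely cites \cite[Corollary 5.4]{PSWZ25}, so there is no in-paper proof to compare against. The sliding trick you use---replacing $v$ by $v+t$ (which is still bounded $(\gamma,\omega,m)$-subharmonic with the same Hessian measure), applying \cref{domination for beta}, and sending $t\to+\infty$---is the standard way to derive this kind of statement from the domination principle and would be accepted without change.
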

    \begin{definition}\label{def of cap_gamma}
    Let $E$ be a Borel subset of $X$. We define
    $$\operatorname{Cap}_{\gamma,\omega,m}(E):=\sup \left\{\int_{E} H_{m}(u)| u\in\operatorname{SH}_m(X, \gamma,\omega) \cap L^{\infty}(X), -1 \leq u\leq 0\right\},$$
     Note that the set $\{u\in\operatorname{SH}_m(X, \gamma,\omega) \cap L^{\infty}(X), -1 \leq u\leq 0\}$ is non-empty. A Borel set $E\subset X$ is said to be $m$-polar if it is locally contained in the polar set of an $(\omega,m)$- subharmonic function. A measure $\mu$ on $X$ is called non-m-polar if it does not charge any m-polar sets. We refer the reader to the recent paper \cite{KN25c} for some characterizations of $m$-polar sets.
\end{definition}
\begin{theorem}\cite[Theorem 4.9]{KN25b}\cite[Proposition 5.5]{KN25c}
Every $\varphi\in \operatorname{SH}_{m}(X,\gamma,\omega)$ is quasi-continuous with respect to $\operatorname{Cap}_{\gamma,\omega,m}(\cdot)$, i.e., for every $\varepsilon>0$, there exists an open set $U\subset X$ such that $\operatorname{Cap}_{\gamma,\omega,m}(U)<\varepsilon$ and $\varphi$ is continuous on $X-U$. 
\end{theorem}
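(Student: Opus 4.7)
The plan is to adapt the classical Bedford--Taylor argument for quasi-continuity of plurisubharmonic functions, localizing to small coordinate balls and combining a local regularization with a capacity-based Chebyshev estimate. Quasi-continuity is local in the following sense: if $X=\bigcup_{i=1}^N B_i$ is a finite cover by coordinate balls and on each $B_i$ we can find an open set $U_i\subset B_i$ with $\operatorname{Cap}_{\gamma,\omega,m}(U_i)<\varepsilon/N$ such that $\varphi|_{B_i\setminus U_i}$ is continuous, then $U:=\bigcup_i U_i$ does the job. On each $B_i$ I would pick a smooth local potential $\phi_i\in C^\infty(B_i)$ with $dd^c\phi_i\geq\gamma$; then $\tilde\varphi_i:=\varphi+\phi_i$ restricts to a genuine $(\omega,m)$-subharmonic function on $B_i$. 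Replacing $\tilde\varphi_i$ by the bounded truncation $\max(\tilde\varphi_i,-k)$ and using that the polar set $\{\tilde\varphi_i=-\infty\}$ is $m$-polar (hence has zero $\operatorname{Cap}_{\gamma,\omega,m}$), we reduce to the case where $\tilde\varphi_i$ is bounded on a slightly shrunken ball $B_i'\Subset B_i$.

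Next I would regularize $\tilde\varphi_i$ on $B_i'$ by a smooth decreasing sequence of $(\omega,m)$-subharmonic functions. Standard mollification $\tilde\varphi_i^{(\nu)}:=\tilde\varphi_i\ast\rho_{1/\nu}$ produces a decreasing family of smooth functions converging pointwise to $\tilde\varphi_i$; since $\omega$ is merely Hermitian, $\tilde\varphi_i^{(\nu)}$ may fail to be $(\omega,m)$-subharmonic, but this is corrected by adding a term of order $O(1/\nu)$ constructed from a local potential for the deviation $\omega-\omega_{\mathrm{euc}}$, yielding smooth $(\omega,m)$-subharmonic approximants $\tilde\varphi_i^{(\nu)}\searrow\tilde\varphi_i$ on $B_i'$. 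This Hermitian-type Richberg regularization, already carried out in \cite{KN25b}, is the only step where the non-closedness of $\omega$ produces real difficulty, and is the main obstacle of the argument.

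The analytic core is then to show that $\tilde\varphi_i^{(\nu)}\to\tilde\varphi_i$ in capacity on $B_i'$. For any test function $u\in\operatorname{SH}_m(X,\gamma,\omega)$ with $-1\leq u\leq 0$, a Chebyshev estimate gives
$$
\delta\int_{\{\tilde\varphi_i^{(\nu)}-\tilde\varphi_i>\delta\}\cap B_i'} H_m(u)\;\leq\;\int_{B_i'}\bigl(\tilde\varphi_i^{(\nu)}-\tilde\varphi_i\bigr)\,H_m(u).
$$
Integration by parts together with the Chern--Levine--Nirenberg inequality for the Hermitian Hessian operator (as developed in \cite{GN18,KN25b}) bound the right-hand side by $C\,\|\tilde\varphi_i^{(\nu)}-\tilde\varphi_i\|_{L^1(B_i)}$, with $C$ independent of the test function $u$. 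By monotone convergence the $L^1$-norm tends to zero, and taking the supremum over admissible $u$ yields
$$
\lim_{\nu\to\infty}\operatorname{Cap}_{\gamma,\omega,m}\bigl(\{\tilde\varphi_i^{(\nu)}-\tilde\varphi_i>\delta\}\cap B_i'\bigr)=0.
$$

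To assemble the quasi-continuity open set I would extract a subsequence $\nu_k$ with $\operatorname{Cap}_{\gamma,\omega,m}(\{\tilde\varphi_i^{(\nu_k)}-\tilde\varphi_i>2^{-k}\})<\varepsilon/(N\cdot 2^{k+1})$ and set $U_i:=\bigcup_k\{\tilde\varphi_i^{(\nu_k)}-\tilde\varphi_i>2^{-k}\}$. Then $\operatorname{Cap}_{\gamma,\omega,m}(U_i)<\varepsilon/N$, and on $B_i\setminus U_i$ the continuous functions $\tilde\varphi_i^{(\nu_k)}$ converge uniformly to $\tilde\varphi_i$, so $\varphi=\tilde\varphi_i-\phi_i$ is continuous there. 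Taking $U:=\bigcup_i U_i$ gives the desired open set of arbitrarily small capacity off of which $\varphi$ is continuous.
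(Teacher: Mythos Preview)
The paper itself does not prove this statement; it is quoted from \cite{KN25b} and \cite{KN25c} without argument. Your overall plan---localize to coordinate balls, reduce to bounded potentials, approximate by a decreasing sequence of smooth $(\omega,m)$-subharmonic functions, then use a Chebyshev estimate to upgrade a uniform integral bound to convergence in capacity---is precisely the Bedford--Taylor route followed in those references, so at the strategic level your proposal matches what is done there.

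There is, however, a genuine gap at the analytic core. You assert that integration by parts together with the CLN inequality gives
\[
\int_{B_i'}\bigl(\tilde\varphi_i^{(\nu)}-\tilde\varphi_i\bigr)\,H_m(u)\;\leq\;C\,\bigl\|\tilde\varphi_i^{(\nu)}-\tilde\varphi_i\bigr\|_{L^1(B_i)}
\]
with $C$ independent of the test function $u$. But CLN in the relevant form (e.g.\ \cref{CLN ieq} or its local analogue) applies to $(\omega,m)$-subharmonic integrands, whereas $h_\nu:=\tilde\varphi_i^{(\nu)}-\tilde\varphi_i$ is only a \emph{difference} of such functions. A single integration by parts transfers $dd^c$ onto $h_\nu$, producing a signed current whose total variation is merely bounded (by CLN applied to $\tilde\varphi_i^{(\nu)}$ and $\tilde\varphi_i$ separately), not small. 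In fact a linear bound in $\|h_\nu\|_{L^1}$ uniform in $u$ cannot hold in general: a narrow bump of height $1$ and arbitrarily small $L^1$-norm can be written as a difference of two uniformly bounded subharmonic functions, and one may choose $u$ with Hessian mass concentrated on the bump. The correct argument, as carried out in \cite{KN25b}, is an induction on the number of factors $\gamma_u$ appearing in $H_m(u)$: after one Stokes step the cutoff terms and the Hermitian torsion terms (from $d\omega$, $d\gamma$) are controlled via Cauchy--Schwarz and the inductive hypothesis, and the resulting gradient term $\int dh_\nu\wedge d^c h_\nu\wedge T$ is handled by a further integration by parts. This yields the qualitative conclusion $\sup_u\int_{B_i'}h_\nu H_m(u)\to 0$, which is what you actually need, but not the linear $L^1$ bound you wrote. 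A smaller point: your reduction to bounded $\tilde\varphi_i$ requires $\operatorname{Cap}_{\gamma,\omega,m}(\{\tilde\varphi_i<-k\})\to 0$ as $k\to\infty$ (this is \cref{decay of capacity}), not merely that the polar set has zero capacity.
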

\begin{definition}
A sequence of Borel functions $f_j$ is said to converge to $f$ in capacity on $X$ if for each fixed $\delta>0$, we have  $$\lim_{j\rightarrow +\infty} \operatorname{Cap}_{\gamma,\omega,m}\left( \left\{|f_j-f|>\delta \right \}\right)=0.$$
\end{definition}

The following convergence lemma established in \cite{PSWZ25} will be crucial in the sequel:
\begin{theorem}\cite[Theorem 3.13]{PSWZ25} \label{thm: weak convergence lemma}
     Let $U \subset \mathbb{C}^n$ be an open set. Suppose $\left\{f_j\right\}_j$ are uniformly bounded quasi-continuous functions which converge in capacity to another quasi-continuous function $f$ on U. Let $\left\{u_1^j\right\}_j,\left\{u_2^j\right\}_j, \ldots,\left\{u_m^j\right\}_j$ be uniformly bounded  $(\omega,m)$- subharmonic  functions on $\Omega$, converging in capacity to bounded $(\omega,m)$- subharmonic functions $ u_1, u_2, \ldots, u_m$ respectively. Then we have the following weak convergence of measures:
$$
f_j  dd^c u_1^j \wedge dd^c u_2^j \wedge \ldots \wedge dd^c u_m^j \wedge \omega^{n-m}\rightarrow f dd^c  u_1 \wedge dd^c u_2 \wedge \ldots \wedge dd^c u_m \wedge \omega^{n-m} .
$$
\end{theorem}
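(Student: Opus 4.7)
The plan is to prove the result in two stages. First I would establish the weak convergence $H^j := dd^c u_1^j \wedge \cdots \wedge dd^c u_m^j \wedge \omega^{n-m} \to H$ of the bare Hessian measures; then I would upgrade it to $f_j H^j \to f H$ (all weakly) using the quasi-continuity of $f$ together with $f_j \to f$ in capacity.

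For the first stage, set $\Omega_k^j := dd^c u_1^j \wedge \cdots \wedge dd^c u_k^j$ and let $\Omega_k$ denote the analogous limit current. By the inductive definition of \cref{def: Hessian measures in the local setting}, $\Omega_k^j = dd^c(u_k^j \, \Omega_{k-1}^j)$, so against any compactly supported smooth test form $\eta$,
\[
\langle \Omega_k^j, \eta \rangle = \int u_k^j \, dd^c\eta \wedge \Omega_{k-1}^j.
\]
I would prove $\Omega_k^j \to \Omega_k$ weakly by induction on $k$, the base case $k=1$ being immediate from the $L^1_{loc}$ convergence implied by capacity convergence and uniform boundedness. For the inductive step, split the right-hand side as $\int u_k\, dd^c\eta \wedge \Omega_{k-1}^j + \int (u_k^j - u_k)\, dd^c\eta \wedge \Omega_{k-1}^j$. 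The first piece converges to $\int u_k\, dd^c\eta \wedge \Omega_{k-1}$ after approximating the bounded quasi-continuous function $u_k$ by a continuous function off a small-capacity open set and invoking the inductive weak convergence. The second piece is split over $\{|u_k^j - u_k| \leq \delta\}$ (bounded by $\delta \|dd^c\eta\|_\infty \sup_j \|\Omega_{k-1}^j\|(K)$) and $\{|u_k^j - u_k| > \delta\}$, whose $\Omega_{k-1}^j$-mass is controlled uniformly in $j$ by a Chern--Levine--Nirenberg type bound $\|\Omega_{k-1}^j\|(E) \leq C \operatorname{cap}_m(E)^{\alpha}$ and which has arbitrarily small capacity by the hypothesis $u_k^j \to u_k$ in capacity. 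Finally, $H^j = \Omega_m^j \wedge \omega^{n-m}$, and wedging with the fixed smooth form $\omega^{n-m}$ preserves weak convergence.

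For the second stage, fix $\varepsilon>0$ and, invoking subadditivity of $\operatorname{cap}_m$ applied to the countable family $\{f, f_j, u_i, u_i^j\}$, choose a single open set $G$ with $\operatorname{cap}_m(G) < \varepsilon$ outside of which every function in that family is continuous. Extend $f|_{U\setminus G}$ to a bounded continuous function $\tilde f$ on $U$ and write
\[
\int \chi f_j H^j - \int \chi f H = \int \chi \tilde f(H^j - H) + \int \chi(f_j - \tilde f) H^j + \int \chi(\tilde f - f) H.
\]
The first term vanishes by stage one (since $\chi \tilde f$ is continuous); the third is controlled by $\|\tilde f\|_\infty H(G)$; and the middle one is split over $\{|f_j - f| \leq \delta\}\cap(U\setminus G)$ (small by choice of $\delta$) and its complement (small by $f_j \to f$ in capacity together with the uniform CLN bound applied to $H^j$).

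The main obstacle, appearing throughout both stages, is the uniform CLN estimate in the Hermitian (non-K\"ahler) setting. One proves it by iterated Stokes' theorem applied to the $d$-closed currents $\Omega_k^j$ (closed because $d \circ dd^c = 0$); the non-closed correction terms $d\omega, d^c\omega, dd^c\omega$ produced by $\omega^{n-m}$ not being closed are smooth and bounded, and are absorbed into the test data at each integration-by-parts step together with the uniform bound $|u_i^j| \leq M$. A closely related technical point is the simultaneous choice of the exceptional set $G$ on which all countably many quasi-continuous functions in play become continuous, which again relies on the subadditivity of $\operatorname{cap}_m$.
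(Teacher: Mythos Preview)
The paper does not prove this statement; it is quoted verbatim from \cite[Theorem 3.13]{PSWZ25} and used as a black box, so there is no in-paper argument to compare against.

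Your two-stage plan (first the bare Hessian measures, then multiply by $f_j$) and the inductive scheme via $\Omega_k^j = dd^c(u_k^j\,\Omega_{k-1}^j)$ are the standard route and are sound in outline; Stage~2 is fine as written. The gap is in the inductive step of Stage~1. You test $\Omega_k^j$ against an arbitrary smooth $(n-k,n-k)$-form $\eta$ and then need to control $\int_E dd^c\eta\wedge\Omega_{k-1}^j$ on a small-capacity set $E$. You invoke a ``CLN-type bound'' $\|\Omega_{k-1}^j\|(E)\le C\operatorname{cap}_m(E)^\alpha$, but two things fail here. First, CLN inequalities bound mass on fixed compacta, not on arbitrary Borel sets in terms of capacity; the capacity bound you actually want comes, for the full Hessian \emph{measure}, from the very definition of $\operatorname{cap}_m$ after a symmetrization trick, not from Stokes. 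Second and more seriously, for $m<n$ the current $\Omega_{k-1}^j$ is \emph{not} positive: $m$-subharmonicity only yields positivity after wedging with $\omega^{n-m}$ and further $m$-positive $(1,1)$-forms (e.g.\ $dd^cu$ can have a negative eigenvalue when $u$ is merely $2$-sh in $\mathbb{C}^3$). Hence you cannot pass from the signed measure $dd^c\eta\wedge\Omega_{k-1}^j$ to something dominated by $\Omega_{k-1}^j\wedge\omega^{n-k+1}$, and the claimed mass bound on small-capacity sets is unjustified. A concrete obstruction: for $\alpha\in\overline{\Gamma_m(\omega)}$ and a positive-definite $(1,1)$-form $\beta\neq\omega$, the top-degree form $\alpha\wedge\beta^{n-1}$ need not be nonnegative.

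The repair is to run the induction not on the closed currents $\Omega_k^j$ but on the positive objects $\Omega_k^j\wedge\alpha_{k+1}\wedge\cdots\wedge\alpha_m\wedge\omega^{n-m}$ (equivalently, test only against $\chi\,\alpha_{k+1}\wedge\cdots\wedge\alpha_m\wedge\omega^{n-m}$ with $\alpha_i\in\Gamma_m(\omega)$ and $\chi$ a test function). Then every measure in sight is positive, the small-set bound follows from $dd^cu_1\wedge\cdots\wedge dd^cu_m\wedge\omega^{n-m}\le (dd^c\sum u_i)^m\wedge\omega^{n-m}\le C\operatorname{cap}_m$, and the integration by parts against $dd^c\chi$ produces the torsion terms in $d\omega,\,d^c\omega,\,dd^c\omega$ that you correctly anticipated---only now they appear at every inductive step, not merely in a global CLN estimate. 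This is the approach of \cite{KN25b} and \cite{PSWZ25}.
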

Next, we recall the theory of Perron envelopes for $(\gamma,\omega,m)$- subharmonic functions developed in \cite{PSWZ25}:
\begin{definition}
    Let $h: X \rightarrow \mathbb{R}$ be a bounded measurable function, we set
$$
P_{\gamma,m}(h):=\sup \{u \in \operatorname{SH}_m(X, \gamma,\omega), u \leq h  \quad\text{quasi-everywhere}\}^*,
$$ here "quasi-everywhere" means outside a $m$-polar set.
\end{definition}
The most important properties of envelopes are the following two:
\begin{theorem}\cite[Theorem 2.7]{PSWZ25} \label{contact}
Let $\gamma\in\Gamma_m(\omega)$ be an $(\omega,m)$-positive form. If $h$ is quasi-continuous bounded function, then the complex Hessian measure $H_{m}(P_{\gamma,m}(h))=(\gamma+dd^cP_{\gamma,m}(h))^m\wedge\omega^{n-m}$ is concentrated on the contact set $\mathcal{C}_m=\left\{P_{\gamma, m}(h)=h\right\}$.
\end{theorem}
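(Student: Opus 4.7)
The plan is to run the standard local balayage argument, adapted to the Hermitian $(\gamma,\omega,m)$-setting. Set $u:=P_{\gamma,m}(h)$; by the very definition of the envelope $u\le h$ quasi-everywhere, so it suffices to prove
$$
H_m(u)\bigl(\{u<h\}\bigr)=0.
$$
Suppose, toward a contradiction, that this mass is positive. Using quasi-continuity of $u$ and $h$, choose an open set $U$ with $\operatorname{Cap}_{\gamma,\omega,m}(U)$ as small as we wish, on whose complement $u$ and $h$ are both continuous. The Hessian measure of a bounded $(\gamma,\omega,m)$-subharmonic function does not charge $m$-polar sets, and standard Chern--Levine--Nirenberg--type estimates show $H_m(u)$ puts arbitrarily small mass on sets of small capacity; hence we may arrange $H_m(u)(\{u<h\}\setminus U)>0$. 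Writing $\{u<h\}\setminus U=\bigcup_k \bigl(\{u\le h-1/k\}\setminus U\bigr)$ and invoking inner regularity of the finite Radon measure $H_m(u)$, we obtain some $\delta>0$ and a compact set $K\subset\{u\le h-\delta\}\setminus\overline{U}$ with $H_m(u)(K)>0$.

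Pick $x_0\in K$ in the support of $H_m(u)\!\restriction_K$ and a small coordinate ball $B\Subset X\setminus\overline{U}$ centered at $x_0$ on which $u$ is continuous and $h-u>\delta/2$. Solve on $B$ the degenerate Dirichlet problem
$$
(\gamma+dd^c v)^m\wedge\omega^{n-m}=0\ \text{in }B,\qquad v=u\ \text{on }\partial B,
$$
obtaining a continuous $v\in\operatorname{SH}_m(B,\gamma,\omega)\cap C(\overline{B})$; after shrinking $B$ if needed and using a smooth local potential for $\gamma$ on $B$, this reduces to the standard degenerate complex $m$-Hessian Dirichlet problem in the Hermitian setting solved by Perron's method in \cite{GN18,KN25b}. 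The comparison principle yields $v\ge u$ on $\overline{B}$, and since $H_m(u)(B)>0$ while $H_m(v)=0$, we cannot have $v\equiv u$; hence $v>u$ on a nonempty open subset of $B$. Shrinking $B$ further, continuity of $h-u$ guarantees $v\le u+\delta/2\le h$ throughout $\overline{B}$.

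Glue by setting $\tilde u:=\max(u,v)$ on $B$ and $\tilde u:=u$ on $X\setminus B$. Because $v=u$ on $\partial B$ and both $u,v$ are $(\gamma,\omega,m)$-subharmonic on $B$, the function $\tilde u$ is upper semicontinuous and $(\gamma,\omega,m)$-subharmonic on $X$; moreover $\tilde u\le h$ quasi-everywhere. However $\tilde u(x_0)>u(x_0)=P_{\gamma,m}(h)(x_0)$, contradicting the maximality of the envelope. Therefore $H_m(u)$ is concentrated on $\{u=h\}$.

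The main obstacle is the local solvability and boundary regularity of the degenerate Hessian Dirichlet problem, together with the verification that the glued function $\tilde u$ remains $(\gamma,\omega,m)$-subharmonic across $\partial B$; both points require care because $\gamma$ is not closed on a Hermitian manifold and admits only a smooth local potential. They are handled, however, by the foundational tools of \cite{GN18,KN25b,PSWZ25} already in force in this paper.
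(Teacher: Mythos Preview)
The paper does not supply a proof of this statement: it is quoted from \cite[Theorem~2.7]{PSWZ25} and used as a black box, so there is no in-paper argument to compare against.

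Your balayage strategy is the classical one and is sound in outline, but two steps are not justified as written. First, the clause ``Shrinking $B$ further, continuity of $h-u$ guarantees $v\le u+\delta/2$'' is a non sequitur: continuity of $h-u$ says nothing about $v-u$, and shrinking $B$ \emph{after} solving for $v$ changes $v$. The correct order is to fix $B$ small enough \emph{before} solving the Dirichlet problem so that the oscillation of $u$ on $\overline B$ is at most $\delta/4$, and then invoke a maximum-principle bound of the form $v\le\sup_{\partial B}u+O(\mathrm{diam}(B)^2)$. In the Hermitian $(\gamma,\omega,m)$ setting this holds because any $w\in\operatorname{SH}_m(B,\gamma,\omega)$ satisfies $\Delta_\omega w\ge -C$ with $C=C(\gamma,\omega)$, so $w+C'|z|^2$ is genuinely subharmonic on small coordinate balls and the classical maximum principle applies with a harmless quadratic error. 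Second, you conclude $\tilde u(x_0)>u(x_0)$, but you have only shown $v>u$ on \emph{some} nonempty open subset of $B$, not at $x_0$ specifically. This does not matter for the contradiction: since $\tilde u$ is a competitor in the envelope one has $\tilde u\le P_{\gamma,m}(h)=u$ pointwise, which already contradicts $\tilde u=v>u$ on an open set. Simply drop the reference to $x_0$ in the last line.
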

\begin{corollary}\cite[Corollary 2.2]{PSWZ25} \label{mass concentration 2}
    Let $\gamma$ be as in \cref{contact}, $u,v$ be bounded $(\gamma,\omega,m)$-subharmonic functions and let $w:=P_{\gamma,m}(u,v)=P_{\gamma,m}(\min(u,v))$ be the rooftop envelope of $u,v$. Then
    $$
H_{m}(w)\leq\mathds{1}_{\{w=u\}}H_{m}(u)+\mathds{1}_{\{w=v\}}H_{m}(v).
    $$
\end{corollary}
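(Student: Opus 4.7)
The plan is to combine the contact set concentration of \cref{contact} with a general comparison of Hessian measures on contact sets of ordered potentials. Since $(\gamma,\omega,m)$-subharmonic functions are quasi-continuous, the function $h := \min(u,v)$ is bounded and quasi-continuous, so \cref{contact} applied to $w = P_{\gamma,m}(u,v) = P_{\gamma,m}(h)$ gives that $H_m(w)$ is concentrated on the contact set $\mathcal{C}_m = \{w = \min(u,v)\}$. At any point of $\mathcal{C}_m$ either $u \leq v$, so $w = u$, or $v < u$, so $w = v$; hence $\mathcal{C}_m \subseteq \{w = u\} \cup \{w = v\}$ and
$$
H_m(w) \;\leq\; \mathds{1}_{\{w=u\}} H_m(w) + \mathds{1}_{\{w=v\}} H_m(w).
$$

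The crux is to upgrade the two occurrences of $H_m(w)$ on the right to $H_m(u)$ and $H_m(v)$ respectively, via the following general claim: if $\varphi, \psi \in \operatorname{SH}_m(X,\gamma,\omega) \cap L^\infty(X)$ with $\varphi \leq \psi$, then
$$
\mathds{1}_{\{\varphi = \psi\}} H_m(\varphi) \;\leq\; \mathds{1}_{\{\varphi = \psi\}} H_m(\psi). \qquad (*)
$$
To prove $(*)$, I would for each $\epsilon > 0$ set $\phi_\epsilon := \max(\varphi, \psi - \epsilon) \in \operatorname{SH}_m(X,\gamma,\omega) \cap L^\infty(X)$. On the open set $U_\epsilon := \{\varphi > \psi - \epsilon\}$, which contains $\{\varphi = \psi\}$, $\phi_\epsilon$ coincides with $\varphi$, so the Hessian measures agree: $H_m(\phi_\epsilon) = H_m(\varphi)$ on $U_\epsilon$ by the local character of the Hessian operator. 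As $\epsilon \searrow 0$, $\phi_\epsilon \nearrow \psi$, in particular in capacity, so \cref{thm: weak convergence lemma} yields the weak convergence $H_m(\phi_\epsilon) \to H_m(\psi)$.

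For any closed set $F \subseteq \{\varphi = \psi\}$ one has $F \subset U_\epsilon$ for every $\epsilon > 0$, hence $H_m(\phi_\epsilon)(F) = H_m(\varphi)(F)$; on the other hand, the Portmanteau inequality on closed sets gives $H_m(\psi)(F) \geq \limsup_\epsilon H_m(\phi_\epsilon)(F) = H_m(\varphi)(F)$. Inner regularity of the Radon measure $H_m(\varphi)$ extends this to every Borel subset of $\{\varphi = \psi\}$, proving $(*)$. Applying $(*)$ with $(\varphi,\psi) = (w,u)$ and $(w,v)$ completes the argument. The principal technical obstacle is this last passage: weak convergence of measures only controls open and closed sets, so one must manufacture an open neighborhood of $\{\varphi = \psi\}$ on which the Hessian measure of the approximants exactly matches $H_m(\varphi)$, and then transfer the bound to the Borel contact set via inner regularity.
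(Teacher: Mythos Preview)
The paper does not give its own proof of this corollary; it is quoted verbatim from \cite[Corollary 2.2]{PSWZ25}. Your overall architecture---concentrate $H_m(w)$ on the contact set via \cref{contact}, split $\{w=\min(u,v)\}\subset\{w=u\}\cup\{w=v\}$, and then invoke the comparison $(*)$ to replace $H_m(w)$ by $H_m(u)$ and $H_m(v)$ on the two pieces---is precisely the standard route and is almost certainly how the cited reference proceeds as well.

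There is, however, a genuine gap in your proof of $(*)$. You call $U_\epsilon=\{\varphi>\psi-\epsilon\}$ an ``open set'' and then use locality of the Hessian operator to conclude $H_m(\phi_\epsilon)=H_m(\varphi)$ there. But $\varphi$ and $\psi$ are merely upper semicontinuous (quasi-continuous), so $\varphi-\psi$ is in general neither upper nor lower semicontinuous, and $U_\epsilon$ is \emph{not} Euclidean open. The ordinary local character of $H_m$---agreement of Hessian measures on Euclidean open sets where two potentials coincide---therefore does not apply, and the equality $H_m(\phi_\epsilon)(F)=H_m(\varphi)(F)$ for closed $F\subset\{\varphi=\psi\}$ is unjustified as written.

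What you actually need is locality of the Hessian operator in an $m$-fine topology, analogous to the plurifine locality of Bedford--Taylor \cite{BT87}; if such a statement is available in the Hermitian Hessian setting (perhaps in \cite{KN25b} or \cite{PSWZ25}) you must cite it explicitly rather than appeal to an unspecified ``local character''. Alternatively, you can rescue the argument by an extra layer of approximation: take smooth $\varphi_j\searrow\varphi$ as in \cite[Theorem~9.4]{PSWZ25}, so that $\{\varphi_j>\psi-\epsilon\}$ \emph{is} genuinely open (since $\varphi_j-\psi$ is now lower semicontinuous), establish the equality of Hessian measures on that open set, and then pass to the limit in $j$ before letting $\epsilon\to0$. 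The Portmanteau and inner-regularity steps at the end are fine once this locality issue is resolved.
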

For further properties of capacities and envelopes, we refer to \cite{KN25c} and \cite{PSWZ25}.
\section{Weak convergence of complex hessian operators}\label{section 3}
In this section, fix a strictly $(\omega,m)$- positive form $\gamma\in\Gamma_m(\omega)$. Motivated by \cite{KN25a}, the main result in this section is as follows:
\begin{theorem}\label{main weak convergence}
    Let $\{u_j\}$ be a sequence of uniformly bounded $(\gamma,\omega,m)$-subharmonic functions. Assume that $H_{m}(u_j)\leq C_1H_{m}(\varphi_j)+C_2H_{m}(\psi_j)$ for some uniformly bounded sequence $\varphi_j\rightarrow\varphi \in\operatorname{SH}_m(X,\gamma,\omega)$ and $\psi_j\rightarrow\psi\in \operatorname{SH}_m(X,\gamma,\omega)$ in capacity. If $u_j\rightarrow u$ in $L^1(X)$, then a subsequence of $u_j$ converges in capacity to $u$. In particular, a subsequence of $H_{m}(u_j)$ converges weakly to $H_{m}(u)$.
\end{theorem}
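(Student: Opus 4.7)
My plan follows the Monge--Amp\`ere strategy of \cite{KN25a}, adapted to the $m$-Hessian and Hermitian framework. After extracting an $L^1$-subsequence with $u_j \to u$ pointwise almost everywhere, I would form the decreasing upper envelopes
\[
w_k := \bigl(\sup_{j \geq k} u_j\bigr)^*,
\]
each of which is $(\gamma,\omega,m)$-subharmonic and uniformly bounded. Combining the a.e.\ convergence with strong upper semicontinuity of $u$ gives $w_k \searrow u$ quasi-everywhere; together with quasi-continuity and a Hartogs-type argument, monotone decreasing sequences of bounded $(\gamma,\omega,m)$-subharmonic functions converge in capacity to their limit, so $w_k \to u$ in $\operatorname{Cap}_{\gamma,\omega,m}$. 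Since $u_j \leq w_j$, this immediately controls one direction:
\[
\operatorname{Cap}_{\gamma,\omega,m}(\{u_j > u + \delta\}) \leq \operatorname{Cap}_{\gamma,\omega,m}(\{w_j > u + \delta\}) \longrightarrow 0.
\]

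The core of the proof is the reverse estimate $\operatorname{Cap}_{\gamma,\omega,m}(\{u_j < u - \delta\}) \to 0$, which is where the Hessian measure hypothesis is used. Since $\varphi_j \to \varphi$ and $\psi_j \to \psi$ in capacity with uniform $L^\infty$ bounds, \cref{thm: weak convergence lemma} applied with $f_j \equiv 1$ yields the weak convergence $H_m(\varphi_j) \to H_m(\varphi)$ and $H_m(\psi_j) \to H_m(\psi)$ as Radon measures, so the sequence $\{H_m(u_j)\}$ is uniformly dominated by a weakly convergent sequence. I would argue by contradiction: if $\operatorname{Cap}_{\gamma,\omega,m}(\{u_j < u - \delta\}) \geq \epsilon$ infinitely often, select near-optimal test functions $\rho_j \in \operatorname{SH}_m(X,\gamma,\omega) \cap [-1,0]$ realizing
\[
\int_{\{u_j < u - \delta\}} H_m(\rho_j) \geq \tfrac{\epsilon}{2}.
\]
Comparing $u_j$ with the perturbation $v_j := u + \eta \rho_j$ for $0 < \eta < \delta$ (which is $((1+\eta)\gamma,\omega,m)$-subharmonic and satisfies $\{u_j < u-\delta\} \subset \{u_j < v_j\}$), together with the pointwise convexity bound $((1+\eta)\gamma + dd^c v_j)^m \wedge \omega^{n-m} \geq \eta^m H_m(\rho_j)$ coming from $(\omega,m)$-positivity, and a Hermitian comparison principle on $\{u_j < v_j\}$, should reduce the problem to showing that the mass of $C_1 H_m(\varphi_j) + C_2 H_m(\psi_j)$ on $\{u_j < v_j\}$ vanishes in the limit. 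The latter follows by combining the upper-half convergence from the first step, the $L^1$ convergence $u_j \to u$, the quasi-continuity of all potentials, and the domination principles \cref{domination for beta}--\cref{domination for beta 3} applied to the weak limit, producing a contradiction.

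Once $u_j \to u$ in capacity is established, a final application of \cref{thm: weak convergence lemma} with $f_j \equiv 1$ immediately gives $H_m(u_j) \to H_m(u)$ weakly. The main obstacle will be executing the comparison step on a compact Hermitian (non-K\"ahler) manifold: the classical comparison principle acquires torsion terms from $d\omega \neq 0$, which must be absorbed using the uniform $L^\infty$ bounds on $u_j, w_k, \rho_j$ and an iteration on the small parameter $\eta$. A secondary subtlety is the control of $\mathds{1}_{\{u_j < v_j\}}$ against the limiting measures $H_m(\varphi)$ and $H_m(\psi)$; since these measures may charge sets of small Lebesgue volume, this step requires a careful diagonal extraction exploiting both the already-proved upper half of the capacity convergence and the mass concentration property of \cref{mass concentration 2}.
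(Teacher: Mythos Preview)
Your plan diverges from the paper's argument, and the lower-direction step has a real gap. After the comparison principle you are left needing
\[
\int_{\{u_j<v_j\}}\bigl(C_1H_m(\varphi_j)+C_2H_m(\psi_j)\bigr)\longrightarrow 0,
\]
but the ingredients you list (upper-half capacity convergence, $L^1$ convergence, quasi-continuity, domination principles) do not give this. The set $\{u_j<v_j\}=\{u_j<u+\eta\rho_j\}$ is contained only in $\{u_j<u\}$, and nothing you have proved limits the $H_m(\varphi_j)$-mass of $\{u_j<u\}$: the upper-half bound concerns the disjoint region $\{u_j>u+\delta\}$, and $L^1$ convergence says nothing about how often $u_j$ dips \emph{slightly} below $u$. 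Already when $\varphi_j\equiv 0$ (so $H_m(\varphi_j)=\gamma^m\wedge\omega^{n-m}$ is a fixed smooth volume form) the integral in question is the volume of $\{u_j<u\}$, which need not tend to zero. The domination principles compare Hessian measures of two fixed potentials; they do not bound masses of moving sublevel sets. Even in the K\"ahler case, with no torsion, the comparison on $\{u_j<v_j\}$ does not close: this is a structural gap, not a Hermitian-torsion difficulty.

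The paper's proof is built around a different mechanism. It first establishes the integral estimate $\int_X|u-u_j|\,H_m(w_j)\to 0$ for any uniformly bounded $w_j\to w$ in capacity (\cref{lem: cap convergence lemma2}); the Hermitian torsion is handled here, via integration by parts and Cauchy--Schwarz bounds on the cross terms $dh_j\wedge d^cT$ and $h_j\,dd^cT$. Applied with $w_j=\varphi_j,\psi_j$ and the hypothesis $H_m(u_j)\le C_1H_m(\varphi_j)+C_2H_m(\psi_j)$, this yields $\int_X|u-u_j|\,H_m(u_j)\to 0$. Then \cref{characterzation of convergence in cap} upgrades this to capacity convergence along a subsequence via the rooftop-envelope argument of \cite{ALS24}: setting $v_{j,k}:=P_{\gamma,m}(\min(u_j,\dots,u_k))$ and using \cref{mass concentration 2} gives $\int_X|v_{j,k}-u|\,H_m(v_{j,k})\le\sum_{l=j}^k\int_X|u_l-u|\,H_m(u_l)$; passing to monotone limits and invoking the domination principle forces the increasing limit to equal $u$, whence $v_j\le u_j$ with $v_j\nearrow u$ delivers convergence in capacity. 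Thus \cref{mass concentration 2}, which you invoke only for a ``secondary subtlety'', is in fact the device that replaces the comparison step and makes the lower bound go through.
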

We first establish a couple of lemmas analogous to those in \cite{KN25a}, the local versions are essentially contained in \cite{KN25b}.
In the sequel, let
\[
P_{\gamma.0} := \left\{v \in \operatorname{SH}_m(X, \gamma,\omega) \cap L^{\infty}(X) \mid \sup_X v = 0 \right\}
\]
be a relatively compact subset in $\operatorname{SH}_m(X, \gamma,\omega)$. Following the proof in \cite{KN25a}, we have the following lemma:
\begin{lemma}\label{lem: convergence}
    Let $\mu$ be a non-m-polar Radon measure on $X$. Suppose $\{u_j\} \subset P_{\gamma,0}$ is a uniformly bounded sequence converging almost everywhere with respect to the Lebesgue measure to $u \in P_{\gamma,0}$. Then there exists a subsequence, still denoted by $\{u_j\}$, such that
    \[
    \lim_{j \to \infty} \int_X |u_j - u| \, d\mu = 0.
    \]
\end{lemma}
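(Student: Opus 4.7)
The plan is to follow the Kolodziej--Nguyen strategy of \cite{KN25a}, adapted to the $(\gamma,\omega,m)$-subharmonic setting. The main device is the upper envelope $\phi_j := (\sup_{k\ge j} u_k)^{*}$. Since each $u_k$ lies in $\operatorname{SH}_m(X,\gamma,\omega)\cap L^\infty(X)$ with uniform bound, $\phi_j$ is again $(\gamma,\omega,m)$-subharmonic and uniformly bounded, and the sequence is decreasing in $j$. The hypothesis $u_j\to u$ almost everywhere with respect to Lebesgue measure, together with the standard upper-semicontinuous regularization theorem (cf.\ \cite{GN18}), shows that $\phi_j\searrow u$ everywhere outside an $m$-polar set $E\subset X$.

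Because $\mu$ does not charge $m$-polar sets, $\phi_j\to u$ also holds $\mu$-almost everywhere. Bounded monotone convergence then gives $\int_X(\phi_j-u)\,d\mu\to 0$. Since both $u$ and $u_j$ are pointwise dominated by $\phi_j$, the elementary inequality
\[
|u_j-u|\le(\phi_j-u)+(\phi_j-u_j)
\]
reduces the problem to showing $\int_X(\phi_j-u_j)\,d\mu\to 0$, which is equivalent to the signed convergence $\int u_j\,d\mu\to\int u\,d\mu$. The upper bound $\limsup_j\int u_j\,d\mu\le\int u\,d\mu$ is immediate from $u_j\le\phi_j$ and the convergence of the envelope.

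The matching lower bound $\liminf_j\int u_j\,d\mu\ge\int u\,d\mu$ is the main obstacle. Here I would exploit quasi-continuity of $(\gamma,\omega,m)$-subharmonic functions: for $\eta>0$, choose an open $U\subset X$ with $\operatorname{Cap}_{\gamma,\omega,m}(U)<\eta$ such that $u$, all the $u_j$, and all the $\phi_j$ are continuous on the compact set $X\setminus U$, and such that $E\subset U$ (by outer regularity of the capacity). On $X\setminus U$, Dini's theorem upgrades the pointwise decrease $\phi_j\searrow u$ to uniform convergence, giving $u_j\le\phi_j\le u+o(1)$ uniformly off $U$. Combined with the $L^1(X,\omega^n)$-convergence of $u_j$ (which follows from a.e.\ convergence plus uniform boundedness), an Egorov-type extraction inside $X\setminus U$, followed by a diagonal argument letting $\eta\to 0$ and using the Radon property of $\mu$ together with the vanishing of $\mu$ on $m$-polar sets, yields the desired subsequential $L^1(\mu)$-convergence.

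The hardest part is precisely this last transfer: converting Egorov-type control on small Lebesgue-measure sets into control on small $\mu$-measure sets, for a general non-$m$-polar Radon $\mu$ that need not be absolutely continuous with respect to Lebesgue measure. The upper envelope construction only delivers the $(u_j-u)_+$ half of the estimate cleanly; obtaining the $(u-u_j)_+$ half requires the full interplay of quasi-continuity, capacity bounds, and the non-charging of $m$-polar sets by $\mu$, mirroring the technical core of \cite{KN25a}.
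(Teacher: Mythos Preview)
Your scaffolding matches the standard approach that the paper defers to (it simply cites \cite[Lemma~8.3, Corollary~8.4]{KN25b}): the decreasing envelope $\phi_j=(\sup_{k\ge j}u_k)^*$, the splitting $|u_j-u|=(\phi_j-u)+(\phi_j-u_j)$, and the fact that $\phi_j\searrow u$ off an $m$-polar set so that $\int(\phi_j-u)\,d\mu\to 0$ by dominated convergence. That half is fine, as is the reduction of the remaining term to the lower bound $\liminf_j\int u_j\,d\mu\ge\int u\,d\mu$.

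The gap is in how you propose to obtain that lower bound, and it is real. You invoke Egorov with respect to $\omega^n$ on $X\setminus U$ and then a diagonal argument in $\eta$. But Egorov only produces a set whose \emph{Lebesgue} complement in $X\setminus U$ is small; nothing ties that set to $\mu$. Letting $\eta\to 0$ does shrink $U$ to an $m$-polar set, hence $\mu(U)\to 0$ by the non-$m$-polar hypothesis and continuity from above---but the Egorov-exceptional set sits \emph{inside} $X\setminus U$ and is an unrelated Lebesgue-small set whose $\mu$-measure you have no control over. The two smallness notions never interact, so the diagonal extraction does not close the loop. You correctly flag exactly this in your final paragraph, yet the remedy you name (``Radon property of $\mu$ together with vanishing on $m$-polar sets'') only handles $\mu(U)$, not the $\mu$-mass of the Egorov residual. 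As written, the lower bound remains unproved. The argument in \cite{KN25b} does not route through Egorov-over-Lebesgue; one has to produce $\mu$-a.e.\ (indeed pointwise on each $K_k=X\setminus G_k$) convergence directly from quasi-continuity and the sub-mean-value structure of $(\omega,m)$-subharmonic functions, rather than trying to transfer Lebesgue-a.e.\ information across to a singular $\mu$.
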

\begin{proof}
    The proof is a trivial adaptation of \cite[Lemma 8.3]{KN25b} and \cite[Corollary 8.4]{KN25b}.
\end{proof}
The next lemma is a global version of \cite[Lemma 8.5]{KN25b}, the arguments there are readily applicable, we give a sketch of proof for the convenience of the reader.
\begin{lemma}\label{lem: cap convergence lemma2}
    Let $\{u_j\}$ be as in \cref{main weak convergence}, and let $\{w_j\} \subset P_{\gamma,0}$ be a uniformly bounded sequence that converges in capacity to $w \in P_{\gamma,0}$. Then,
    \[
    \lim_{j \to \infty} \int_X |u - u_j| H_{m}(w_j) = 0.
    \]
\end{lemma}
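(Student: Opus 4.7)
The plan is as follows. After passing to a subsequence, I may assume $u_j \to u$ almost everywhere on $X$ and set $\tilde{u}_j := \left(\sup_{k \geq j} u_k\right)^*$. Then $\{\tilde{u}_j\}$ is a uniformly bounded decreasing sequence in $\operatorname{SH}_m(X, \gamma, \omega)$ with $\tilde{u}_j \searrow u$ pointwise. By the standard fact for $m$-subharmonic potentials that pointwise decreasing convergence implies convergence in capacity (the local version is recalled in \cite{KN25b}), $\tilde{u}_j \to u$ with respect to $\operatorname{Cap}_{\gamma, \omega, m}$.

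Because $\tilde{u}_j \geq \max(u, u_j)$ pointwise, the elementary estimate $|u - u_j| \leq (\tilde{u}_j - u) + (\tilde{u}_j - u_j)$ holds, and integrating against $H_m(w_j)$ gives
\[
\int_X |u - u_j|\, H_m(w_j) \leq \int_X (\tilde{u}_j - u)\, H_m(w_j) + \int_X \tilde{u}_j\, H_m(w_j) - \int_X u_j\, H_m(w_j).
\]
For the first term on the right, the uniformly bounded quasi-continuous sequence $\tilde{u}_j - u$ converges to $0$ in capacity and $w_j \to w$ in capacity, so (after binomial expansion of $(\gamma + dd^c w_j)^m$) \cref{thm: weak convergence lemma} yields weak convergence of the measures $(\tilde{u}_j - u) H_m(w_j)$ to the zero measure; since $X$ is compact, testing against the constant $1$ gives convergence of total mass. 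The same argument identifies $\int_X \tilde{u}_j H_m(w_j) \to \int_X u\, H_m(w)$.

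It therefore remains to show $\int_X u_j H_m(w_j) \to \int_X u\, H_m(w)$, and this cannot be read off directly from \cref{thm: weak convergence lemma}, because convergence of $u_j$ in capacity is precisely the conclusion of \cref{main weak convergence} that we are working toward. My plan here is to apply Stokes' theorem on the closed manifold $X$ to shift the $dd^c w_j$ factors onto $u_j$, rewriting $\int_X u_j H_m(w_j)$ as a finite sum of mixed integrals pairing $w_j$ against $H_m(u_j)$ and lower-order currents involving $\gamma$ and $\omega^{n-m}$, plus torsion contributions arising from $d\omega \neq 0$. The hypothesis $H_m(u_j) \leq C_1 H_m(\varphi_j) + C_2 H_m(\psi_j)$ together with the weak convergences $H_m(\varphi_j) \to H_m(\varphi)$ and $H_m(\psi_j) \to H_m(\psi)$ (from \cref{thm: weak convergence lemma}), combined with the $L^1$ convergences $u_j \to u$ and $w_j \to w$, then let each rearranged integral pass to its analogue with $u, w, \varphi, \psi$ in place of the indexed potentials. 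The principal obstacle I expect is the careful bookkeeping of the torsion corrections produced by integration by parts in the Hermitian setting; these are of lower differential order and uniformly controlled by the $L^\infty$ bounds on the potentials and the uniformly bounded total masses of the Hessian measures involved, but managing them cleanly is the delicate step, and is the Hermitian analogue of the argument underlying \cite[Lemma 8.5]{KN25b}, globalized via Stokes on $X$.
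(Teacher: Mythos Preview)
Your reduction to the two terms is the same as the paper's (the paper uses $\phi_j=\max(u_j,u)$ rather than $\tilde u_j$, but the first integral is handled identically via \cref{thm: weak convergence lemma}). The genuine gap is in your treatment of the second term, i.e.\ in the claim that integrating by parts will let you show
\[
\int_X u_j\, H_m(w_j)\;\longrightarrow\;\int_X u\, H_m(w).
\]
After you move $dd^c$ factors from $w_j$ onto $u_j$, you obtain integrals of the form $\int_X w_j\,\gamma_{u_j}\wedge T_j$ (and, if you iterate, eventually $\int_X w_j\, H_m(u_j)$) plus torsion. The convergence in capacity of $w_j$ lets you replace $w_j$ by $w$ up to a vanishing error, using the domination $H_m(u_j)\le C_1 H_m(\varphi_j)+C_2 H_m(\psi_j)$; but you are then left with $\int_X w\, H_m(u_j)$, and passing to the limit here requires precisely the weak convergence $H_m(u_j)\to H_m(u)$ (or of the mixed Hessians with a $\gamma_{u_j}$ factor) that you do not yet have. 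The inequality hypothesis only bounds $H_m(u_j)$ from above; it does not identify its weak limit, and any cluster point could a priori be any measure dominated by $C_1 H_m(\varphi)+C_2 H_m(\psi)$. So the circularity is not broken.

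The paper avoids this by never integrating by parts on $u_j$. Instead it freezes a second index $k$ and estimates the \emph{difference}
\[
\int_X(\phi_j-u_j)\,H_m(w_j)-\int_X(\phi_j-u_j)\,H_m(w_k)
=\int_X(w_j-w_k)\,dd^c\bigl[(\phi_j-u_j)\,T\bigr],
\]
so that the integration by parts lands on $w_j-w_k$, which \emph{does} go to zero in capacity. The remaining $dh_j\wedge d^c h_j$ and torsion contributions are controlled by Cauchy--Schwarz and the Chern--Levine--Nirenberg estimates from \cite{KN25b} (no use of the domination hypothesis on $H_m(u_j)$ is needed here). Once the difference is small, one fixes $k_0$ and applies \cref{lem: convergence} to the single non-$m$-polar measure $H_m(w_{k_0})$ to conclude $\int_X|u-u_j|\,H_m(w_{k_0})\to 0$. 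This Cauchy-type comparison on the $w$-side is the missing idea in your proposal.
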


\begin{proof}
    Observe that
    \[
    |u_j - u| = (\max\{u_j, u\} - u_j) + (\max\{u_j, u\} - u).
    \]
    Let $\phi_j := \max\{u_j, u\}$ and $v_j := \left(\sup_{k \geq j} \phi_k\right)^*$. We have $\phi_j \geq u$ and $\{v_j\}_j$ decreases to $u$ pointwise. By \cite[Proposition 4.3]{KN25b} for any $\delta > 0$,
    \[
    \operatorname{Cap}_{\omega,m}\left(\{|\phi_j - u| > \delta\}\right) = \operatorname{Cap}_{\omega,m}\left(\{\phi_j > u + \delta\}\right) \leq \operatorname{Cap}_{\omega,m}\left(\{v_j > u + \delta\}\right) \to 0,
    \]
    hence $\phi_j\rightarrow u$ in capacity and it follows from \cref{thm: weak convergence lemma} that
    \begin{align*}
       \lim_{j\rightarrow\infty} \int_X (\phi_j - u)H_{m}(w_j)=0.
    \end{align*}
    We next turn to the estimate of the term  $\int_X (\phi_j - u_j)H_{m}(w_j)$. For $j > k$,
    \begin{align*}
        & \int_X (\phi_j - u_j)H_{m}(w_j) - \int_X (\phi_j - u_j) H_{m}(w_k)\\
        &= \int_X (\phi_j - u_j) dd^c (w_j - w_k) \wedge T \\
        &= \int_X (w_j - w_k) dd^c [(\phi_j - u_j) \wedge T] \\
    \end{align*}
    where $T = \sum_{s=0}^{m-1} (\gamma + dd^c w_j)^s \wedge (\gamma + dd^c w_k)^{m-1-s}\wedge\omega^{n-m}$ is a positive current. Set $h_j:=\phi_j-u_j$, we have
    \begin{align*}
        dd^c(h_jT)=dd^ch_j\wedge T+dh_j\wedge d^cT-d^ch_j\wedge dT+h_jdd^cT.
    \end{align*}
    We will deal with the four terms above respectively. For the first term, we can write
    \begin{align*}
\int_X(w_j-w_k)dd^ch_j\wedge T\leq\int_X|w_j-w_k|(\gamma_{w_j}+\gamma_{w_k})\wedge T\rightarrow0
    \end{align*}
    as $j,k\rightarrow\infty$. Where the convergence follows from \cref{thm: weak convergence lemma} and the assumption that $w_j\rightarrow w$ in capacity.

    Since the second term and the third term are mutually conjugate, we will only deal with the second term. We may rewrite the second term as $dh_j\wedge d^cT=dh_j\wedge d^c\gamma\wedge T^\prime+dh_j\wedge d^c\omega\wedge T^{\prime\prime}$, where $T^\prime$ is   a $(n-2,n-2)$-positive current of type $\gamma_{w_1}\wedge...\wedge\gamma_{w_{m-2}}\wedge\omega^{n-m}$ and  $T^{\prime\prime}$ looks like $\gamma_{w_1}\wedge...\wedge\gamma_{w_{m-1}}\wedge\omega^{n-m-1}$, which is a well defined current of order zero thanks to \cite{KN25b}. By the Cauchy-Schwarz inequality \cite[Lemma 2.3]{KN25b} and \cite[Corollary 2.5]{KN25b}, we deduce that
    \begin{align*}
        &\int_X(w_j-w_k)dh_j\wedge d^cT\\
        =&\int_X(w_j-w_k)dh_j\wedge d^c\omega\wedge T^{\prime\prime}+\int_X(w_j-w_k)dh_j\wedge d^c\gamma\wedge T^\prime\\
        \leq&C\int_X|w_j-w_k|dh_j\wedge d^ch_j\wedge\omega\wedge T^\prime\int_X|w_j-w_k|\omega^2\wedge T^\prime\\
        &+C\int_X|w_j-w_k|dh_j\wedge d^ch_j\wedge(\sum_{j=1}^{m-1}\gamma_{w_j})^{m-1}\wedge\omega^{n-m}\int_X|w_j-w_k|(\sum_{j=1}^{m-1}\gamma_{w_j})^{m-1}\wedge\omega^{n-m+1}.
    \end{align*}
   Note that \cite[Lemma 2.3]{KN25b} and \cite[Corollary 2.5]{KN25b} were stated for smooth forms, the general case follows easily by an approximation argument using \cite[Theorem 9.4]{PSWZ25} (note that here we need to use \cite[Proposition 3.20]{KN25b} for convergence of real currents of order zero). We remark that here the constant $C$ in \cite[Lemma 2.3]{KN25b} and \cite[Corollary 2.5]{KN25b} depends only on $n,m,\omega$ but not $\gamma$ (and of course $\gamma_{w_j}$), thus the limiting process does not cause any problems. For example, we know that the inequality
   \begin{align*}
       &\int_X(w_j-w_k)dh_j\wedge d^c\omega\wedge T^{\prime\prime}\\
       \leq&C\int_X|w_j-w_k|dh_j\wedge d^ch_j\wedge(\sum_{j=1}^{m-1}\gamma_{w_j})^{m-1}\wedge\omega^{n-m}\int_X|w_j-w_k|(\sum_{j=1}^{m-1}\gamma_{w_j})^{m-1}\wedge\omega^{n-m+1},
   \end{align*}
  is valid for smooth data $(h_j,w_j)$. Now we assume that each $w_j$ is smooth, since $h_j$ can be written as the difference of two $(\gamma,\omega,m)$-subharmonic functions, by approximating and using \cite[Proposition 3.20]{KN25b} together with \cref{thm: weak convergence lemma}, we get the desired inequality. Finally, when $w_j$ is merely bounded, we may as well proceed an approximation to conclude it.
   
   Since $h_j$ are uniformly bounded and $w_j\rightarrow w$ in capacity, it is easy to see from \cref{thm: weak convergence lemma} that the second term converges to $0$.

    Thanks to \cite[Corollary 2.4]{KN16}, the last term can be handled similarly as the previous terms, as described in \cite[Lemma 8.5]{KN25b}. Overall, we have proved that $ |\int_X (\phi_j - u_j)H_{m}(w_j) - \int_X (\phi_j - u_j) H_{m}(w_k)|\rightarrow0$ as $j,k\rightarrow\infty$. Now for each $\epsilon>0$, fix $k_0$ such that $| \int_X (\phi_j - u_j)H_{m}(w_j) - \int_X (\phi_j - u_j) H_{m}(w_k)|<\epsilon$ for any $j,k\geq k_0$ and finally we can estimate:
    \begin{align*}
        \int_X (\phi_j - u_j)H_{m}(w_j) &\leq \int_X (\phi_j - u_j)H_{m}(w_{k_0}) \\
        & \quad + \left|\int_X (\phi_j -u_j)H_{m}(w_j) - \int_X (\phi_j - u_j)H_{m}(w_{k_0})\right| \\
        &\leq \int_X |u - u_j| H_{m}(w_{k_0}) + \epsilon.
    \end{align*}
    Applying \cref{lem: convergence}, we conclude the proof.
\end{proof}
    To finish the proof of \cref{main weak convergence}, it remains to show the following criterion of convergence in capacity, which simultaneously generalize \cite[Lemma 2.4]{KN25a} and \cite[Proposition 2.5]{KN25b}:

\begin{lemma}\label{characterzation of convergence in cap}
    Let $\{u_j\}_j$ be a uniformly bounded sequence in $\operatorname{SH}_m(X, \gamma,\omega) \cap L^{\infty}(X)$ such that $u_j\rightarrow u\in\operatorname{SH}_m(X, \gamma,\omega) \cap L^{\infty}(X)$ in $L^1(X)$. Then, a subsequence of $\{u_j\}_j$ converges to $u$ in capacity if and only if
    $$
\lim_{j\rightarrow+\infty}\int_X|u-u_j|H_{m}(u_j)=0.
    $$
\end{lemma}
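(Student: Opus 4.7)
I split into the two directions. For the \emph{only if} direction, assume $u_j\to u$ in capacity. The uniformly bounded functions $|u-u_j|$ are quasi-continuous (difference of quasi-continuous functions) and converge to $0$ in capacity. Expanding $H_m(u_j)=(\gamma+dd^cu_j)^m\wedge\omega^{n-m}$ locally via the binomial identity from \cref{section 2} (using a smooth local potential $\phi$ of $\gamma$) writes it as a finite linear combination of measures of the form $H_k(u_j+\phi)\wedge(\gamma-dd^c\phi)^{m-k}$, so \cref{thm: weak convergence lemma} applies termwise after a partition of unity and yields $|u-u_j|H_m(u_j)\to 0$ weakly on $X$. Testing this weak convergence against the constant function $1$ on the compact manifold $X$ gives $\int_X|u-u_j|H_m(u_j)\to 0$.

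For the \emph{if} direction, assume $\int_X|u-u_j|H_m(u_j)\to 0$. I pass to a subsequence (still denoted $u_j$) converging almost everywhere to $u$ and bound the two one-sided exceptional sets in $\{|u_j-u|>\delta\}$ separately. The upper set $\{u_j>u+\delta\}$ requires only the $L^1$-convergence: the envelope $v_j:=(\sup_{k\geq j}u_k)^*$ is uniformly bounded, $(\gamma,\omega,m)$-subharmonic, and decreases quasi-everywhere to $(\limsup u_j)^*=u$, hence converges to $u$ in capacity by \cite[Proposition 4.3]{KN25b}. Since $u_k\leq v_j$ quasi-everywhere for $k\geq j$, monotonicity of the capacity gives $\operatorname{Cap}_{\gamma,\omega,m}(\{u_j-u>\delta\})\to 0$.

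The lower set $\{u_j<u-\delta\}$ is where the integral hypothesis enters, through a barrier construction. Put $K:=1+\sup_X|u|+\sup_{j,X}|u_j|<\infty$ and $\epsilon:=\delta/(2K)$, and for an arbitrary competitor $v\in\operatorname{SH}_m(X,\gamma,\omega)\cap L^\infty(X)$ with $-1\leq v\leq 0$ set
$$\psi:=(1-\epsilon)u+\epsilon v-\epsilon K.$$
This is $(\gamma,\omega,m)$-subharmonic as a convex combination, and a direct computation using $-1\leq v\leq 0$ and $|u|\leq K-1$ gives $u-\delta\leq\psi\leq u-\epsilon$. Consequently
$$\{u_j<u-\delta\}\subseteq\{u_j<\psi\}\subseteq\{u_j<u-\epsilon\},$$
while the expansion $\gamma+dd^c\psi=(1-\epsilon)(\gamma+dd^cu)+\epsilon(\gamma+dd^c v)$ yields $H_m(\psi)\geq\epsilon^m H_m(v)$. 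Combining a Ko\l odziej-type comparison inequality
$$(\star)\qquad \int_{\{u_j<\psi\}}H_m(\psi)\leq \int_{\{u_j<\psi\}}H_m(u_j)$$
with the Chebyshev estimate $\int_{\{u_j<u-\epsilon\}}H_m(u_j)\leq\epsilon^{-1}\int_X|u-u_j|H_m(u_j)$ produces
$$\int_{\{u_j<u-\delta\}}H_m(v)\leq\epsilon^{-m-1}\int_X|u-u_j|H_m(u_j)\xrightarrow[j\to\infty]{}0,$$
uniformly in $v$ in the capacity class, hence $\operatorname{Cap}_{\gamma,\omega,m}(\{u_j<u-\delta\})\to 0$.

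The principal obstacle is establishing the comparison inequality $(\star)$ in the non-K\"ahler setting. In the K\"ahler case it is immediate from the total-mass identity $\int_X H_m(\psi)=\int_X H_m(u_j)$, but for a general Hermitian $\omega$ this identity fails. I plan to recover $(\star)$ by applying the mass-concentration inequality \cref{mass concentration 2} to the rooftop envelope $w:=P_{\gamma,m}(u_j,\psi)$ and invoking the domination principles \cref{domination for beta}--\cref{domination for beta 3}. The residual Hermitian corrections, involving $d\omega$ and $dd^c\omega$, are of the same type as those already controlled by the Cauchy--Schwarz estimates of \cite[Lemma 2.3, Corollary 2.5]{KN25b} in the proof of \cref{lem: cap convergence lemma2}, and should be absorbed into the vanishing right-hand side of $(\star)$.
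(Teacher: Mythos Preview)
Your treatment of the ``only if'' direction and of the upper set $\{u_j>u+\delta\}$ is correct and matches what the paper does. The gap is in the lower set $\{u_j<u-\delta\}$, where everything hinges on the comparison inequality $(\star)$. You acknowledge that $(\star)$ is the principal obstacle, but your plan to recover it does not work. Applying \cref{mass concentration 2} to $w=P_{\gamma,m}(u_j,\psi)$ gives $H_m(w)\leq\mathds{1}_{\{w=u_j\}}H_m(u_j)+\mathds{1}_{\{w=\psi\}}H_m(\psi)$; on $\{u_j<\psi\}$ one has $w\leq u_j<\psi$, so this only bounds $H_m(w)$ by $H_m(u_j)$ there, which says nothing about $H_m(\psi)$. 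The domination principles compare functions globally, not measures on sublevel sets, so they do not yield $(\star)$ either. Finally, the Cauchy--Schwarz estimates from \cite{KN25b} that you invoke were used in \cref{lem: cap convergence lemma2} to show error terms vanish \emph{given} convergence in capacity of the relevant potentials; here you would need those estimates to \emph{prove} convergence in capacity, which is circular. In short, no clean Hermitian comparison principle of the form $(\star)$ is available in this setting, and your sketch does not supply one.

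The paper bypasses $(\star)$ entirely with an envelope argument in the spirit of \cite{ALS24}. After passing to a subsequence with $\int_X|u-u_j|H_m(u_j)<2^{-j}$, one sets $v_{j,k}:=P_{\gamma,m}(\min(u_j,\ldots,u_k))$ and $v_j:=\lim_k v_{j,k}=P_{\gamma,m}(\inf_{k\geq j}u_k)$. The mass-concentration inequality \cref{mass concentration 2}, iterated, gives $\int_X|v_{j,k}-u|H_m(v_{j,k})\leq\sum_{l=j}^k\int_X|u_l-u|H_m(u_l)<2^{-j+1}$; the monotone convergence theorems of \cite{KN25b} then pass this to $v_j$ and to $v:=\lim_j v_j$, yielding $\int_X|v-u|H_m(v)=0$. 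The domination principle \cref{domination for beta} (with $c=0$) forces $v\geq u$, while $v\leq u$ by construction, so $v=u$. Since $v_j\leq u_j$ and $v_j\nearrow u$, one concludes $u_j\to u$ in capacity via \cite[Corollary 4.11]{KN25b}. This route uses the rooftop envelope, but in a completely different way from what you proposed: the point is to manufacture a monotone lower barrier $v_j$ for $u_j$ whose Hessian integral against $|v_j-u|$ is controlled, not to compare Hessian masses on sublevel sets.
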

\begin{proof}
    We will use the technique of envelope theory developed by \cite{PSWZ25}, following the idea of \cite[Theorem 3.1]{ALS24}. If $u_j$ converges to $u$ in capacity, then \cref{thm: weak convergence lemma} directly implies that
     $$
\lim_{j\rightarrow+\infty}\int_X|u-u_j|H_{m}(u_j)=0.
    $$
    We now prove the reverse direction. Up to extracting a subsequence, we may assume that 
    $$
\int_X|u-u_j|H_{m}(u_j)<2^{-j}.
    $$
    Set $v_{j,k}:=P_{\gamma,m}(\min(u_j,...,u_k))$ for each $k\geq j$. It is clear that $v_{j,k}$ decreases to $v_j:=P_{\gamma,m}(\inf_{k\geq j}u_k)$ as $k\rightarrow\infty$ for each $j$ and we can further assume $v_j\nearrow v\in\operatorname{SH}_m(X, \gamma,\omega) \cap L^{\infty}(X)$ almost everywhere. It follows easily from \cref{mass concentration 2} that
    $$
\int_X|v_{j,k}-u|H_{m}(v_{j,k})\leq\sum_{l=j}^k\int_X|u_l-u|H_{m}(u_l)<2^{-j+1}
    $$
    Letting $k\rightarrow+\infty$ and using the decreasing convergence theorem \cite[Lemma 5.1]{KN25b} we obtain
    $$
\int_X|v_{j}-u|H_{m}(v_{j})=\lim_{k\rightarrow+\infty}\int_X|v_{j,k}-u|H_{m}(v_{j,k})\leq\sum_{l=j}^{+\infty}\int_X|u_l-u|H_{m}(u_l)<2^{-j+1}
    $$
    Then, the increasing convergence theorem \cite[Lemma 5.4]{KN25b} yields that
    $$
\int_X|v-u|H_{m}(v)=\lim_{j\rightarrow+\infty}\int_X|v_j-u|H_{m}(v_{j})=0.
    $$
    The domination principle (with $c=0$) \cref{domination for beta} tells us that $v\geq u$ and we also have $v\leq u$ by the definition of $v$, hence $v=u$. Since $v_j\nearrow u$ and $v_j\leq u_j$ by definition, it follows from \cite[Corollary 4.11]{KN25b} that $u_j\rightarrow u$ in capacity.
\end{proof}

\section{Hessian equations and Mixed type inequalities}\label{section 4}
As applications of the weak convergence \cref{main weak convergence} we solve some types of degenerate complex Hessian equations and using them to establish the mixed Hessian inequalities on compact Hermitian manifolds.

We first give an alternative proof of bounded solutions in \cite[Theorem 3.14]{KN16}, \cite[Theorem 9.1]{PSWZ25}.
\begin{theorem}\label{Hessian equation for m-positive gamma}
  Let $0\leq f\in L^p(X)$ with $p>\frac{n}{m}$ and $\int_Xf\omega^n>0$. Then there exists a unique constant $c>0$ and a function $u\in \operatorname{SH}_m(X,\gamma,\omega)\cap L^{\infty}(X)$ such that
        $$
H_m(u)=(\gamma+dd^cu)^m\wedge\omega^{n-m}=cf\omega^n.
        $$
\end{theorem}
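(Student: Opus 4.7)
The plan is a smooth approximation combined with the weak-continuity machinery of \cref{main weak convergence} and \cref{characterzation of convergence in cap}. I would first approximate $f$ by smooth strictly positive functions $f_j$ with $f_j\to f$ in $L^p(X,\omega^n)$ and $\int_X f_j\,\omega^n\to\int_X f\,\omega^n>0$. For each such $f_j$ the known smooth solvability of Hessian equations on Hermitian manifolds (Zhang, Sz\'ekelyhidi) yields a smooth $u_j\in\operatorname{SH}_m(X,\gamma,\omega)$ normalized by $\sup_X u_j=0$ together with a constant $c_j>0$ satisfying $H_m(u_j)=c_jf_j\,\omega^n$.

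Next I would establish the uniform bounds $0<c_0\le c_j\le c_1<\infty$ and $\|u_j\|_\infty\le C$. Since $\int_X H_m(u_j)=c_j\int_X f_j\,\omega^n$, a Stokes/integration-by-parts estimate controlling $\int_X H_m(u_j)$ in terms of $\|u_j\|_\infty$, combined with the Ko\l odziej-type $L^\infty$ estimate valid for right-hand side in $L^p$ with $p>n/m$, gives both bounds by an iterative bootstrap. \cref{L^1 compactness} then produces, after extraction, $u_j\to u$ in $L^1(X)$ with $c_j\to c>0$; the usc regularization of $u$ lies in $\operatorname{SH}_m(X,\gamma,\omega)\cap L^\infty(X)$.

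The heart of the argument is upgrading this $L^1$ convergence to convergence in capacity. Since $u,u_j$ are uniformly bounded, interpolation gives $\|u-u_j\|_{p/(p-1)}\to 0$, and H\"older's inequality together with the uniform $L^p$ bound on $f_j$ yields
\[
\int_X |u-u_j|\,H_m(u_j) \;=\; c_j\int_X |u-u_j|\,f_j\,\omega^n \;\le\; c_j\,\|u-u_j\|_{p/(p-1)}\,\|f_j\|_p \longrightarrow 0.
\]
By \cref{characterzation of convergence in cap} a subsequence of $u_j$ converges to $u$ in capacity, and then \cref{thm: weak convergence lemma} (or equivalently \cref{main weak convergence} with $\varphi_j=\psi_j=u_j$) gives $H_m(u_j)\rightharpoonup H_m(u)$. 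Since $c_jf_j\,\omega^n\to cf\,\omega^n$ strongly in $L^1$, this forces $H_m(u)=cf\,\omega^n$. Uniqueness of $c$ follows from \cref{domination for beta 3} applied in both directions, and uniqueness of $u$ (given $\sup_X u=0$) from the domination principle \cref{domination for beta}.

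The main obstacle I anticipate is disentangling the coupling between the uniform $L^\infty$ bound on $u_j$ and the two-sided bound on $c_j$: the Ko\l odziej estimate needs a uniform $L^p$ bound on $c_jf_j$, which requires an upper bound on $c_j$, while bounding $c_j$ above and away from $0$ from the total Hessian mass needs an a priori bound on $\|u_j\|_\infty$. Once this coupling is resolved, the rest of the proof is a clean application of the convergence-in-capacity criterion and entirely bypasses the variational machinery used in \cite{KN16,PSWZ25}.
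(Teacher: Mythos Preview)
Your overall strategy is exactly the paper's own proof: approximate $f$ by smooth $f_j>0$, invoke Sz\'ekelyhidi's smooth solvability to get $(u_j,c_j)$, bound $c_j$ and $\|u_j\|_\infty$ uniformly, pass to the limit via the H\"older estimate
\[
\int_X|u-u_j|\,H_m(u_j)=c_j\int_X|u-u_j|\,f_j\,\omega^n\le C\|u-u_j\|_{L^{p'}}\|f_j\|_{L^p}\to 0
\]
together with \cref{characterzation of convergence in cap}. The only substantive difference is how the bounds on $c_j$ are obtained, and here your anticipated ``coupling obstacle'' dissolves once you use the right tools. For the upper bound, Garding's inequality applied pointwise to the smooth equation gives $\gamma_{u_j}\wedge\omega^{n-1}\ge (c_jf_j)^{1/m}\omega^n$; integrating over $X$, the left side equals $\int_X\gamma\wedge\omega^{n-1}+\int_X u_j\,dd^c\omega^{n-1}$, and the torsion term is controlled by $\|u_j\|_{L^1}$, which is uniformly bounded by \cref{L^1 compactness} using only the normalization $\sup_X u_j=0$ --- no $L^\infty$ bound needed. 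For the lower bound, a subsolution (e.g.\ from \cite[Lemma 11.2]{PSWZ25}) combined with \cref{domination for beta 3} gives $c_j\ge c_0>0$ directly. With $c_j$ bounded above, the Ko\l odziej-type estimate \cite[Theorem 7.7]{PSWZ25} then yields the uniform $L^\infty$ bound on $u_j$ with no circularity.

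One small remark: the theorem asserts uniqueness only of the constant $c$, which indeed follows from \cref{domination for beta 3}; uniqueness of $u$ (up to additive constant) is a separate, more delicate result and is not part of the statement being proved here.
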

\begin{proof}
    Choose a sequence of smooth positive functions $0<f_j\in C^{\infty}(X)$ converges in $L^p(\omega^n)$ to $f$. By \cite[Proposition 21]{Szé18}, there exist constants $c_j>0$ and $u_j\in \operatorname{SH}_m(X,\gamma,\omega)\cap C^\infty(X)$ such that
    \begin{equation}
       H_{m}(u_j)=c_jf_j\omega^n,\quad \sup_Xu_j=0.
    \end{equation}
    As illustrated in \cite[Theorem 9.1]{PSWZ25} and \cite[Lemma 3.13]{KN16}, Garding's inequality yields an uniform upper bound $C$ for $c_j$, while the subsolution theorem \cite[Lemma 11.2]{PSWZ25} and the domination principle \cref{domination for beta 3} gives the lower bound of $c_j>0$. We can thus assume that $\lim_{j\rightarrow+\infty}c_j=c>0$. The $L^\infty$- estimate \cite[Theorem 7.7]{PSWZ25} implies that $u_j$ is uniformly bounded. We can furthermore write
    $$
\int_X|u_j-u|H_{m}(u_j)=\int_X|u-u_j|c_jf_j\omega^n\leq C\|u_j-u\|_{L^q(X)}\rightarrow0.
    $$
    Here we note that $u_j$ is uniformly bounded and hence $L^1$ convergence implies $L^q$ convergence, where $\frac{1}{p}+\frac{1}{q}=1$. \cref{characterzation of convergence in cap} yields that a subsequence of $u_j$ converges in capacity to $u$ and hence the weak convergence $H_{m}(u_j)\rightarrow H_{m}(u)$. 
\end{proof}
Almost the same argument as above we can derive bounded solutions of \cite[Theorem 9.3]{PSWZ25} and, in particular, \cite[Lemma 3.19]{KN16}:
\begin{theorem}\label{twist Hessian equation for m-positive gamma}
   Let $0\leq f\in L^p(X)$ with $p>\frac{n}{m}$ and $\int_Xf\omega^n>0$. Then there exists a unique function $u\in \operatorname{SH}_m(X,\omega,\omega)\cap L^{\infty}(X)$ such that
        $$
H_m(u)=(\omega+dd^cu)^m\wedge\omega^{n-m}=e^uf\omega^n.
        $$
\end{theorem}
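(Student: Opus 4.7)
The plan is to adapt the proof of \cref{Hessian equation for m-positive gamma} almost verbatim, with the indeterminate normalizing constants $c_j$ absorbed directly into the exponential factor $e^{u_j}$, which also provides a built-in a priori estimate that supplies uniqueness for free via \cref{domination for beta 2}.

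First, I would approximate $f$ by smooth positive functions $f_j \in C^{\infty}(X)$ with $f_j \to f$ in $L^p(X,\omega^n)$. A continuity-method argument analogous to \cite[Proposition 21]{Szé18}, with the straightforward modification to accommodate the $e^{u}$ term on the right-hand side (exactly as in \cite[Lemma 3.19]{KN16}), produces smooth strictly $(\omega,m)$-subharmonic solutions $u_j \in \operatorname{SH}_m(X,\omega,\omega) \cap C^{\infty}(X)$ of
\begin{equation*}
H_m(u_j) = e^{u_j} f_j \, \omega^n.
\end{equation*}
The next step is a uniform $L^{\infty}$ bound $\|u_j\|_{\infty} \leq M$. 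The upper bound on $\sup_X u_j$ is obtained by integrating the equation and combining $\int_X f_j\,\omega^n \geq c_0 > 0$ with a test against a suitable auxiliary potential, as in \cite[Lemma 3.19]{KN16}; once the upper bound is secured, $e^{u_j} f_j$ is uniformly bounded in $L^p(\omega^n)$, and the a priori $L^{\infty}$-estimate \cite[Theorem 7.7]{PSWZ25} yields the lower bound.

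With uniform $L^\infty$ control in hand, \cref{L^1 compactness} extracts a subsequence $u_j \to u$ in $L^1(X)$ with $u \in \operatorname{SH}_m(X,\omega,\omega) \cap L^\infty(X)$; uniform boundedness upgrades this to $L^q$-convergence for the conjugate exponent $\tfrac{1}{p} + \tfrac{1}{q} = 1$. By Hölder's inequality,
\begin{equation*}
\int_X |u_j - u| \, H_m(u_j) = \int_X |u_j - u|\, e^{u_j} f_j \, \omega^n \leq C\, \|u_j - u\|_{L^q(X)} \to 0.
\end{equation*}
Then \cref{characterzation of convergence in cap} yields a further subsequence converging in capacity to $u$, so $e^{u_j} \to e^u$ in capacity; combined with $f_j \to f$ strongly in $L^p$, \cref{thm: weak convergence lemma} gives $e^{u_j} f_j \omega^n \rightharpoonup e^u f \omega^n$, while \cref{main weak convergence} yields $H_m(u_j) \rightharpoonup H_m(u)$. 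Passing to the limit on both sides gives $H_m(u) = e^u f \omega^n$, and uniqueness is immediate from \cref{domination for beta 2}.

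The main obstacle is the uniform upper bound on $\sup_X u_j$. Unlike in \cref{Hessian equation for m-positive gamma}, no prescribed normalization $\sup_X u_j = 0$ is available, so the bound must be produced intrinsically from the equation; the nonclosedness of $\omega$ rules out the purely cohomological estimate on $\int_X H_m(u_j)$ available in the Kähler setting, forcing the recourse to integration against test functions in the spirit of \cite[Lemma 3.19]{KN16}. Once that estimate is in place, every remaining step is a direct transcription of the proof of \cref{Hessian equation for m-positive gamma}.
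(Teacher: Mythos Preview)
Your proposal is correct and follows essentially the same route as the paper: approximate $f$ by smooth positive $f_j$, solve the smooth twisted equation, extract uniform $L^\infty$ bounds (upper bound first from the equation, then lower bound from the $L^p$ a priori estimate), and pass to the limit via the $\int_X|u_j-u|H_m(u_j)\to0$ criterion exactly as in \cref{Hessian equation for m-positive gamma}. The paper's own proof is a one-line remark pointing to \cite[Theorem 9.3]{PSWZ25} for the uniform bound on $u_j$, whereas you invoke \cite[Lemma 3.19]{KN16}; these are the same circle of arguments, so there is no substantive difference.
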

\begin{proof}
    The argument is exactly the same as \cref{Hessian equation for m-positive gamma} except that we have to use the arguments of the proof in \cite[Theorem 9.3]{PSWZ25} to derive the uniform bound of $u_j$.
\end{proof}

We move on to solve a special type of Hessian equations, which will be crucial in the sequel:
\begin{proposition}\label{Hessian equation for fHm}
  Let $\mu=fH_{m}(\varphi)+gH_{m}(\psi)$ for two nonnegative functions $0\leq f,g\in L^\infty(X)$ and two bounded potentials $\varphi,\psi\in\operatorname{SH}_m(X, \gamma,\omega) \cap L^{\infty}(X)$. Moreover, assume that there exists a positive number $\delta>0$ such that $f\geq\delta>0$. Then, we can find a bounded solution $u\in\operatorname{SH}_m(X, \gamma,\omega) \cap L^{\infty}(X)$ solving the following equation:
    $$
H_{m}(u)=e^{u}\mu.
    $$
\end{proposition}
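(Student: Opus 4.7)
The plan is to solve the equation by smoothly approximating the potentials $\varphi,\psi$ (while keeping $f,g$ fixed) and then passing to the limit through \cref{main weak convergence}. First, take decreasing sequences $\varphi_j\searrow\varphi$ and $\psi_j\searrow\psi$ of smooth $(\gamma,\omega,m)$-subharmonic functions, obtained from the Hermitian regularization of \cite[Theorem 9.4]{PSWZ25}. Writing $h_j$ and $k_j$ for the smooth nonnegative densities of $H_m(\varphi_j)$ and $H_m(\psi_j)$ relative to $\omega^n$, the approximate measure
\[
\mu_j:=fH_m(\varphi_j)+gH_m(\psi_j)=(fh_j+gk_j)\,\omega^n
\]
has an $L^\infty(X)$ density, so the general-$\gamma$ counterpart of \cref{twist Hessian equation for m-positive gamma} (cf.\ \cite[Theorem 9.3]{PSWZ25}) produces a bounded solution $u_j\in\operatorname{SH}_m(X,\gamma,\omega)\cap L^\infty(X)$ of $H_m(u_j)=e^{u_j}\mu_j$.

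The crucial step is then to establish a uniform $L^\infty$ bound $\|u_j\|_\infty\leq C$. The upper bound follows cleanly from the hypothesis $f\geq\delta>0$: setting $v_j:=\varphi_j+C_0$ with $C_0$ chosen so that $e^{-\varphi_j-C_0}\leq\delta$ uniformly in $j$ (for instance $C_0=\sup_j\|\varphi_j\|_\infty-\ln\delta$, which is finite by monotonicity), a pointwise check gives
\[
e^{-v_j}H_m(v_j)=e^{-\varphi_j-C_0}h_j\,\omega^n\leq(fh_j+gk_j)\,\omega^n=e^{-u_j}H_m(u_j),
\]
so \cref{domination for beta 2} yields $u_j\leq v_j\leq C_1$. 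For the lower bound, the uniform boundedness of $\varphi_j,\psi_j$ implies $\mu_j\leq B\operatorname{Cap}_{\gamma,\omega,m}$ uniformly in $j$, and combining with the upper bound on $u_j$ gives $H_m(u_j)\leq Be^{C_1}\operatorname{Cap}_{\gamma,\omega,m}$; a Kolodziej-type $L^\infty$ estimate (e.g.\ the one implicit in \cite[Theorem 7.7]{PSWZ25}) then yields $u_j\geq-C_2$.

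With the uniform bound in hand,
\[
H_m(u_j)=e^{u_j}\mu_j\leq e^{C_1}\bigl(\|f\|_\infty H_m(\varphi_j)+\|g\|_\infty H_m(\psi_j)\bigr),
\]
and the capacity convergence $\varphi_j\to\varphi$, $\psi_j\to\psi$ (from the monotone decrease, cf.\ \cite[Corollary 4.11]{KN25b}) put us in the situation of \cref{main weak convergence}. Extracting a subsequence, $u_j\to u\in\operatorname{SH}_m(X,\gamma,\omega)\cap L^\infty(X)$ in capacity and $H_m(u_j)\to H_m(u)$ weakly. To identify the weak limit of the right-hand side, split $e^{u_j}\mu_j$ into $e^{u_j}fH_m(\varphi_j)$ and $e^{u_j}gH_m(\psi_j)$ and apply \cref{thm: weak convergence lemma} to each summand; the potential non-quasicontinuity of $f,g$ is handled by replacing them with quasi-continuous representatives via Luzin's theorem for capacity, which is permissible since $H_m(\varphi)$ and $H_m(\psi)$ do not charge $m$-polar sets.

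The main obstacle I anticipate is the uniform $L^\infty$ lower bound in the second step: while the upper bound exploits the strict positivity $f\geq\delta>0$ directly through the domination principle, the lower bound rests on the capacity domination of $\mu_j$ combined with Kolodziej's pluripotential machinery for the twisted equation in the Hermitian setting, which is the deepest analytic input in the argument.
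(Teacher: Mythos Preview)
Your overall scheme---regularize $\varphi,\psi$ by smooth decreasing sequences, solve the twisted equation with $L^\infty$ density, extract uniform bounds, pass to the limit via \cref{main weak convergence}---is exactly the paper's approach, and your upper bound via \cref{domination for beta 2} (comparing to $\varphi_j+C_0$ using $f\geq\delta$) is also the paper's.

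The genuine gap is in your lower bound. From the uniform boundedness of $\varphi_j,\psi_j$ you only obtain $\mu_j\leq B\operatorname{Cap}_{\gamma,\omega,m}$, i.e.\ capacity domination with exponent $\tau=1$. The Ko\l odziej iteration behind estimates like \cref{hessian a priori estimate} requires $\tau>1$: from \cref{capacity level set lemma} one gets $t\,a(s)\leq C\,a(s+t)^\tau$ with $a(s)=\operatorname{Cap}(U(\epsilon,s))^{1/m}$, and this only sums when $\tau>1$. With $\tau=1$ the iteration is vacuous and no uniform lower bound follows. (The reference to \cite[Theorem 7.7]{PSWZ25} does not help either, as that is the $L^p$-density estimate and the densities $fh_j+gk_j$ are not uniformly in any $L^p$.)

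The paper avoids this entirely by using the domination principle a second time. Since $f,g\in L^\infty$ and all mixed wedge products $\gamma_{\varphi_j}^k\wedge\gamma_{\psi_j}^{m-k}\wedge\omega^{n-m}$ are positive for smooth $m$-positive forms, one has
\[
fH_m(\varphi_j)+gH_m(\psi_j)\;\leq\; \|f\|_\infty H_m(\varphi_j)+\|g\|_\infty H_m(\psi_j)\;\leq\; C\,H_m\!\Bigl(\tfrac{\varphi_j+\psi_j}{2}\Bigr),
\]
so $e^{-u_j}H_m(u_j)\leq e^{-(\frac{\varphi_j+\psi_j}{2}-\log C')}H_m\bigl(\frac{\varphi_j+\psi_j}{2}-\log C'\bigr)$ and \cref{domination for beta 2} gives $u_j\geq\frac{\varphi_j+\psi_j}{2}-\log C'$ directly. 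This is much more elementary than a capacity estimate and bypasses the obstruction above.

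A smaller remark: your Luzin-for-capacity fix for the non-quasicontinuity of $f,g$ is not quite right as stated (an arbitrary bounded Borel function need not admit a quasi-continuous representative), but the paper itself simply cites \cref{thm: weak convergence lemma} at this point without further comment, so this is a shared technical wrinkle rather than a defect specific to your argument.
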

\begin{proof}
    It follows from \cite[Theorem 9.4]{PSWZ25} that we can find decreasing sequences of smooth $(\gamma,\omega,m)$-subharmonic functions $\varphi_j\searrow\varphi$ and $\psi_j\searrow\psi$. Since $H_{m}(\varphi_j)$ and $H_{m}(\psi_j)$ are smooth volume forms, we can invoke \cref{twist Hessian equation for m-positive gamma} to find $u_j\in\operatorname{SH}_m(X, \gamma,\omega) \cap L^{\infty}(X)$ solving
    $$
H_{m}(u_j)=e^{u_j}(fH_{m}(\varphi_j)+gH_{m}(\psi_j))
    $$
    Since $\varphi_j,\psi_j,f,g$ are all uniformly bounded, we can write
    $$e^{-u_j}H_{m}(u_j)\leq Ce^{-\frac{\varphi_j+\psi_j}{2}}H_{m}(\frac{\varphi_j+\psi_j}{2})=e^{-(\frac{\varphi_j+\psi_j}{2}-\log C)}H_{m}(\frac{\varphi_j+\psi_j}{2}-\log C).$$
    The domination principle \cref{domination for beta 2} yields that $u_j\geq\frac{\varphi_j+\psi_j}{2}-\log C$. Since we have assumed that $f\geq\delta>0$, we can also write
    $$
e^{-u_j}H_{m}(u_j)\geq\delta H_{m}(\varphi_j)\geq C\delta e^{-\varphi_j}H_{m}(\varphi_j)=e^{-(\varphi_j-\log C\delta)}H_{m}(\varphi_j-\log C\delta).
    $$
    This implies that $u_j\leq\varphi_j-\log C\delta$ and hence $u_j$ is uniformly bounded. Assume $u_j\rightarrow u$ in $L^1(X)$, then \cref{main weak convergence} (applied for $\gamma=\gamma=\omega$) gives that $u_j\rightarrow u$ in capacity and hence we conclude by \cref{thm: weak convergence lemma}. 
\end{proof}

We are now in a position to prove the mixed type inequalities on compact Hermitian manifolds, we will mainly follow the ideas in \cite{DL15}.
\begin{lemma}\label{mix type lemma}
    Let $u,v,\varphi,\psi_1,\psi_2\in\operatorname{SH}_m(X,\gamma,\omega)\cap L^{\infty}(X)$ and $0\leq f,g,h_1,h_2\in L^\infty(X)$ be non-negative bounded functions on $X$ such that
    $$
H_{m}(u)=fH_{m}(\varphi)+h_1H_{m}(\psi_1),\quad H_{m}(v)=gH_{m}(\varphi)+h_2H_{m}(\psi_2).
    $$
    Then for each $1\leq k\leq m-1$,
    $$
\gamma_u^k\wedge\gamma_v^{m-k}\wedge\omega^{n-m}\geq f^{\frac{k}{m}}g^{\frac{m-k}{m}}H_{m}(\varphi).
    $$
\end{lemma}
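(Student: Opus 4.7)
The plan is to adapt the smooth-approximation strategy of \cite{DL15} to the Hermitian setting, combining the smooth existence theorem of \cite{Szé18} with the pointwise Newton--Maclaurin / Alexandrov--Fenchel inequality for $m$-positive Hermitian forms, and then passing to the limit via \cref{main weak convergence} and \cref{thm: weak convergence lemma}.

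\textbf{Smooth approximation and existence.} First choose smooth decreasing sequences $\varphi_j \searrow \varphi$, $\psi_1^j \searrow \psi_1$, $\psi_2^j \searrow \psi_2$ in $\operatorname{SH}_m(X, \gamma, \omega) \cap C^\infty(X)$ via \cite[Theorem 9.4]{PSWZ25}, together with smooth positive approximants $f_j, g_j, h_{1,j}, h_{2,j}$ of $f, g, h_1, h_2$, uniformly bounded in $L^\infty$ and convergent a.e. Apply \cite[Proposition 21]{Szé18} (as in the proof of \cref{Hessian equation for m-positive gamma}) to produce smooth $u_j, v_j \in \operatorname{SH}_m(X, \gamma, \omega) \cap C^\infty(X)$ and normalization constants $a_j, b_j > 0$ with
\begin{equation*}
H_m(u_j) = a_j\bigl(f_j H_m(\varphi_j) + h_{1,j} H_m(\psi_1^j) + \epsilon_j \omega^n\bigr),
\end{equation*}
and the analogous equation for $v_j$, where $\epsilon_j \searrow 0$ ensures strict positivity of the right-hand side. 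After fixing a suitable normalization, Garding's inequality provides uniform upper bounds on $a_j, b_j$, and \cite[Theorem 7.7]{PSWZ25} yields uniform $L^\infty$-bounds on $u_j, v_j$.

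\textbf{Pointwise mixed inequality.} Since $\gamma_{u_j} = \gamma + dd^c u_j$ and $\gamma_{v_j} = \gamma + dd^c v_j$ are smooth $m$-positive forms, the Newton--Maclaurin / Alexandrov--Fenchel inequality for mixed discriminants of $m$-positive Hermitian matrices yields, at each point of $X$,
\begin{equation*}
\bigl(\gamma_{u_j}^k \wedge \gamma_{v_j}^{m-k} \wedge \omega^{n-m}\bigr)^m \geq \bigl(H_m(u_j)\bigr)^k \bigl(H_m(v_j)\bigr)^{m-k}
\end{equation*}
interpreted as densities with respect to $\omega^n$. Combined with the pointwise lower bounds $H_m(u_j) \geq a_j f_j H_m(\varphi_j)$ and $H_m(v_j) \geq b_j g_j H_m(\varphi_j)$, obtained by dropping the remaining nonnegative terms in the smooth equations, this yields
\begin{equation*}
\gamma_{u_j}^k \wedge \gamma_{v_j}^{m-k} \wedge \omega^{n-m} \geq (a_j f_j)^{k/m}(b_j g_j)^{(m-k)/m} H_m(\varphi_j)
\end{equation*}
as smooth $(n,n)$-forms, hence as an inequality of positive Radon measures on $X$.

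\textbf{Passage to the limit.} After extracting a subsequence, $u_j \to u^*$ in $L^1(X)$; since $H_m(u_j)$ is dominated by a sum of Hessian measures of $\varphi_j, \psi_1^j$ (which converge in capacity to $\varphi, \psi_1$ by monotonicity and \cite[Corollary 4.11]{KN25b}), \cref{main weak convergence} upgrades this to convergence in capacity. Then $H_m(u^*) = a^*(fH_m(\varphi) + h_1 H_m(\psi_1)) = a^* H_m(u)$ for $a^* = \lim a_j$, and two applications of \cref{domination for beta 3} to the pairs $(u, u^*)$ and $(u^*, u)$ force $a^* = 1$ and $u^* = u$ (with matching normalization). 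The same argument gives $b_j \to 1$ and $v_j \to v$ in capacity. Applying \cref{thm: weak convergence lemma} to both sides of the pointwise inequality above then delivers the desired
\begin{equation*}
\gamma_u^k \wedge \gamma_v^{m-k} \wedge \omega^{n-m} \geq f^{k/m} g^{(m-k)/m} H_m(\varphi).
\end{equation*}
The main obstacle is justifying the interchange of limits with the mixed Hessian product and pinning down the subsequential limits $u^*, v^*$ with the normalization constants $a^* = b^* = 1$; both are precisely the content of the weak continuity statements \cref{main weak convergence} and \cref{thm: weak convergence lemma} together with the domination principle \cref{domination for beta 3}.
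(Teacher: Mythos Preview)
Your attempt to collapse the argument into a single approximation step has a genuine gap at the identification $u^*=u$ (and likewise $v^*=v$). Applying \cref{domination for beta 3} twice to $H_m(u^*)=a^*H_m(u)$ indeed forces $a^*=1$, but that corollary says nothing about the potentials themselves: from $H_m(u^*)=H_m(u)$ and matching sup-normalization you are invoking a uniqueness statement for bounded solutions of $H_m(\cdot)=\mu$ on a Hermitian manifold. The available uniqueness result (\cite[Theorem 15.1]{PSWZ25}) requires the measure to have a density bounded below by a positive constant, which is precisely the hypothesis $f\geq\delta>0$ that you have not assumed. The paper's proof is structured in four steps exactly to work around this: Step~1 imposes $\min(f,g)\geq\delta>0$ with $\varphi,\psi_1,\psi_2$ smooth so that uniqueness applies; Steps~2--4 replace the bare equation by the exponentially twisted one $H_m(u_j)=e^{u_j-u}[\cdots]$, so that \cref{domination for beta 2} (rather than a uniqueness theorem) identifies the limit as $u$ directly; Step~4 finally removes the lower bound on $f,g$ by a further cut-off.

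There is also a secondary issue on the right-hand side. To pass $(a_jf_j)^{k/m}(b_jg_j)^{(m-k)/m}H_m(\varphi_j)$ to its limit you appeal to \cref{thm: weak convergence lemma}, but that result requires the coefficient functions to be quasi-continuous and to converge in capacity. Generic $L^\infty$ approximants $f_j\to f$ a.e.\ do not satisfy this, and $f$ itself need not be quasi-continuous. This is why the paper separates the quasi-continuous case (Step~2) from the general bounded case (Step~3, where $f_j$ is taken continuous and converging in $L^1(H_m(\varphi))$). Finally, you only claim upper bounds on $a_j,b_j$; without a positive lower bound (obtained in the paper via the subsolution theorem and \cref{domination for beta 3}) the limiting inequality could degenerate.
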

\begin{proof}
    \textbf{Step 1.} We first assume that $\min(f,g)\geq\delta>0$ and that $\varphi,\psi_1,\psi_2$ are smooth strictly $(\gamma,\omega,m)$- subharmonic. Let $f_j,g_j,h_1^j,h_2^j$ be smooth positive approximations of $f,g,h_1,h_2$ in $L^p(X)$ respectively for some $p<\frac{n}{m}$. Applying \cite[Proposition 3.21]{Szé18} we can find $u_j\in\operatorname{SH}_m(X,\gamma,\omega)\cap C^{\infty}(X)$ with $\sup_Xu_j=\sup_Xu$ and $c_j>0$ solving
    $$
H_{m}(u_j)=c_jf_jH_{m}(\varphi)+c_jh_1^jH_{m}(\psi_1).
    $$
    A standard argument using Garding's inequality and the uniform estimates for Hessian equations (see \cite[Theorem 9.1]{PSWZ25}) gives that $c_j\rightarrow c>0$ and that $\{u_j\}_j$ is uniformly bounded. Since $\sup_Xu_j=\sup_Xu$, we can extract a subsequence of $u_j$ such that $u_j\rightarrow\tilde{u}\in\operatorname{SH}_m(X,\gamma,\omega)\cap L^{\infty}(X)$ in $L^1(X)$. \cref{main weak convergence} yields that $u_j\rightarrow\tilde{u}$ in capacity and hence
    $$
H_{m}(\tilde{u})=cfH_{m}(\varphi)+ch_1H_{m}(\psi_1).
    $$
    Combined with the equation for $u$, we must have $c=1$ by the domination principle \cref{domination for beta 3} and hence $\tilde{u}=u$ by the uniqueness of the solution (see \cite[Theorem 15.1]{PSWZ25}), note that we can indeed apply that uniqueness result since we have assumed that $f\geq\delta>0$. Do the same thing for $v$ we will obtain a smooth sequence of functions $v_j$ converges in capacity to $v$ satisfying 
    $$
H_{m}(v_j)=b_jf_jH_{m}(\varphi)+b_jh_2^jH_{m}(\psi_2).
    $$
    with $b_j>0$ and $\lim_{j\rightarrow+\infty}b_j=1$.
    Finally, Garding's inequality implies that
    $$
\gamma_{u_j}^k\wedge\gamma_{v_j}^{m-k}\wedge\omega^{n-m}\geq\min(c_j,b_j)f_j^{\frac{k}{m}}g_j^{\frac{m-k}{m}}H_{m}(\varphi).
    $$
    Letting $j\rightarrow+\infty$ we get the desired inequality.
    
    \textbf{Step 2.} Assume that $\min(f,g)\geq\delta>0$ and that $f,g$ are quasi-continuous (with respect to $\operatorname{Cap}_{\gamma,\omega,m}$) on $X$. As in \cref{Hessian equation for fHm}, we choose sequences of smooth and strictly $(\gamma,\omega,m)$- subharmonic functions $\varphi_j,\psi_1^j,\psi_2^j$  decreasing to $\varphi,\psi_1,\psi_2$ respectively. We may then use \cref{twist Hessian equation for m-positive gamma} to find $u_j\in\operatorname{SH}_m(X,\gamma,\omega)\cap L^{\infty}(X)$ solving
    $$
H_{m}(u_j)=e^{u_j-u}[fH_{m}(\varphi_j)+h_1H_{m}(\psi_1^j)].
    $$
    Since $\varphi_j$ is uniformly bounded and $\delta\leq f\leq C$, exactly the same argument as in \cref{Hessian equation for fHm} shows that $u_j$ is uniformly bounded and hence converges in capacity to a function $u_\infty\in\operatorname{SH}_m(X,\gamma,\omega)\cap L^{\infty}(X)$ by \cref{main weak convergence}. \cref{thm: weak convergence lemma} yields that 
    $$
e^{-u_\infty}H_{m}(u_\infty)=e^{-u}H_{m}(u).
    $$
    Hence $u_\infty=u$ by the domination principle \cref{domination for beta 2}. Do the same thing for $v$ we obtain a sequence $v_j$ converges in capacity to $v$. Applying Step 1 for $u_j,v_j$ we get
$$
\gamma_{u_j}^k\wedge\gamma_{v_j}^{m-k}\wedge\omega^{n-m}\geq e^{\frac{k(u_j-u)}{m}}e^{\frac{(m-k)(v_j-v)}{m}}f^{\frac{k}{m}}g^{\frac{m-k}{m}}H_{m}(\varphi_j).
$$
Letting $j\rightarrow\infty$ we conclude the proof of this step by using \cref{thm: weak convergence lemma}.

\textbf{Step 3.} Assume only $\min(f,g)\geq\delta>0$. Choosing two continuous uniformly bounded sequences $f_j,g_j$ such that $\min(f_j,g_j)\geq\delta$ and that $f_j,g_j$ converge in $L^1(X,H_{m}(\varphi))$ to $f,g$ respectively. Thanks to \cref{Hessian equation for fHm}, we can find $u_j$ solving
$$
H_{m}(u_j)=e^{u_j-u}[f_jH_{m}(\varphi)+h_1H_{m}(\psi_1)].
$$
The same argument as in Step 2 gives that $u_j\rightarrow u$ in capacity and the result in this step follows easily from Step 2.

\textbf{Step 4.} In general, set $f_j:=\max(f,\frac{1}{j})$ and $g_j:=\max(g,\frac{1}{j})$ and use \cref{Hessian equation for fHm} to solve
$$
H_{m}(u_j)=e^{u_j-u}[f_jH_{m}(\varphi)+h_1H_{m}(\psi_1)].
$$
Then 
$$
e^{-u_j}H_{m}(u_j)\geq e^{-u}[fH_{m}(\varphi)+h_1H_{m}(\psi_1)]=e^{-u}H_{m}(u)
$$
and hence $u_j\leq u$ by \cref{domination for beta 2}. On the other hand, the same argument as in \cref{Hessian equation for fHm} shows that $u_j\geq\frac{\varphi+\psi_1}{2}-C$ and hence $u_j$ is uniformly bounded. We do the same thing for $v$ and applying \cref{main weak convergence} to get $u_j\rightarrow u$ and $v_j\rightarrow v$ in capacity. The result follows from Step 3 and a limiting process easily.
\end{proof}

\begin{theorem}\label{global mix type}
    Let $\mu$ be a positive Radon measure on $X$ such that $\mu$ is absolutely continuous with respect to the Hessian measure $H_m(\varphi)$ for some $\varphi\in\operatorname{SH}_m(X,\gamma,\omega)\cap L^{\infty}(X)$. Let $u,v\in\operatorname{SH}_m(X,\gamma,\omega)\cap L^{\infty}(X)$ be such that
    $$
H_m(u)\geq f\mu,\quad H_m(v)\geq g\mu,
    $$
    where $0\leq f,g\in L^1(\mu)$. Then, for each $1\leq k\leq m-1$, we have
    $$
\gamma_u^k\wedge\gamma_v^{m-k}\wedge\omega^{n-m}\geq f^{\frac{k}{m}}g^{\frac{m-k}{m}}\mu.
    $$
\end{theorem}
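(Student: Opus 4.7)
The plan is to reduce \cref{global mix type} to the already-proved \cref{mix type lemma} by means of a tautological Radon--Nikodym decomposition in which I take $\psi_1 = u$ and $\psi_2 = v$.

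I would first treat the bounded case, assuming $f, g \in L^\infty(X)$ and that $\mu = \chi H_m(\varphi)$ with density $\chi \in L^\infty(X)$. Since $f\mu \leq H_m(u)$ as measures, Radon--Nikodym produces a Borel function $h_1$ with $0 \leq h_1 \leq 1$ such that $f\mu = h_1 H_m(u)$; write $\tilde h_1 := 1 - h_1 \in L^\infty(X)$. Setting $F := f\chi \in L^\infty(X)$ one then has the identity
\[
H_m(u) = F\, H_m(\varphi) + \tilde h_1\, H_m(u),
\]
which is exactly the structural decomposition required by \cref{mix type lemma} with $\psi_1 := u$. Performing the analogous construction for $v$ with $\psi_2 := v$, $G := g\chi$, and $\tilde h_2 := 1 - h_2$ where $g\mu = h_2 H_m(v)$, \cref{mix type lemma} applies and yields
\[
\gamma_u^k \wedge \gamma_v^{m-k} \wedge \omega^{n-m} \geq F^{\frac{k}{m}} G^{\frac{m-k}{m}} H_m(\varphi) = f^{\frac{k}{m}} g^{\frac{m-k}{m}} \chi\, H_m(\varphi) = f^{\frac{k}{m}} g^{\frac{m-k}{m}} \mu,
\]
using $\chi^{k/m + (m-k)/m} = \chi$.

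To drop the boundedness hypotheses I would truncate: set $f_N := \min(f, N)$, $g_N := \min(g, N)$, $\chi_N := \min(\chi, N)$, and $\mu_N := \chi_N H_m(\varphi)$. Since $f_N \mu_N \leq f\mu \leq H_m(u)$ and likewise $g_N \mu_N \leq H_m(v)$, the bounded case applies to the truncated data and gives
\[
\gamma_u^k \wedge \gamma_v^{m-k} \wedge \omega^{n-m} \geq f_N^{\frac{k}{m}} g_N^{\frac{m-k}{m}} \chi_N\, H_m(\varphi).
\]
The density $f_N^{k/m} g_N^{(m-k)/m} \chi_N$ increases pointwise to $f^{k/m} g^{(m-k)/m} \chi$ as $N \to \infty$, so monotone convergence on Borel sets delivers the desired global bound $\geq f^{\frac{k}{m}} g^{\frac{m-k}{m}} \mu$.

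The main subtlety to check is that the Radon--Nikodym density $h_1$ can indeed be represented as an $L^\infty(X)$ function in the sense required by \cref{mix type lemma}: this is automatic from $0 \leq f\mu \leq H_m(u)$, where one simply takes the canonical $[0,1]$-valued Borel representative of $d(f\mu)/dH_m(u)$ and extends by zero off the support of $H_m(u)$; the resulting measure $(1-h_1) H_m(u)$ is independent of this extension. Beyond this bookkeeping no real obstacle remains, since all the analytic content has been packaged into \cref{mix type lemma}.
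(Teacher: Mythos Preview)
Your argument is correct and is genuinely shorter than the paper's. Both proofs reduce to \cref{mix type lemma} and finish by truncation, but the route into that lemma differs. The paper, after reducing to $\mu=H_m(\varphi)$ and assuming $f,g$ bounded, manufactures approximants by solving the auxiliary equations
\[
H_m(u_j)=e^{j(u_j-u)}\bigl[\delta H_m(u)+(1-\delta)f\mu\bigr]
\]
via \cref{Hessian equation for fHm}, uses the domination principle to pin $u\le u_j\le u-\tfrac{1}{j}\log\delta$ so that $u_j\to u$ uniformly, applies \cref{mix type lemma} to $(u_j,v_j)$, and passes to the double limit $j\to\infty$, $\delta\to0$. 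Your observation is that this entire approximation is unnecessary: since \cref{mix type lemma} places no restriction on $\psi_1,\psi_2$ beyond bounded $(\gamma,\omega,m)$-subharmonicity, the tautological Radon--Nikodym splitting
\[
H_m(u)=f\chi\,H_m(\varphi)+(1-h_1)H_m(u),\qquad h_1=\frac{d(f\mu)}{dH_m(u)}\in[0,1],
\]
already places $u$ itself in the hypotheses of the lemma with $\psi_1=u$, and likewise for $v$ with $\psi_2=v$. The lemma then gives the bounded case in one line, and the monotone truncation $f_N,g_N,\chi_N\nearrow f,g,\chi$ finishes as you indicate. What you gain is the elimination of a PDE-solving step and two limits; what the paper's route gains is perhaps only that it never feeds $u$ back into the lemma as $\psi_1$, which some readers might find conceptually cleaner, but there is no mathematical obstruction to doing so since the proof of \cref{mix type lemma} treats $\psi_1,\psi_2$ as fixed data throughout.
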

\begin{proof}
    By the Radon-Nikodym theorem we can write $\mu=hH_m(\varphi)$ for some $h\in L^1(H_m(\varphi))$. We can thus assume without loss of generality that $\mu=H_m(\varphi)$ for some $\varphi\in\operatorname{SH}_m(X,\gamma,\omega)\cap L^{\infty}(X)$. We first treat the case where $f,g$ are bounded. Fix $\delta>0$, for each $j\geq1$, we use \cref{Hessian equation for fHm} to solve
    $$
H_m(u_j)=e^{j(u_j-u)}[\delta H_m(u)+(1-\delta)f\mu].
    $$
    The domination principle \cref{domination for beta 2} easily gives that $u\leq u_j\leq u-j^{-1}\log\delta$ and hence $u_j$ converges uniformly to $u$ on $X$. Do the same thing for $v$ to get a sequence $v_j$ converges uniformly to $v$. Applying \cref{mix type lemma} we can write
    $$
\gamma_{u_j}^k\wedge\gamma_{v_j}^{m-k}\wedge\omega^{n-m}\geq e^{\frac{kj(u_j-u)}{m}}e^{\frac{(m-k)j(v_j-v)}{m}}(1-\delta)f^{\frac{k}{m}}g^{\frac{m-k}{m}}\mu\geq(1-\delta)f^{\frac{k}{m}}g^{\frac{m-k}{m}}\mu,
    $$
  where the second inequality is because $u_j\geq u$ and $v_j\geq v$. Letting $j\rightarrow\infty$ we obtain
    $$
\gamma_{u}^k\wedge\gamma_{v}^{m-k}\wedge\omega^{n-m}\geq (1-\delta)f^{\frac{k}{m}}g^{\frac{m-k}{m}}\mu.
    $$
    We can then let $\delta\rightarrow0$ to conclude the proof.

    Finally, when $f,g\in L^1(H_m(\varphi))$, setting $f_j:=\min(f,j)$ and $g_j:=\min(g,j)$. Applying the last step and letting $j\rightarrow\infty$ we get the desired result.
\end{proof}
As in \cite{DL15}, we illustrate that the local version of mixed Hessian inequalities can be derived from its global version. We first give an extension lemma of local plurisubharmonic functions, which is probably well-known:

\begin{lemma}\label{extension to global}
    Let $B$ be an open subset in $X$ which is biholomorphic to a ball in $\mathbb{C}^n$. Given a function $\rho\in \operatorname{PSH}(B)\cap L^{\infty}_{loc}(B\backslash\{0\})$, there is a large constant $A>0$, a smaller ball $B_1\subset \subset B$ and a function $\hat{\rho}\in \operatorname{PSH}(X,A\omega)\cap L^{\infty}_{loc}(X\backslash\{0\})$ such that $\hat{\rho}|_{B_1}=\rho|_{B_1}$.
\end{lemma}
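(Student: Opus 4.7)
The approach is a max-type gluing construction, adapted to the possible singularity of $\rho$ at $0$. Working in the local chart, fix concentric coordinate balls $B_1\subset\subset B_2\subset\subset B$ of radii $r_1<r_2<1$, centered at $0$. Since $\rho$ is upper semi-continuous, $\sup_{\overline{B_2}}\rho$ is finite; after subtracting a constant (restored at the end), assume $\rho\leq -1$ on $\overline{B_2}$. Choose a smooth cut-off $\chi\in C^\infty(X)$ with $\chi\equiv 1$ on $\overline{B_1}$ and $\operatorname{supp}\chi\subset\subset B_2$.

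Introduce an auxiliary barrier with a logarithmic pole at $0$. For a large constant $K>0$ define
$$
\sigma(x):=\begin{cases} K\chi(x)\log(|x|/r_2) & x\in B,\\ 0 & x\in X\backslash B_2. \end{cases}
$$
Then $\sigma$ is smooth on $X\backslash\{0\}$, non-positive, vanishes outside a neighborhood of $\overline{B_1}$, and goes to $-\infty$ only at $0$. On $\overline{B_1}$ we have $\sigma=K(\log|x|-\log r_2)$, which is plurisubharmonic; on $B_2\backslash\overline{B_1}$, $\sigma$ is smooth so $dd^c\sigma$ is a bounded $(1,1)$-form; it vanishes on $X\backslash B_2$. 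Consequently $dd^c\sigma\geq -C\omega$ on $X\backslash\{0\}$ for some $C=C(K,\chi,\omega)$, and taking $A\geq C$ gives $\sigma\in\operatorname{PSH}(X,A\omega)$ (extension across the isolated polar point $\{0\}$ being automatic). By choosing $K$ larger than the relevant Lelong-type data of $\rho$ at $0$, and then shrinking $r_1$, one arranges $\sigma(z)<\rho(z)$ pointwise for every $z\in B_1\backslash\{0\}$: near $0$ this follows from $\rho(z)-K\log|z|\to+\infty$ as $z\to 0$ whenever $K$ exceeds the directional Lelong numbers of $\rho$ at $0$, while on the annulus $\overline{B_1}\backslash B_\epsilon$ it follows from the local boundedness $\rho\in L^{\infty}_{loc}(B\backslash\{0\})$ together with $\sigma\leq K\log(r_1/r_2)$ being arbitrarily negative for $K$ large.

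Now define
$$
\hat\rho_0(x):=\begin{cases} \max(\rho(x),\sigma(x)) & x\in B_2,\\ \sigma(x) & x\in X\backslash B_2. \end{cases}
$$
Because $\chi$ vanishes in a neighborhood of $\partial B_2$, $\sigma\equiv 0$ there and $\max(\rho,\sigma)=\sigma$ (using $\rho\leq -1$); hence $\hat\rho_0$ coincides with $\sigma$ on an open set containing $\partial B_2$, which makes it globally well-defined and $A\omega$-plurisubharmonic on $X$. On $B_1$, the inequality $\sigma<\rho$ off $0$ and $\sigma(0)=-\infty\leq\rho(0)$ yield $\hat\rho_0=\rho$ pointwise. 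Restoring the initial shift produces the required $\hat\rho\in\operatorname{PSH}(X,A\omega)\cap L^{\infty}_{loc}(X\backslash\{0\})$ with $\hat\rho|_{B_1}=\rho|_{B_1}$. The main technical obstacle is the pointwise comparison $\sigma<\rho$ on $B_1\backslash\{0\}$, where one must control the singularity of $\rho$ in every direction approaching $0$; the freedom in the statement to choose both $A$ (hence $K$) and the radius of $B_1$ accommodates whatever singular behavior $\rho$ may exhibit.
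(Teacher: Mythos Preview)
Your construction has a genuine gap at the comparison step. You need $\sigma\leq\rho$ pointwise on all of $B_1$ in order to conclude $\hat\rho_0|_{B_1}=\rho|_{B_1}$, and you justify this near $0$ by asserting that $\rho(z)-K\log|z|\to+\infty$ once $K$ exceeds the ``directional Lelong numbers'' of $\rho$. This is false in general: Lelong numbers control only $\liminf_{z\to 0}\rho(z)/\log|z|$, not a pointwise lower bound. For a concrete obstruction, take points $p_k\to 0$ in $\mathbb{C}^n$, tiny radii $\epsilon_k>0$, summable weights $m_k>0$, and set
\[
\rho(z)=\sum_{k\geq 1} m_k\,\max\bigl(\log|z-p_k|,\ \log\epsilon_k\bigr).
\]
Each summand is continuous and plurisubharmonic, and the series converges locally uniformly on $B\setminus\{0\}$, so $\rho\in\operatorname{PSH}(B)\cap C(B\setminus\{0\})$. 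Yet $\rho(p_k)\leq m_k\log\epsilon_k + O(1)$, and choosing $\epsilon_k$ so small that $m_k|\log\epsilon_k|\big/|\log|p_k||\to\infty$ (e.g.\ $m_k=k^{-2}$, $|p_k|=2^{-k}$, $\epsilon_k=e^{-k^4}$) forces $\rho(p_k)/\log|p_k|\to+\infty$. For such $\rho$ no finite $K$ yields $\rho(z)\geq K\log|z|-C$ near $0$, and shrinking $r_1$ does not help because the bad points $p_k$ accumulate at the origin.

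The paper avoids this issue by performing the max-gluing only on an annulus away from $0$. It uses a \emph{smooth} bounded psh exhaustion $\tau$ of $B$ (so $B=\{\tau<0\}$) and defines $\hat\rho=\rho$ on $B_1$, $\hat\rho=\max(\rho,C\tau)$ on $B_2\setminus B_1$, and $\hat\rho=C\tau$ on $B\setminus B_2$, followed by a cutoff to globalize. The only comparisons needed are $\rho\geq C\tau$ on $\partial B_1$ (trivial: $\rho$ is bounded there and $\tau<0$, so take $C$ large) and $\rho\leq C\tau$ on $\partial B_2$ (trivial: take $B_2$ close to $\partial B$ where $\tau\approx 0>-c>\rho$). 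No information about the singularity of $\rho$ at $0$ is ever used. Your logarithmic pole in $\sigma$ is therefore both the source of the difficulty and unnecessary; if you simply set $\hat\rho_0=\rho$ on $B_1$ directly and use $\max(\rho,\sigma)$ only on the annulus $B_2\setminus B_1$, the argument goes through with any quasi-psh barrier---no pole required.
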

\begin{proof}
    By shrinking $B$ and adding a constant, we can assume without loss of generality that $\rho<-c<0$ in $B$, where $c>0$ is a positive constant. Choose a smooth plurisubharmonic exhaustion function $\tau$ of the ball $B$ (which is defined in a neighborhood of $\overline{B}$), that is, $B=\{\tau<0\}$. We first choose a ball $B_1\subset\subset B$ and a large constant $C>0$ such that $u\geq C\tau$ on $\partial B_1$ and then a ball $B_2$ such that $B_1\subset\subset B_2\subset\subset B$ and $\rho\leq C\tau$ on $\partial B_2$. This is possible since we have assumed that $u<-c<0$ in $B$. We then let 
$$
\hat{\rho}(z)=
\begin{cases}
\rho(z),  & z\in B_1 \\
\max(u(z),C\tau(z)),    & z\in B_2\backslash B_1 \\
C\tau(z), & z\in B\backslash B_2
\end{cases}.
$$
We have thus get a plurisubharmonic function $\hat{\rho}\in \operatorname{PSH}(B)$ such that $\hat{\rho}|_{B_1}=\rho$ and $\hat{\rho}$ is smooth in $B\backslash B_2$.

Next, we choose a cutoff function $\chi\in C^{\infty}(X)$ such that $\chi\equiv1$ in a neighborhood of $\overline{B_2}$ and $\operatorname{Supp}\chi\subset B$. It follows easily that there is a large constant $A>0$ such that $\chi\cdot u$ satisfies our conditions.

\end{proof}

\begin{theorem}\label{local mix type}
    Let $(B,\omega)$ be a small ball in $\mathbb{C}^n$ equipped with a Hermitian metric $\omega$ and let $\chi_1,...,\chi_m$ be smooth $(1,1)$- forms in $B$. Let $u_j$ be bounded $(\chi_j,\omega,m)$-subharmonic functions such that $(\chi_j+dd^cu_j)^m\wedge\omega^{n-m}\geq f_j\mu$, $1\leq j\leq m$. Where $0\leq f_j\in L^1(\mu)$ and $\mu$ is a positive Radon measure absolutely continuous with respect to the Hessian measure $(dd^c\varphi)^m\wedge\omega^{n-m}$ for some $\varphi\in\operatorname{SH}_m(B,\omega)\cap L^\infty(B)$. Then
    $$
(\chi_1+dd^cu_1)\wedge...\wedge (\chi_m+dd^cu_m)\wedge\omega^{n-m}\geq(f_1...f_m)^{\frac{1}{m}}\mu.
    $$
\end{theorem}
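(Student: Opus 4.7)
The plan is to deduce the local inequality from its global counterpart (\cref{global mix type}) through a compactification of $B$ inside $\mathbb{CP}^n$, following the Dinew-Lu philosophy \cite{DL15}. As remarked in the introduction, $\mathbb{CP}^n$ is well suited to the Hermitian setting: it contains $B$ as an affine chart, and it carries the Fubini-Study form $\omega_{FS}$, which can dominate any smooth $(1,1)$-form after multiplication by a large constant. Concretely, I would extend $\omega$, the forms $\chi_j$, the potentials $u_j$, $\varphi$, and the measure $\mu$ to global data on $\mathbb{CP}^n$ that agree with the originals on a slightly smaller ball, apply the global mixed type inequality there, and then restrict.

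First, I would fix concentric balls $B_1 \subset\subset B_2 \subset\subset B$ and realize $B$ as an affine chart of $\mathbb{CP}^n$; it suffices to prove the inequality on $B_1$, since $B_1$ can be chosen arbitrarily inside $B$. After picking $\rho \in C^\infty_c(B_2)$ with $\rho \equiv 1$ near $\overline{B_1}$, I would construct a global Hermitian metric $\tilde\omega$ on $\mathbb{CP}^n$ with $\tilde\omega|_{B_1} = \omega|_{B_1}$ by interpolating $\omega$ (inside $B_2$) with $C\omega_{FS}$ (outside) via the cutoff $\rho$, for $C$ large enough to ensure positivity throughout. Likewise, I would define smooth $(1,1)$-forms $\tilde\chi_j := \rho \chi_j + C_j \omega_{FS}$ on $\mathbb{CP}^n$, with $C_j$ so large that $\tilde\chi_j \in \Gamma_m(\tilde\omega)$ globally; by a harmless additive constant adjustment this gives $\tilde\chi_j|_{B_1} = \chi_j|_{B_1}$. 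Using the $(\omega,m)$-subharmonic analog of \cref{extension to global} --- whose proof adapts verbatim since $\overline{\Gamma_m(\omega)}$ is closed under addition and under the pointwise maximum of potentials --- I would extend each $u_j$ to a bounded $(\tilde\chi_j, \tilde\omega, m)$-subharmonic function $\tilde u_j$ on $\mathbb{CP}^n$ with $\tilde u_j|_{B_1} = u_j|_{B_1}$, and similarly extend $\varphi$ to $\tilde\varphi$. Letting $\tilde\mu$ denote the extension of $\mu$ by zero to $\mathbb{CP}^n$, one has $\tilde\mu \ll H_m(\tilde\varphi)$ globally, and on $B_1$ the Hessian bound $(\tilde\chi_j + dd^c \tilde u_j)^m \wedge \tilde\omega^{n-m} \geq f_j \tilde\mu$ holds for each $j$.

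Finally, I would apply the $m$-function generalization of \cref{global mix type} on $(\mathbb{CP}^n, \tilde\omega)$ --- obtained from \cref{mix type lemma} by replacing the two-variable G\aa rding inequality with its $m$-variable form, the rest of the argument going through essentially unchanged --- to conclude
\begin{equation*}
\bigwedge_{j=1}^{m}(\tilde\chi_j + dd^c \tilde u_j) \wedge \tilde\omega^{n-m} \geq (f_1 \cdots f_m)^{1/m}\,\tilde\mu
\end{equation*}
as currents on $\mathbb{CP}^n$. Restricting to $B_1$, where all tilded objects coincide with the originals, yields the bound on $B_1$, and thus on $B$. The hardest part will be the compatibility of the extensions across the annulus $B_2 \setminus B_1$: one must preserve $(\tilde\omega,m)$-positivity of $\tilde\chi_j + dd^c \tilde u_j$ while keeping $\tilde u_j$ bounded during the max-gluing in \cref{extension to global}. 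I expect this to be controlled by taking the constant $C_j$ in the construction of $\tilde\chi_j$ and the multiplicative constant in \cref{extension to global} large enough that the $C_j \omega_{FS}$-term dominates any negativity produced by the cutoff.
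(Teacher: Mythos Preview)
Your overall strategy --- compactify in $\mathbb{CP}^n$, extend all data, invoke the global inequality, restrict --- is the same as the paper's. But there is a real gap in the execution. You set $\tilde\chi_j := \rho\chi_j + C_j\omega_{FS}$ and then claim that ``by a harmless additive constant adjustment this gives $\tilde\chi_j|_{B_1} = \chi_j|_{B_1}$.'' That is false: on $B_1$ one has $\tilde\chi_j = \chi_j + C_j\omega_{FS}$, and no constant can remove the extra $C_j\omega_{FS}$. If you also insist on $\tilde u_j|_{B_1} = u_j|_{B_1}$, then after applying the global inequality and restricting you obtain only
\[
(\chi_1 + C_1\omega_{FS} + dd^c u_1)\wedge\cdots\wedge(\chi_m + C_m\omega_{FS} + dd^c u_m)\wedge\omega^{n-m}\geq (f_1\cdots f_m)^{1/m}\mu,
\]
which is strictly weaker than the claim, since the left side has been inflated by positive terms. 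The repair is not an additive constant but a local potential: on $B_1$ write $\omega_{FS} = dd^c\psi_{FS}$ and set $\tilde u_j := u_j - C_j\psi_{FS}$ there, so that $\tilde\chi_j + dd^c\tilde u_j = \chi_j + dd^c u_j$ exactly; then extend this shifted function.

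The paper sidesteps the whole issue more cleanly. It first reduces to $\chi_1=\cdots=\chi_m=0$ (absorbing each $\chi_j$ into the local potential), and then takes as the \emph{single} global reference form $\omega_1' = dd^c\rho$, where $\rho$ is a smooth plurisubharmonic defining function of $B$. Because $\omega_1'$ has the global potential $\rho$ on $B$, the identity $\omega_1' + dd^c(u_j-\rho) = dd^c u_j$ holds on the nose throughout $B_1$, so the extension $u_j' := \max(u_j-\rho,\,v)$ (glued against a qpsh $v$ with a log singularity at the origin, furnished by \cref{extension to global}) recovers the local Hessian forms exactly after restriction. This has two advantages over your route: no juggling of local Fubini--Study potentials is needed, and since all functions live in $\operatorname{SH}_m(X,\omega_1',\omega')$ for the \emph{same} $\omega_1'$, the global \cref{global mix type} applies directly without the multi-reference-form generalization you are forced to invoke.
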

\begin{proof}
 We may assume without loss of generality that $\chi_1=\chi_2=...=\chi_m=0$, the proof of the general case proceeds exactly the same. View $B$ as an open submanifold in the projective space $M:=\mathbb{CP}^n$ and let $g$ be the standard Fubini-Study metric on $M$. We also choose a smooth plurisubharmonic defining function $\rho$ of $B$ and write $\omega_1:=dd^c\rho$, which is a K\"ahler form near $\overline{B}$. Since the problem is local, we may work on a smaller ball $B_1\subset\subset B$. Select a cutoff function $\chi\in C^\infty(X)$ such that $0\leq\chi\leq1$, $\chi|_{\overline{B_1}}\equiv1$ and supp$\chi\subset\subset B$. Set $\omega_1^\prime:=\chi\omega_1+(1-\chi)g$, which is clearly a Hermitian metric on $M$ such that $\omega_1^\prime|_{B_1}=\omega_1$. Similarly, one can construct a Hermitian metric $\omega^\prime$ on $M$ such that $\omega^\prime|_{B_1}=\omega$. 

  Using \cref{extension to global}, one can easily paste a $\log$ function with $\rho$ to construct a quasi-plurisubharmonic function $v\in\mbox{PSH}(X,\omega_1^\prime)\cap L^\infty_{loc}(X-\{0\})\subset\operatorname{SH}_m(X,\omega_1^\prime,\omega^\prime)$ with an isolated singularity at $0$. Here $0\in B_1\subset B$ is the origin. Up to adding a constant, we may assume that $v$ is larger than $u_j-\rho,\varphi-\rho$ for each $j$ near $\partial B_1$. Set now
  $$
u_j^\prime(z)=
\begin{cases}
\max(u_j(z)-\rho(z),v(z)),  & z\in B_1 \\
v(z),    & z\in X\backslash B_1 \\
\end{cases}.
$$  
It is clear that $u_j^\prime\in\operatorname{SH}_m(X,\omega_1^\prime,\omega^\prime)\cap L^{\infty}(X)$ and $u_j^\prime$ coincides with $u_j-\rho$ in $B_1$ (up to shrinking $B_1$ slightly). Similarly, we also glue $\varphi-\rho$ with $v$ to get $\varphi^\prime\in\operatorname{SH}_m(X,\omega_1^\prime,\omega^\prime)\cap L^{\infty}(X)$ such that $\varphi-\rho=\varphi^\prime$ in $B_1$. Shrinking $B_1$ if necessary and extending $\mu$ and $f_j^\prime$ by zero, we get a Radon measure $\mu^\prime$ on $X$, which is absolutely continuous with respect to $H_m(\varphi^\prime)=(\omega_1^\prime+dd^c\varphi^\prime)^m\wedge(\omega^{\prime})^{n-m}$ and satisfying $\mu^\prime|_{B_1}=\mu$. The global mixed type inequality \cref{global mix type} yields that
  $$
(\omega_1^\prime+dd^cu_1^\prime)\wedge...\wedge(\omega_1^\prime+dd^cu_m^\prime)\wedge(\omega^{\prime})^{n-m}\geq (f_1^\prime...f_m^\prime)^{\frac{1}{m}}\mu^\prime
  $$
  on $X$. Note that $\omega_1^\prime=\omega_1=dd^c\rho$ and $u_j^\prime-\rho=u_j$ on $B_1$, we conclude the proof by restricting the above inequality to $B_1$.
  
  \end{proof}

\section{Solving Hessian equations for measures dominated by capacity}\label{section 5}
As an application of mixed type inequalities, we solve complex Hessian equations with the right-hand side well dominated by capacities and moreover absolutely continuous with respect to some Hessian measures, establishing a result analogous to \cite{KN21}.

\begin{theorem}\label{main Hessian equation}
    Let $\mu$ be a positive Radon measure on $X$ such that $\mu\leq A\operatorname{Cap}_{\gamma,\omega,m}^\tau$ for some $A>0$ and $1<\tau<\frac{n}{n-m}$, assume moreover that $\mu$ is absolutely continuous with respect to $H_m(\varphi)$ for some $\varphi\in\operatorname{SH}_m(X,\gamma,\omega)\cap L^{\infty}(X)$. Then, there is a constant $c>0$ and a function $u\in\operatorname{SH}_m(X,\gamma,\omega)\cap L^{\infty}(X)$ such that
    $$
H_m(u)=c\mu.
    $$
\end{theorem}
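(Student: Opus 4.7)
The strategy is a multi-layer approximation built on \cref{Hessian equation for fHm}, which solves $H_m(v) = e^{v}\mu$ for measures of the form $\mu = fH_m(\varphi) + gH_m(\psi)$ with $f \geq \delta > 0$, combined with a rescaling trick to extract the constant $c$. First, by Radon--Nikodym, write $\mu = hH_m(\varphi)$ with $h \in L^1(H_m(\varphi))$, and truncate via $h_j := \min(h, j)$. The approximating measures $\mu_j := (h_j + 1/j)H_m(\varphi)$ then satisfy the hypotheses of \cref{Hessian equation for fHm} (take $f = h_j + 1/j \geq 1/j$, $g = 0$) and converge to $\mu$ in total variation.

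The constant $c$ emerges via an additional $\epsilon$-rescaling. For each $\epsilon \in (0,1]$ and each $j$, I plan to find $(c_{j,\epsilon}, u_{j,\epsilon})$ solving
\[ H_m(u_{j,\epsilon}) = c_{j,\epsilon}\, e^{\epsilon u_{j,\epsilon}}\, \mu_j, \qquad \sup_X u_{j,\epsilon} = 0. \]
The substitution $v := \epsilon u_{j,\epsilon}$ uses the identity $\gamma + dd^cu = \epsilon^{-1}(\epsilon\gamma + dd^cv)$ to transform this into
\[ (\epsilon\gamma + dd^cv)^m \wedge \omega^{n-m} = \epsilon^m c_{j,\epsilon}\, e^{v}\, \mu_j, \]
a \cref{Hessian equation for fHm}-type equation with base form $\epsilon\gamma$, which is still strictly $(\omega,m)$-positive. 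Applying \cref{Hessian equation for fHm} with this shifted base form produces a bounded $v$, and the normalization $\sup v = 0$ uniquely determines $c_{j,\epsilon}>0$. Back-substitution gives the desired $(c_{j,\epsilon}, u_{j,\epsilon})$.

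Uniform estimates are the heart of the argument. Using the capacity hypothesis $\mu \leq A\operatorname{Cap}_{\gamma,\omega,m}^\tau$ with $\tau > 1$ together with a Ko\l odziej--Nguyen type uniform $L^\infty$ estimate, one expects $\|u_{j,\epsilon}\|_{L^\infty} \leq C$ independently of $j$ and $\epsilon$, while two-sided bounds $0 < c_- \leq c_{j,\epsilon} \leq c_+$ follow from integrating the equation with G{\aa}rding's inequality (upper bound) and a subsolution together with \cref{domination for beta 2} (lower bound), echoing the proof of \cref{Hessian equation for m-positive gamma}. With these bounds in hand, let $\epsilon \to 0$ for fixed $j$: uniform boundedness yields $e^{\epsilon u_{j,\epsilon}} \to 1$ uniformly, and up to a subsequence $u_{j,\epsilon} \to u_j^*$ in $L^1(X)$ and $c_{j,\epsilon} \to c_j^* > 0$. \cref{main weak convergence} upgrades this to convergence in capacity, giving $H_m(u_j^*) = c_j^* \mu_j$. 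Repeating the compactness argument for $j \to \infty$, using $\mu_j \to \mu$ in total variation together with \cref{main weak convergence} and \cref{thm: weak convergence lemma}, produces the desired $(c, u)$. Uniqueness of $c$ follows from \cref{domination for beta 3}: if $H_m(u_1) = c_1\mu$ and $H_m(u_2) = c_2\mu$, comparing via the domination principle forces $c_1 = c_2$.

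The principal obstacle is the uniform $L^\infty$ estimate as $\epsilon \to 0$: the base form $\epsilon\gamma$ degenerates and the Ko\l odziej--Nguyen argument must be tailored to survive the rescaling, essentially by tracking how the constants in the capacity estimate depend on the shifted base form. A secondary difficulty is that the approximation introduces the spurious term $(1/j)H_m(\varphi)$, which is only dominated by $\operatorname{Cap}_{\gamma,\omega,m}^1$; the strict inequality $\tau > 1$ together with the smallness factor $1/j$ must conspire to absorb it in the uniform capacity estimate.
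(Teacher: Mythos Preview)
Your strategy is essentially the paper's own, just reorganized: the paper's Step~1 solves $H_m(u_k)=e^{u_k/k}\mu$ for bounded density $f\geq\delta$ by exactly your rescaling trick (their $1/k$ is your $\epsilon$), then Steps~2 and~3 remove the upper and lower truncations separately. Two points deserve correction, however.

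First, your ``principal obstacle'' is misdirected. The $L^\infty$ estimate (\cref{hessian a priori estimate}) is applied to $u_{j,\epsilon}$, which is $(\gamma,\omega,m)$-subharmonic with the \emph{fixed} form $\gamma$, not to $v=\epsilon u_{j,\epsilon}$ with the degenerating form $\epsilon\gamma$. Since $\sup_X u_{j,\epsilon}=0$ gives $e^{\epsilon u_{j,\epsilon}}\leq 1$, one has $H_m(u_{j,\epsilon})\leq c_{j,\epsilon}\mu_j$, and the estimate goes through with constants independent of $\epsilon$ as soon as $c_{j,\epsilon}$ is bounded. The bound on $c_{j,\epsilon}$ for fixed $j$ comes directly from the domination principle \cref{domination for beta 2} applied to the $\epsilon\gamma$-equation, comparing $w$ with $\epsilon\varphi\pm C$; no tracking of capacity constants through the rescaling is needed.

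Second, and more substantively, your claim that the bounds on $c_{j,\epsilon}$ are uniform in \emph{both} $j$ and $\epsilon$ via ``G{\aa}rding's inequality'' is too optimistic. The domination argument above gives bounds depending on the oscillation of the density $h_j+1/j$, hence on $j$. The paper gets around this by separating the truncations: with $f_j=\min(f,j)$ (density increasing) the constants $c_j$ are \emph{decreasing}, and with $g_j=\max(f,1/j)$ (density decreasing) the constants $b_j$ are \emph{increasing}, so one-sided bounds come for free. The nontrivial remaining bound---the upper bound on $b_j$ when removing the lower truncation $1/j\to 0$---is obtained in the paper's Step~3 via the global mixed Hessian inequality \cref{global mix type} (integrating $\gamma_{u_j}\wedge\gamma_\varphi^{m-1}\wedge\omega^{n-m}\geq b_j^{1/m}g_j^{1/m}H_m(\varphi)$ and controlling the left side by \cref{CLN ieq}). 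Pointwise G{\aa}rding is not enough here since $\varphi$ is merely bounded; this is precisely where the mixed inequality earns its keep, and you should flag it explicitly. Your combined truncation $(h_j+1/j)$ loses the monotonicity and makes both the upper and lower bounds on $c_j^*$ harder to pin down simultaneously.
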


Before going to prove \cref{main Hessian equation}, we first make some preparations. The following version of Chern-Levine-Nirenberg inequalities established in \cite{KN25c} will be needed (in \cite[Theorem 4.1]{KN25c} it was stated for $\gamma$ positive, but their argument is certainly valid when $\gamma$ is just $m$-positive):
\begin{lemma} \cite[Theorem 4.1]{KN25c} \label{CLN ieq}
    Let $\varphi,\psi\in \operatorname{SH}_m(X,\gamma,\omega)\cap L^{\infty} (X)$ be such that $-1\leq\varphi\leq0$ and $\sup_X\psi=0$. Then, there exists a uniform constant $C=C(m,n,\gamma,\omega)$ such that for each $1\leq k\leq m$,
    $$
\int_X(-\psi)\gamma_\varphi^k\wedge\omega^{n-k}\leq C.
    $$
\end{lemma}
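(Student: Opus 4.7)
The plan is to prove the bound by induction on $k\in\{0,1,\dots,m\}$, with the inductive hypothesis that there is a constant $C_k=C_k(n,m,\gamma,\omega)$ such that $\int_X(-\psi)\gamma_\varphi^k\wedge\omega^{n-k}\leq C_k$ for every pair $(\varphi,\psi)$ satisfying the hypotheses of the lemma. The base case $k=0$ reduces to $\int_X(-\psi)\omega^n\leq C_0$, which is precisely \cref{L^1 compactness}.

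For the inductive step, split
\begin{equation*}
\int_X(-\psi)\gamma_\varphi^k\wedge\omega^{n-k}=\int_X(-\psi)\gamma_\varphi^{k-1}\wedge\gamma\wedge\omega^{n-k}+\int_X(-\psi)\gamma_\varphi^{k-1}\wedge dd^c\varphi\wedge\omega^{n-k}.
\end{equation*}
Picking a constant $A>0$ so that $A\omega-\gamma\geq 0$ pointwise as a smooth $(1,1)$-form, the first summand is dominated by $A\int_X(-\psi)\gamma_\varphi^{k-1}\wedge\omega^{n-(k-1)}\leq AC_{k-1}$. For the second summand, set $T:=\gamma_\varphi^{k-1}\wedge\omega^{n-k}$; since $d\gamma_\varphi=d\gamma$ is smooth, any torsion appearing through $dT$ comes only from derivatives of $\omega$. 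Stokes' theorem on the closed manifold $X$ swaps $dd^c$ from $\varphi$ to $\psi$:
\begin{equation*}
\int_X(-\psi)\,dd^c\varphi\wedge T=\int_X(-\varphi)\,dd^c\psi\wedge T+R,
\end{equation*}
with $R$ a sum of torsion corrections. Using $|\varphi|\leq 1$ and $dd^c\psi=\gamma_\psi-\gamma$, the transferred term is bounded by $\int_X\gamma_\psi\wedge T+A\int_X\omega\wedge T$; each of these is a mixed mass in at most $k$ factors of $(\gamma,\omega,m)$-subharmonic type which, by another application of Stokes and the inductive hypothesis applied with $-\psi$ replaced by the constant $1$, is uniformly bounded.

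The remainder $R$ consists of terms of the shape $\int_X\psi\,d\varphi\wedge d^c\omega\wedge S$ together with their conjugates, where $S$ is a positive current of the appropriate bidegree carrying at most $k-1$ factors of $\gamma_\varphi$. Following the strategy used in the proof of \cref{lem: cap convergence lemma2}, the Cauchy--Schwarz inequality of \cite[Lemma 2.3]{KN25b} bounds any such term by
\begin{equation*}
C\Bigl(\int_X|\psi|\,d\varphi\wedge d^c\varphi\wedge\omega\wedge S'\Bigr)^{1/2}\Bigl(\int_X|\psi|\,\omega^2\wedge S'\Bigr)^{1/2};
\end{equation*}
the second factor is a lower-order mass controlled by the inductive hypothesis, and the first factor, after one further integration by parts, reduces to expressions involving $\varphi\,dd^c\varphi$ or $\varphi\,dd^c\psi$, which are bounded because $|\varphi|\leq 1$.

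I expect the main burden to be bookkeeping: each integration by parts against the non-closed $\omega$ or $\gamma$ spawns lower-order descendants of the same general form, either with fewer $\gamma_\varphi$-factors or with an $\omega$-factor traded for a piece of $d\omega$, $d^c\omega$, or $dd^c\omega$. A clean organization is a double induction on $k$ and on the count of active torsion factors, designed so that every Cauchy--Schwarz plus Stokes step strictly decreases one of the two indices. Granting that this recursion closes, all constants depend only on $(n,m,\gamma,\omega)$, since the only external inputs are \cref{L^1 compactness} and the smooth bounds on $\omega,\gamma,d\omega,dd^c\omega$.
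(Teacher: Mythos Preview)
The paper does not prove this lemma; it is quoted from \cite[Theorem 4.1]{KN25c} with the remark that the argument there, stated for positive $\gamma$, goes through when $\gamma$ is merely $(\omega,m)$-positive. So there is no in-paper proof to compare against, and your proposal is in effect a reconstruction of the cited argument.

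Your outline is the standard Chern--Levine--Nirenberg induction and is essentially correct, but two points need tightening before the recursion can be declared closed. First, you write that ``any torsion appearing through $dT$ comes only from derivatives of $\omega$''; since $\gamma$ is not assumed closed, $d\gamma_\varphi=d\gamma$ also contributes, so the remainder $R$ carries terms in $d\gamma$, $d^c\gamma$, $dd^c\gamma$ as well. This is harmless (they are smooth and bounded by $C\omega$ or $C\omega^2$), but the bookkeeping must track them. Second, your bound on $\int_X\gamma_\psi\wedge T$ by ``the inductive hypothesis applied with $-\psi$ replaced by the constant $1$'' does not work as stated: that substitution only controls masses of the form $\int_X\gamma_\varphi^{j}\wedge\omega^{n-j}$, with no $\gamma_\psi$ factor. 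The correct step is to write $\gamma_\psi=\gamma+dd^c\psi$ and integrate by parts once more, obtaining $\int_X\psi\,dd^cT$ plus lower-order torsion; since $dd^cT$ is a sum of currents of the type $\gamma_\varphi^{j}\wedge(\text{smooth})\wedge\omega^{n-j-2}$ with $j\le k-1$, this lands back in the inductive hypothesis with $(-\psi)$ against strictly fewer $\gamma_\varphi$-factors. With these two fixes your double induction does close, and all constants depend only on $(n,m,\gamma,\omega)$ through \cref{L^1 compactness} and the $C^2$-size of $\gamma,\omega$.
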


As a consequence of the above version of Chern-Levine-Nirenberg inequality, we have the following sharp estimates for the capacity of sublevel sets, due to \cite{KN25c}: 
\begin{corollary} \cite[Theorem 1.1]{KN25c} \label{decay of capacity}
    There is a uniform constant $C$ depending only on $m,n,\gamma,\omega$ such that for any $\psi\in\operatorname{SH}_m(X,\gamma,\omega)$ and $\sup_X\psi=-1$, we have for every $t>0$,
    $$
\operatorname{Cap}_{\gamma,\omega,m}(\psi<-t)\leq\frac{C}{t}.
    $$
\end{corollary}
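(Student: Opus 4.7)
The plan is to estimate $\operatorname{Cap}_{\gamma,\omega,m}(\{\psi<-t\})$ directly from its definition by testing against an arbitrary competitor $u\in\operatorname{SH}_m(X,\gamma,\omega)\cap L^\infty(X)$ with $-1\le u\le 0$, and bounding $\int_{\{\psi<-t\}}H_m(u)$ uniformly in $u$ via a Chebyshev-type step followed by the Chern--Levine--Nirenberg estimate recorded in \cref{CLN ieq}. Since $\sup_X\psi=-1$, the function $-\psi$ is everywhere $\ge 1$ on $X$ and $>t$ on the sublevel $\{\psi<-t\}$, which immediately yields
$$\int_{\{\psi<-t\}}H_m(u)\;\le\;\int_X\frac{-\psi}{t}\,H_m(u)\;=\;\frac{1}{t}\int_X(-\psi)\,H_m(u).$$

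To control the right-hand side independently of $u$, I normalize $\psi$ by setting $\tilde\psi:=\psi+1\in\operatorname{SH}_m(X,\gamma,\omega)$, which satisfies $\sup_X\tilde\psi=0$ and $-\psi=1+(-\tilde\psi)$. Consequently
$$\int_X(-\psi)\,H_m(u)\;=\;\int_X H_m(u)\;+\;\int_X(-\tilde\psi)\,H_m(u).$$
For the total-mass term $\int_X H_m(u)$, a binomial expansion of $(\gamma+dd^cu)^m$ combined with the standard Hermitian integration-by-parts machinery underlying \cref{CLN ieq} produces a uniform bound $\int_X H_m(u)\le C_1(n,m,\gamma,\omega)$ for every $u$ with $-1\le u\le 0$. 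For the second term, $\tilde\psi$ need not be bounded, so \cref{CLN ieq} is not directly applicable; I truncate by $\tilde\psi_s:=\max(\tilde\psi,-s)\in\operatorname{SH}_m(X,\gamma,\omega)\cap L^\infty(X)$, which still satisfies $\sup_X\tilde\psi_s=0$, and apply \cref{CLN ieq} at $k=m$ with test pair $(\varphi,\psi)=(u,\tilde\psi_s)$ to obtain $\int_X(-\tilde\psi_s)\,H_m(u)\le C_2$ with $C_2=C_2(n,m,\gamma,\omega)$ independent of both $u$ and $s$.

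Letting $s\to\infty$, monotone convergence ($-\tilde\psi_s\nearrow -\tilde\psi$) transfers the estimate to $\tilde\psi$ itself. Combining the two bounds gives $\int_X(-\psi)\,H_m(u)\le C_1+C_2=:C$, and taking the supremum over admissible $u$ in the definition of $\operatorname{Cap}_{\gamma,\omega,m}$ produces $\operatorname{Cap}_{\gamma,\omega,m}(\{\psi<-t\})\le C/t$, which is the claim. The only mildly delicate point of the argument is the handling of the possible unboundedness of $\psi$ from below, for which the truncation-plus-monotone-convergence step is essential; the rest is a direct Chebyshev-type computation feeding into \cref{CLN ieq}.
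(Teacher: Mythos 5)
Your proof is correct and follows essentially the same route as the paper's: a Chebyshev-type bound $t\,\mathbf{1}_{\{\psi<-t\}}\le-\psi$ tested against an arbitrary competitor $u$ with $-1\le u\le 0$, followed by the Chern--Levine--Nirenberg estimate of \cref{CLN ieq} and a supremum over $u$. You are in fact somewhat more careful than the paper, which applies \cref{CLN ieq} directly to $\psi$ despite the normalization $\sup_X\psi=-1$ and possible unboundedness; your shift to $\tilde\psi=\psi+1$, the uniform total-mass bound, and the truncation-plus-monotone-convergence step fill in exactly those implicit details.
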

\begin{proof}
    Fix $\varphi\in\operatorname{SH}_m(X,\gamma,\omega)$ such that $-1\leq\varphi\leq0$. We can write
    \begin{align*}
        t\int_{\{\psi<-t\}}\gamma_{\varphi}^m\wedge\omega^{n-m}\leq\int_{\{\psi<-t\}}(-\psi)\gamma_{\varphi}^m\wedge\omega^{n-m}\leq C,
    \end{align*}
    where the last inequality follows from \cref{CLN ieq}. Taking supreme with respect to $\varphi$ we get the desired estimate.
\end{proof}

We recall the following lemma established in \cite{KN16} and \cite{PSWZ25}:

\begin{lemma}\cite[Lemma 3.8]{KN16} \cite[Corollary 7.4]{PSWZ25} \label{capacity level set lemma}
      Fix $0<\epsilon<\frac{3}{4}$. Consider $u,v\in \operatorname{SH}_m(X,\gamma,\omega)\cap L^{\infty} (X)$ with $u\leq0$ and $-1\leq v\leq0$. Set $S(\epsilon):=\inf_X[u-(1-\epsilon)v]$ and $U(\epsilon,s):=\{u<(1-\epsilon)v+S(\epsilon)+s\}$. Then, there exists $C_1=C_1(m,n,\gamma,\omega)$ such that for any $0<s,t<\frac{(\lambda\epsilon)^3}{10C_1}$, we have
    $$
t^m\operatorname{Cap}_{\gamma,\omega,m}(U(\epsilon,s))\leq C\int_{U(\epsilon,s+t)}(\gamma+dd^cu)^m\wedge\omega^{n-m}.
    $$
    Where $C>0$ is a uniform constant depending only on $m$. 
\end{lemma}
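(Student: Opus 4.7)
The plan is to prove the estimate by a standard capacity-testing argument combined with a Hermitian comparison inequality. Let $w\in\operatorname{SH}_m(X,\gamma,\omega)$ with $-1\le w\le 0$ be an arbitrary capacity candidate, and introduce the auxiliary potential
$$
\phi:=(1-\epsilon)v+tw+S(\epsilon)+s+t.
$$
Since $t<\epsilon$, the identity
$$
\gamma+dd^c\phi=(\epsilon-t)\gamma+(1-\epsilon)(\gamma+dd^cv)+t(\gamma+dd^cw)
$$
exhibits $\phi\in\operatorname{SH}_m(X,\gamma,\omega)$ as a convex combination of forms in $\overline{\Gamma_m(\omega)}$. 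From $-1\le w\le 0$ one reads off the two-sided inclusion
$$
U(\epsilon,s)\subset\{u<\phi\}\subset U(\epsilon,s+t),
$$
the first using $tw+t\ge 0$ and the second using $tw\le 0$.

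Expanding $(\gamma+dd^c\phi)^m$ by multilinearity and keeping only the pure $t^m\gamma_w^m$ term, all remaining summands being non-negative by the Garding cone property, yields
$$
(\gamma+dd^c\phi)^m\wedge\omega^{n-m}\ge t^m\,H_m(w).
$$
Integrating over $\{u<\phi\}$ and invoking the first inclusion above gives
$$
t^m\int_{U(\epsilon,s)}H_m(w)\le\int_{\{u<\phi\}}H_m(\phi).
$$
Taking the supremum over $w$ would then conclude the proof, provided one has a Hermitian comparison estimate of the form
$$
\int_{\{u<\phi\}}H_m(\phi)\le C\int_{\{u<\phi\}}H_m(u)
$$
with $C$ depending only on $m$, since combining this with the second inclusion delivers the claimed bound.

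The main obstacle is precisely this comparison estimate. In the Kähler setting it follows by a single integration by parts applied to $\int(\phi-u)\,dd^c(\phi-u)\wedge T$ against the symmetric test current $T=\sum_{j=0}^{m-1}\gamma_\phi^{j}\wedge\gamma_u^{m-1-j}\wedge\omega^{n-m}$; here, however, the non-closedness of $\omega$ produces additional contributions involving $d\omega$ and $dd^c\omega$. I would follow the scheme already used in the proof of \cref{lem: cap convergence lemma2}, controlling the cross terms via the Hermitian Cauchy--Schwarz inequality \cite[Lemma 2.3]{KN25b} and its consequences \cite[Corollary 2.5]{KN25b}. The quantitative point is that the uniform $m$-positivity slack $(\epsilon-t)\gamma$ built into $\phi$, combined with the $m$-positivity constant $\lambda$ of $\gamma$, supplies a main coercive term of order $\lambda\epsilon$, while the Hermitian error is of cubic order in the oscillation $\|\phi-u\|_\infty\lesssim s+t$; balancing these two is what forces the smallness threshold $0<s,t<(\lambda\epsilon)^3/(10C_1)$ with $C_1=C_1(m,n,\gamma,\omega)$ appearing in the statement.
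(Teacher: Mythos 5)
This lemma is stated in the paper without proof --- it is quoted from \cite[Lemma 3.8]{KN16} and \cite[Corollary 7.4]{PSWZ25} --- so your attempt can only be measured against the argument in those references. Your skeleton is exactly theirs: test the capacity with a candidate $w$, form $\phi=(1-\epsilon)v+tw+S(\epsilon)+s+t$, verify the sandwich $U(\epsilon,s)\subset\{u<\phi\}\subset U(\epsilon,s+t)$, and extract $t^m H_m(w)\le H_m(\phi)$ from the multinomial expansion. All of this is correct: the algebraic identity for $\gamma+dd^c\phi$ checks out, the mixed terms are positive measures by the Bedford--Taylor-type theory for bounded $(\gamma,\omega,m)$-subharmonic functions, and $t<\epsilon$ does follow from the stated threshold.

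The gap is the estimate $\int_{\{u<\phi\}}H_m(\phi)\le C\int_{\{u<\phi\}}H_m(u)$, which you correctly identify as the crux but do not prove. This is not a routine adaptation of the integration-by-parts scheme of \cref{lem: cap convergence lemma2}: on a Hermitian manifold the comparison principle fails globally, and the usable substitute --- the modified comparison principle of \cite[Theorem 0.2, Corollary 3.7]{KN16} (see also \cite[Theorem 7.1]{PSWZ25} for a general background form $\gamma$) --- is a standalone theorem whose proof runs an iteration over a one-parameter family of sublevel sets $\{u<(1-\epsilon)v+S+s'\}$ for $s'\le s$, absorbing the torsion contributions level by level; the cubic threshold $s<\epsilon^3/(\mathrm{const})$ is an output of that iteration, not of a single Cauchy--Schwarz balancing of a coercive term against an error term as you suggest. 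Moreover, the constant it produces has the form $1+Cs/\epsilon^{k}$ and is only uniform on sets of small ``height,'' which is precisely why the hypothesis $0<s,t<(\lambda\epsilon)^3/(10C_1)$ must be imposed before one obtains a constant depending only on $m$. As written, your argument reduces the lemma to an unproven claim that carries essentially all of its difficulty; invoking the modified comparison principle from \cite{KN16} or \cite{PSWZ25} at that point would close the gap and make the proof complete.
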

We can now proceed to state the following version of $L^\infty$- estimate:
\begin{theorem}\label{hessian a priori estimate}
     Fix $0<\epsilon<\frac{3}{4}$. Consider $u,v\in \operatorname{SH}_m(X,\gamma,\omega)\cap L^{\infty}(X)$ with $u\leq0$ and $-1\leq v\leq0$. Assume furthermore that
    $$
(\gamma+dd^cu)^m\wedge\omega^{n-m}\leq\mu,\quad \sup_Xu=0
    $$
for some positive Radon measure $\mu$ satisfying $\mu\leq A\operatorname{Cap}_{\gamma,\omega,m}^\tau$, where $A>0,1<\tau<\frac{n}{n-m}$ are constants. Put $U(\epsilon,s):=\{u<(1-\epsilon)v+S(\epsilon)+s\}$ for $s>0$. Then, for any $0<s<\frac{(\lambda\epsilon)^3}{10C_1}$ we have
    $$
s\leq  C\cdot \operatorname{Cap}_{\gamma,\omega,m}(U(\epsilon,s))^{\frac{\tau-1}{m}}.
    $$
    Where $C$ is a constant depending on $n,m,\gamma,X,\omega$. In particular, we have a uniform estimate
    $$
|\inf_Xu|\leq C.
    $$
\end{theorem}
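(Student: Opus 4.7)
The plan is to convert the hypotheses into a Kolodziej-type recursive inequality for the capacity function $h(s) := \operatorname{Cap}_{\gamma,\omega,m}(U(\epsilon, s))$, and then solve the recursion via a doubling iteration that extracts the precise exponent $(\tau-1)/m$.

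First I would combine \cref{capacity level set lemma} with the measure hypothesis. The lemma gives $t^m h(s) \leq C_0 \int_{U(\epsilon, s+t)} H_{m}(u)$ for $s,t$ in the admissible range. Using $H_m(u) \leq \mu$ and the domination $\mu \leq A \operatorname{Cap}_{\gamma,\omega,m}^\tau$ applied to the Borel set $U(\epsilon, s+t)$,
$$t^m h(s) \leq C_0 A \, h(s+t)^\tau,$$
equivalently $h(s+t) \geq c_0 \, t^{m/\tau} \, h(s)^{1/\tau}$ with $c_0 := (C_0 A)^{-1/\tau}$.

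Next I would run a doubling iteration. Fix $s_0>0$ small; since $U(\epsilon, s_0)$ is a non-empty open set, $h(s_0) > 0$. Define inductively $s_{j+1} := s_j + t_j$ with $t_j := (2/c_0)^{\tau/m}\, h(s_j)^{(\tau-1)/m}$, chosen so that the recursion forces $h(s_{j+1}) \geq 2 h(s_j)$. Then $h(s_j) \geq 2^j h(s_0)$, and since the ratio $2^{(\tau-1)/m}$ exceeds $1$, the partial sums telescope as a convergent geometric series:
$$s_{j+1} - s_0 = \sum_{k=0}^{j} t_k \leq C_1 \, h(s_0)^{(\tau-1)/m} \, 2^{j(\tau-1)/m} = C_1 \, h(s_j)^{(\tau-1)/m}.$$
The iteration stops once $h(s_j)$ approaches the total capacity $K := \operatorname{Cap}_{\gamma,\omega,m}(X) < \infty$. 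For any $s$ in the admissible range, I would pick $j$ with $s_j \leq s \leq s_{j+1}$ and use $h(s_{j+1}) \leq 2 h(s)$; letting $s_0 \to 0^+$ then yields $s \leq C \, h(s)^{(\tau-1)/m}$, with the case $h(s) \geq K/2$ handled trivially using the uniform upper bound on $s$.

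For the uniform $L^{\infty}$ bound I would normalize the solution cleverly. Assuming $M := |\inf_X u| \geq 1$ (otherwise we are done), set $v := u/M$. The identity
$$\gamma + dd^c v = (1 - M^{-1})\gamma + M^{-1}(\gamma + dd^c u)$$
presents $\gamma + dd^c v$ as a sum of two $(\omega, m)$-positive currents, so $v \in \operatorname{SH}_m(X, \gamma, \omega)$ with $-1 \leq v \leq 0$. A direct calculation gives $S(\epsilon) = -(M - 1 + \epsilon)$ and
$$U(\epsilon, s) = \left\{u < -M + \tfrac{sM}{M-1+\epsilon}\right\} \subset \{u < -M/2\}$$
provided $s \leq (M-1+\epsilon)/2$. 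Applying \cref{decay of capacity} to $\psi := u - 1$ (which satisfies $\sup_X \psi = -1$) yields $\operatorname{Cap}_{\gamma,\omega,m}(\{u < -M/2\}) \leq C_2/M$. Fixing the constant $s^* := \min\!\left((\lambda\epsilon)^3/(20 C_1),\, 1\right)$, the first part gives $s^* \leq C\,(C_2/M)^{(\tau-1)/m}$, which forces $M \leq C'$ and hence $|\inf_X u| \leq C$.

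The main obstacle I foresee is the doubling iteration: one must verify that all $s_j$ and $t_j$ remain inside the admissible range of \cref{capacity level set lemma}, and that the geometric summation yields exactly the exponent $(\tau-1)/m$ rather than the naive $(\tau-1)/(m\tau)$ a single application of the recursion would produce. The doubling scheme is what delivers this: enforcing $h(s_{j+1}) \geq 2h(s_j)$ makes $\sum_k t_k$ a geometric series with ratio $2^{(\tau-1)/m}$, so the total is comparable to its largest term $h(s_j)^{(\tau-1)/m}$, and taking $s_0 \to 0^+$ removes the starting point while preserving this bound.
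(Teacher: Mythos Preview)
Your overall strategy matches the paper's: derive the recursion $t\,a(s)\leq C\,a(s+t)^{\tau}$ (with $a(s)=h(s)^{1/m}$) from \cref{capacity level set lemma} and the hypothesis $\mu\leq A\operatorname{Cap}_{\gamma,\omega,m}^{\tau}$, then run a Ko\l odziej--type iteration. The paper does not spell out the iteration but simply cites \cite[Theorem~3.10]{KN16}, which is the standard \emph{downward} scheme: from a given $s_0$ one sets $s_{j+1}=s_j-t_j$ with $t_j$ chosen so that $a(s_{j+1})\leq a(s_j)/2$; then $t_j\leq C\,2^{-j(\tau-1)}a(s_0)^{\tau-1}$ is genuinely summable and one reaches a contradiction if $s_0>C'a(s_0)^{\tau-1}$.

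Your upward doubling variant can be made to work, but two steps are misstated. First, from $h(s_k)\geq 2^{k}h(s_0)$ you cannot bound $\sum_{k}t_k$ from \emph{above}; the usable inequality is the reverse one $h(s_k)\leq 2^{-(j-k)}h(s_j)$ (coming from $h(s_{k+1})\geq 2h(s_k)$), which gives $\sum_{k\leq j}t_k\leq C'\,h(s_j)^{(\tau-1)/m}$ directly. Second, the claim $h(s_{j+1})\leq 2h(s)$ for $s_j\leq s\leq s_{j+1}$ is unjustified and in fact unnecessary: monotonicity already gives $h(s_j)\leq h(s)$, so $s\leq s_{j+1}\leq s_0+C'\,h(s)^{(\tau-1)/m}$, and letting $s_0\to 0$ finishes.

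For the $L^{\infty}$ bound the paper is considerably simpler: it takes $v\equiv 0$, so that $S(\epsilon)=\inf_X u$ and $U(\epsilon,s)=\{u<\inf_X u+s\}$, and then \cref{decay of capacity} gives $h(s)\leq C/|\inf_X u+s|$ immediately; plugging this into $s\leq C\,h(s)^{(\tau-1)/m}$ for one fixed admissible $s$ bounds $|\inf_X u|$. Your choice $v=u/M$ is valid but an unnecessary detour.
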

\begin{proof}
    Using \cref{capacity level set lemma} we can write for each $0<s,t<\frac{(\lambda\epsilon)^3}{10C_1}$
     \begin{align*}
         t^m\operatorname{Cap}_{\gamma,\omega,m}(U(\epsilon,s))&\leq C\int_{U(\epsilon,s+t)}(\gamma+dd^cu)^m\wedge\omega^{n-m}\leq C\int_{U(\epsilon,s+t)}\mu\\
         &\leq CA\cdot \operatorname{Cap}_{\gamma,\omega,m}(U(\epsilon,s+t))^\tau.
     \end{align*}
Set $a(s):=\operatorname{Cap}_{\gamma,\omega,m}(U(\epsilon,s))^{\frac{1}{m}}$, then we have
$$
ta(s)\leq Ca(s+t)^{\tau}.
$$
Now, exactly the same argument as \cite[Theorem 3.10]{KN16} yields that
$$
s\leq Ca(s)^{\tau-1}.
$$
Taking $v=0$ and using \cref{decay of capacity} we can further write
\begin{align*}
    s&\leq C\cdot \operatorname{Cap}_{\gamma,\omega,m}(U(\epsilon,s))^{\frac{\tau-1}{m}}=C\cdot \operatorname{Cap}_{\gamma,\omega,m}(u<\inf_Xu+s)^{\frac{\tau-1}{m}}\\
    &\leq \frac{C}{|\inf_Xu+s|^{\frac{\tau-1}{m}}}.
\end{align*}
From the above inequality we get easily the uniform estimate of $u$.
\end{proof}

We will also need the following lemma in the proof of \cref{main Hessian equation}.
\begin{lemma}\label{lower bound of c}
    Let $\mu$ be a positive Radon measure $\mu$ satisfying $\mu\leq A\operatorname{Cap}_{\gamma,\omega,m}^\tau$, where $A>0,1<\tau<\frac{n}{n-m}$ are constants. Then there exists a uniform constant $c_0>0$ depending on $A,\gamma,\omega,m,n,\tau$ such that for any $u\in \operatorname{SH}_m(X,\gamma,\omega)\cap L^{\infty} (X)$ and $c>0$ solving
    $$
H_m(u)=c\mu,
    $$
    we have $c\geq c_0$.
\end{lemma}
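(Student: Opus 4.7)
The plan is to argue by contradiction, combining the $L^\infty$ estimate of \cref{hessian a priori estimate} with the weak continuity of Hessian operators \cref{thm: weak convergence lemma}. The intuition is clean: if $c$ could be made arbitrarily small, the $L^\infty$ bound would force $u$ to be uniformly close to $0$, so $H_m(u)$ would be weakly close to the strictly positive volume form $\gamma^m\wedge\omega^{n-m}$, contradicting $H_m(u)=c\mu$ having vanishingly small total mass as $c\to 0$.

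I would first normalize $\sup_X u=0$, which is harmless since adding a constant does not affect $H_m(u)$. Then $H_m(u)=c\mu\leq cA\operatorname{Cap}_{\gamma,\omega,m}^\tau$, and \cref{hessian a priori estimate} applied with ``$A$'' replaced by $cA$ provides a quantitative bound $|\inf_X u|\leq F(cA)$ with $F(cA)\to 0$ as $c\to 0^+$. Tracking the dependence through the proof, the factor $(cA)^{1/m}$ in the recursion $t a(s)\leq (C'cA)^{1/m}a(s+t)^\tau$ propagates through the Ko\l odziej-type iteration, and balancing $s$ with $|\inf_X u|/2$ in the subsequent use of \cref{decay of capacity} yields the explicit estimate $F(cA)=K(cA)^{1/(m+\tau-1)}$ with $K=K(n,m,\tau,\gamma,\omega)$. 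This bookkeeping of constants is the main technical point; everything else is formal.

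Now suppose for contradiction that the lemma fails, so there exist $c_j\to 0^+$ and $u_j\in\operatorname{SH}_m(X,\gamma,\omega)\cap L^\infty(X)$ with $\sup_X u_j=0$ and $H_m(u_j)=c_j\mu$. By the previous step $\|u_j\|_\infty\to 0$, so $u_j\to 0$ uniformly, and in particular in capacity. The uniformly bounded sequence $\{u_j\}$ therefore satisfies the hypotheses of \cref{thm: weak convergence lemma}, yielding the weak convergence of Radon measures
$$
c_j\mu=H_m(u_j)\rightharpoonup H_m(0)=\gamma^m\wedge\omega^{n-m}\quad\text{on } X.
$$
Pairing both sides with the constant function $1\in C^0(X)$ and using $\mu(X)\leq A\operatorname{Cap}_{\gamma,\omega,m}(X)^\tau<\infty$, the left-hand side integrates to $c_j\mu(X)\to 0$, while the right-hand side integrates to the strictly positive number $\int_X\gamma^m\wedge\omega^{n-m}>0$ by strict $(\omega,m)$-positivity of $\gamma$. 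This contradiction establishes the existence of $c_0>0$ depending only on $A,\gamma,\omega,m,n,\tau$ through $K$ and $\operatorname{Cap}_{\gamma,\omega,m}(X)$, as claimed.
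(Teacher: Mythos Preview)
Your argument is correct, but it takes a longer route than the paper's. You track how the constant in \cref{hessian a priori estimate} depends on the domination constant (replacing $A$ by $cA$) to force $\|u\|_\infty\to 0$ as $c\to 0$, and then reach a contradiction via weak continuity of the Hessian operator. The paper instead argues directly: from \cref{capacity level set lemma} one has
\[
t^m\operatorname{Cap}_{\gamma,\omega,m}(U(\epsilon,s))\leq C\int_{U(\epsilon,s+t)}H_m(u)=Cc\,\mu(X)\leq C'c
\]
for fixed small $s,t$, so a lower bound on $c$ reduces to a uniform lower bound on $\operatorname{Cap}_{\gamma,\omega,m}(U(\epsilon,s))$. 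That lower bound is read off immediately from the inequality $s\leq C\operatorname{Cap}_{\gamma,\omega,m}(U(\epsilon,s))^{(\tau-1)/m}$ in \cref{hessian a priori estimate}, applied with the \emph{original} measure $\mu$ and constant $A$: if $c>1$ there is nothing to prove, while if $c\leq 1$ then $H_m(u)=c\mu\leq\mu$, so the hypothesis of \cref{hessian a priori estimate} is satisfied with $A$ itself and the resulting constant is independent of $c$. No bookkeeping through the Ko\l odziej iteration, no contradiction, and no weak convergence are needed. Your approach does yield the additional quantitative information $\|u\|_\infty\lesssim (cA)^{1/(m+\tau-1)}$, and the contradiction step is robust enough to allow $\mu=\mu_j$ to vary (only $\mu_j(X)\leq A\operatorname{Cap}_{\gamma,\omega,m}(X)^\tau$ is used at the end), but neither refinement is required for the lemma as stated.
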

\begin{proof}
    Write $S:=\inf_Xu$ and $U(\epsilon,s):=\{u<S+s\}$, \cref{capacity level set lemma} implies that
    $$
t^m\operatorname{Cap}_{\gamma,\omega,m}(U(\epsilon,s))\leq C\int_{U(\epsilon,s+t)}H_m(u)=Cc\mu(X)\leq C^\prime\cdot c,
    $$
 for each $0<s<\frac{(\lambda\epsilon)^3}{10C_1}$. It suffices to establish a lower bound for $\operatorname{Cap}_{\gamma,\omega,m}(U(\epsilon,s))$, but this follows immediately from \cref{hessian a priori estimate}.
\end{proof}
We now return to the proof of \cref{main Hessian equation}.

\begin{proof}[Proof of \cref{main Hessian equation}]
   Since $\mu$ is absolutely continuous with respect to $H_m(\varphi)$, the Radon-Nikodym theorem implies that there is $0\leq f\in L^1(H_m(\varphi))$ such that $\mu=fH_m(\varphi)$.
   
    \textbf{Step 1.} We first assume that $f$ is bounded and that there exists $\delta>0$ such that $f\geq\delta>0$. In this case, we can invoke \cref{Hessian equation for fHm} to solve
    $$
H_m(u_j)=e^{\frac{u_j}{j}}\mu,
    $$
    for each $j\geq1$ and $u_j\in\operatorname{SH}_m(X,\gamma,\omega)\cap L^{\infty} (X)$. Set $v_j:=u_j-\sup_Xu_j$, then the above equation can be rewritten as 
    \begin{equation}\label{eq v_j}
        H_m(v_j)=c_je^{\frac{v_j}{j}}\mu
    \end{equation}
    where $c_j=e^{\frac{\sup_Xu_j}{j}}$. We first claim that $c_j$ is uniformly bounded away from zero. Indeed, since $f\geq\delta>0$, we can write
    \begin{align*}
        e^{-\frac{u_j}{j}}H_m(u_j)\geq\delta H_m(\varphi)\geq\delta_0e^{-{\frac{\varphi}{j}}}H_m(\varphi)=e^{-\frac{\varphi-j\log\delta_0}{j}}H_m(\varphi-j\log\delta_0).
    \end{align*}
    The domination principle \cref{domination for beta 2} yields that $u_j\leq\varphi-j\log\delta_0$ and hence $\frac{\sup_Xu_j}{j}$ is uniformly bounded from above. Similarly one can show that $\frac{u_j}{j}$ is bounded below. Consequently, we can assume $c_j\rightarrow c>0$ up to extracting a subsequence.

    Since $\sup_Xv_j=0$ and $c_j\rightarrow c$, we have $c_je^{\frac{v_j}{j}}\mu\leq A^\prime \operatorname{Cap}_{\gamma,\omega,m}^\tau$ for some uniform constant $A^\prime$, \cref{hessian a priori estimate} yields that $\{v_j\}_j$ is uniformly bounded. Let $v_j\rightarrow v$ in $L^1(X)$, we claim that $e^{\frac{v_j}{j}}\rightarrow1$ in capacity. In fact, we have
    \begin{align*}
        \operatorname{Cap}_{\gamma,\omega,m}(e^{\frac{v_j}{j}}<1-\epsilon)= \operatorname{Cap}_{\gamma,\omega,m}(v_j<j\log(1-\epsilon))\leq\frac{C}{-j\log(1-\epsilon)}\rightarrow0.
    \end{align*}
    Where in the last inequality we have used \cref{decay of capacity}. Recall that in this case $\mu=fH_m(\varphi)$ with $f$ bounded, it follows from \eqref{eq v_j} and \cref{main weak convergence} that $v_j\rightarrow v$ in capacity. Taking the limit in both sides of \eqref{eq v_j} we arrive at
    $$
H_m(v)=c\mu.
    $$

    \textbf{Step 2.} In this step we assume only $f\geq\delta>0$. Set $f_j:=\min(f,j)$, it follows from \textbf{Step 1} that there exist $u_j\in\operatorname{SH}_m(X,\gamma,\omega)\cap L^{\infty} (X)$ and $c_j>0$ such that
    \begin{equation}
        H_m(u_j)=c_jf_jH_m(\varphi),\quad\sup_Xu_j=0.
    \end{equation}
    Since $f_j$ increases to $f$, the domination principle \cref{domination for beta 3} yields that $c_j$ is decreasing. Moreover, since $f_jH_m(\varphi)\leq fH_m(\varphi)=\mu\leq A\operatorname{Cap}_{\gamma,\omega,m}^\tau$, \cref{lower bound of c} implies that $c_j$ is uniformly bounded below and away from zero. Consequently, we can write $c_j\searrow c>0$. Again using \cref{hessian a priori estimate} it is easy to see that $u_j$ is uniformly bounded. Moreover, if we set $u_j\rightarrow u$ in $L^1(X)$, then
    \begin{align*}
        \int_X|u-u_j|H_m(u_j)\leq c_1\int_X|u-u_j|fH_m(\varphi)=c_1\int_X|u-u_j|d\mu\rightarrow0,
    \end{align*}
    where in the last convergence we have used \cref{lem: convergence}. It follows from \cref{characterzation of convergence in cap} that a subsequence of $u_j$ converges in capacity to $u$, whence our desired equation
    $$
H_m(u)=c\mu.
    $$
    \textbf{Step 3.} We finally remove the assumption $f\geq\delta>0$. Set $g_j:=\max(f,\frac{1}{j})$. We can then apply the previous step to obtain $u_j\in\operatorname{SH}_m(X,\gamma,\omega)\cap L^{\infty} (X)$ and $b_j>0$ such that
    \begin{equation}
        H_m(u_j)=b_jg_jH_m(\varphi),\quad\sup_Xu_j=0.
    \end{equation}
    Since $g_j\searrow g$, the domination principle \cref{domination for beta 3} yields that $b_j$ is increasing. We claim that $b_j$ is uniformly bounded above. Indeed, using the mix type inequality \cref{global mix type} we can write
    $$
\gamma_{u_j}\wedge\gamma_{\varphi}^{m-1}\wedge\omega^{n-m}\geq(b_jg_j)^{\frac{1}{m}}H_m(\varphi).
    $$
    Integrating both sides we have that 
$$
b_j^{\frac{1}{m}}\int_Xg_j^{\frac{1}{m}}H_m(\varphi)\leq\int_X\gamma\wedge\gamma_\varphi^{m-1}\wedge\omega^{n-m}+\int_Xu_jdd^c(\gamma_\varphi^{m-1}\wedge\omega^{n-m}).
$$
Since $g_j\searrow f$ and  $\int_XfH_m(\varphi)>0$, we have that $\int_Xg_j^{\frac{1}{m}}H_m(\varphi)$ is bounded below. For the right-hand-side, an easy computation taking into account \cite[Lemma 2.4]{KN16} shows that there is a uniform constant $C$ such that
$$
\int_Xu_jdd^c(\gamma_\varphi^{m-1}\wedge\omega^{n-m})\leq C\sum_{j=1}^3\int_Xu_j\gamma_{\varphi}^{m-j}\wedge\omega^{n-m+j}\leq C_1.
$$
Where the last inequality is due to \cref{CLN ieq}. We have therefore obtained an upper bound for $b_j$. The a priori estimates \cref{hessian a priori estimate} yields the uniform boundedness of $u_j$ and hence we can proceed the same arguments as in Step 2 to finish the proof.
\end{proof}

\begin{remark}
    The solution in \cref{main Hessian equation} is likely to be continuous, as the Monge-Amp\`ere case in \cite{KN21}, we will not pursue it now since our main tool is the convergence theorem \cref{main weak convergence} and the mixed type inequality \cref{global mix type}. It will be very desirable to remove the assumption "$\mu<<H_m(\varphi)$" and jus to assume that $\mu$ is non-$m$-polar in \cref{main Hessian equation}. In the Monge-Amp\`ere setting \cite{KN21}, this is achieved by approximating $\mu$ with regular measures that maintain the capacity domination condition. However, in the Hessian setting, standard local regularizations (e.g., convolution) do not easily preserve the uniform capacity bound $\mu\leq A\operatorname{Cap}_{\gamma,\omega
    ,m}^\tau$ due to the presence of torsion terms and the lack of global potentials. Establishing such an approximation result or a stability estimate for general non-$m$-polar measures remains an open problem. 
\end{remark}

\end{document}